\numberwithin{equation}{section}
\newcommand{\Krizek}{K\v{r}\'{\i}\v{z}ek}
\newcommand{\og}{\omega}
\DeclareMathOperator{\divg}{div}
\DeclareMathOperator{\Div}{Div}
\DeclareMathOperator{\dev}{dev}
\newtheorem{theorem}{Theorem}[section]
\newtheorem{lemma}[theorem]{Lemma}
\newtheorem{corollary}[theorem]{Corollary}
\newtheorem{proposition}[theorem]{Proposition}
\theoremstyle{definition}
\theoremstyle{remark}
\newtheorem{remark}[theorem]{Remark}
\tikzset{commutative diagrams/.cd,
mysymbol/.style={start anchor=center,end anchor=center,draw=none}
}
\newcommand{\dofcnt}[1]{{\tiny\text{{#1}}}}
\newcommand{\R}{\mathbb{R}}
\newcommand{\V}{\mathbb{V}}
\newcommand{\M}{\mathbb{M}}
\newcommand{\X}{\mathbb{X}}
\newcommand{\SSS}{\mathbb{S}}
\newcommand{\K}{\mathbb{K}}
\newcommand{\om}{{\Omega}}
\newcommand{\ds}{} 
\renewcommand\div{\operatorname{div}}
\newcommand\tr{\operatorname{tr}}
\newcommand\sym{\operatorname{sym}}
\newcommand{\ust}{u^\star}
\newcommand{\uu}{\hat{u}}  
\newcommand{\ssigma}{\hat{\sigma}}   
\newcommand{\tsigma}{\tilde{\sigma}}
\newcommand{\tu}{\tilde{u}}
\newcommand{\EK}{\mathsf{E}}
\newcommand{\dd}{\mathsf{d}}
\newcommand{\BDM}{{B\!D\!M}}
\newcommand{\BDMo}{\mathring{\BDM}}
\newcommand{\curl}{{\ensuremath\mathop{\mathrm{curl}\,}}}
\newcommand{\grad}{{\ensuremath\mathop{\mathrm{ grad}\,}}}
\newcommand{\dive}{{\ensuremath\mathop{\mathrm{div}\,}}} 
\newcommand{\pol}{\EuScript{P}}
\newcommand{\Aa}{\mathcal{A}}
\newcommand{\bn}{n}
\newcommand{\Ta}{T^{\text{A}}}
\newcommand{\Tha}{\mathcal{T}_h^{\text{A}}}
\newcommand{\Th}{\mathcal{T}_h}
\newcommand{\LL}{\mathcal{L}}
\newcommand{\LLo}{\mathring{\mathcal{L}}}
\newcommand{\SW}{\Sigma^w}
\newcommand{\VW}{V^w}
\newcommand{\PPi}{J} 
\newcommand{\rank}{\operatorname{rank}}
\newcommand\skw{\operatorname{skw}}
\newcommand\vskw{\operatorname{vskw}}
\newcommand\mskw{\operatorname{mskw}}
\newcommand{\revj}[1]{{\color{black}#1}}
\newcommand{\RM}{\mathcal{R}}
\title[JKM elasticity element]{The Johnson--K\v{r}\'{i}\v{z}ek--Mercier Elasticity Element in Higher Dimensions}
\author[]{Jay Gopalakrishnan}
\address{
Portland  State University,
PO Box 751 (MTH), 
Portland, OR 97207, USA}
\email{gjay@pdx.edu}
\author[]{Johnny Guzm\'an}
\address{Division of Applied Mathematics,
Brown University,
Box F,
182 George Street,
Providence, RI 02912, USA}
\email{johnny\_guzman@brown.edu}
\author[]{Jeonghun J. Lee}
\address{Department of Mathematics,
Baylor University,
Sid Richardson Science Building,
1410 S.4th Street,
Waco, TX 76706, USA}
\email{jeonghun\_lee@baylor.edu}
\thanks{\revj{This work was supported in part by the National Science Foundation (NSF) under grant DMS-2409900 (Jay Gopalakrishnan), DMS-2309606 (Johnny Guzman), and DMS-2110781 (Jeonghun J. Lee).} The authors gratefully acknowledge the interaction opportunities provided by ICERM during their 2024 program.}
\keywords{stress, Clough-Tocher, Alfeld, HCT, superconvergence, duality, postprocessing}
\begin{document}

\maketitle

\begin{abstract}
  Mixed methods for linear elasticity with strongly symmetric stresses of lowest order are studied in this paper. On each simplex, the stress space has piecewise linear components with respect to its Alfeld split (which connects the vertices to barycenter), generalizing the Johnson--Mercier two-dimensional element to higher dimensions. Further reductions in the stress space in the three-dimensional case (to 24 degrees of freedom per tetrahedron) are possible when the displacement space is reduced to local rigid displacements.  Proofs of  optimal error estimates of numerical solutions and improved error estimates via postprocessing and the duality argument are presented.   
\end{abstract}

\section{Introduction}

The classical Hellinger--Reissner variational principle of linear elasticity yields saddle point problems simultaneously seeking  the stress tensor field and the displacement vector field. A standard finite element discretization of the Hellinger--Reissner formulation then needs two finite element spaces, 
namely a stress space 
$\Sigma_h$ of symmetric matrix-valued piecewise polynomials whose row-wise divergence is square integrable,  
and a displacement space $V_h$ of vector fields with square-integrable components.  
Decades of research have illuminated the 
nontrivial difficulties in finding such a pair of spaces, $\Sigma_h \times V_h$, which also satisfy the Babu\v{s}ka--Brezzi stability conditions of saddle point problems.

To the best of our knowledge, the first such finite element space $\Sigma_h \times V_h$ with rigorous mathematical analysis was presented in \cite{johnson1978some} for the two-dimensional elastostatic problem (and the element itself appeared earlier in~\cite{Watwood1968AnES}). This  finite element on triangular meshes, often called the Johnson--Mercier element, uses a Clough--Tocher  refinement~\cite{clough1965finite,ciarlet2009intrinsic} (also known as
HCT refinement~\cite{Ciarl91})
of a given mesh to define the shape functions of the discrete stress space $\Sigma_h,$ whereas the discrete displacement space $V_h$ is composed of piecewise linear functions defined on the original mesh. More precisely, $\Sigma_h$ is the space of symmetric tensors such that divergence of each of its rows is square-integrable, and each of its components is piecewise linear on the Clough--Tocher refinement. A family of higher order elasticity elements in two dimensions with the Clough--Tocher refinement of triangular meshes was developed in~\cite{arnold1984family}, where the  displacement spaces $V_h$ also have piecewise polynomials on the Clough--Tocher refinement. It is interesting to note that these same composite triangulations have also been critical to develop stable Stokes pairs that are pointwise divergence-free (e.g. \cite{ArnolQin92,Zhang-barycentric:2005}). More recently, the work of \cite{Christiansen-Hu:2018} led to renewed interest as they showed how these elements fit within discrete de Rham complexes and later in the elasticity complex \cite{christiansen2019finite}.

The goal of this work is an investigation of a higher dimensional
version of Johnson--Mercier element on Alfeld splits, the natural
generalization of Clough--Tocher splits to tetrahedra.  At the outset, we
were unaware that an extension of the Johnson--Mercier element to 
three dimensions (3D) was already done in 1982 by \Krizek\ in
\cite{Krizek1982}. This is even before Alfeld's work~\cite{Alfel84}
after which the tetrahedral split was known as the Alfeld split.
A cursory literature search gives the impression that \Krizek's work
is largely unknown at present: it is not mentioned in influential
previous studies
like~\cite{arnold2008finite,hu2015family,hu2016low,gong2023discrete,Stenberg-2023,Huang-2023lowordermixed}.
Nonetheless, in~\cite{Krizek1982}, a proof of unisolvency of the 3D version of
the Johnson--Mercier stress element
and error estimates for stress are
presented.
In this paper, we present a different
proof of unisolvency of the same 3D stress element, 
which we will refer to as the Johnson-\Krizek-Mercier (JKM) element from now on.
We show that our
new technique of proof generalizes to give a stress element in higher
dimensions. Even in 3D, the new analysis helps us develop a
reduced stress element with only 24 degrees of freedom per element,
which to our knowledge, is the simplest conforming symmetric stress
element known currently.  We also prove error estimates for the
displacement, prove superconvergence estimates by duality argument,
show robustness in the incompressible limit, and develop a
postprocessing to enhance convergence rates. We also study the method
obtained by pairing the same stress space with piecewise constant
displacements with respect to the Alfeld refinement (instead of the
piecewise linear displacement spaces that Johnson and Mercier used).
All these elements are illustrated and summarized in Figure~\ref{fig:figdofs}.

Our approach is based on the analysis
of~\cite{arnold1984family,johnson1978some}, rather than that of
\cite{Krizek1982}. The unisolvency proofs
in~\cite{arnold1984family,johnson1978some} of stress finite elements
$\Sigma_h$ proceed by characterizing the divergence-free subspace of
$\Sigma_h$ that have vanishing normal components on the edges of a
triangular element. They show that this subspace must consist of
the Airy stress tensor of $C^1$ splines on the Clough--Tocher split. In the more modern
language of finite element theory, this fits in an exact local discrete
complex of the corresponding spaces with homogeneous boundary conditions.
Note
that $\dive \Sigma_h$ is not a subspace of the displacement space
$V_h$ used in the JKM element, so one would have to
replace $V_h$ with piecewise constant displacements on the
Clough--Tocher triangulation to relate it to the complex. In contrast,
the 3D analysis in \cite{Krizek1982}
utilizes special geometric properties of tetrahedra and inter-element
continuity of polynomials for unisolvency proof, an argument
we could not  generalize to higher dimensions. For our extension of 
Johnson--Mercier element to higher  dimensions, the key, borrowed from
developments in  finite element exterior calculus, is to exploit an
identity used in the BGG resolution~\cite{arnold2007mixed,calabi1961compact, eastwood2000complex}. We give an elementary proof of
this identity in arbitrary dimensions.  Although we have chosen to
present this generalization without using the language of exterior calculus, 
it is  motivated by the considerations to make the local stress
space fit into an exact discrete elasticity complex. Indeed, similar
ideas were critical in \cite{christiansen2020discrete} where the first
3D discrete elasticity complex was developed on Alfeld splits. Since
we only need to focus on the last part of such an $N$-dimensional
complex to obtain the generalization of the Johnson--Mercier element, a
complete study of the full $N$-dimensional complex is outside the
scope of this paper.


Composite conforming stress elements are interesting because they promise
liberation from the necessity of vertex degrees of freedom when using
polynomial spaces on unsplit elements~\cite{arnold2008finite}. Due to complications with supersmoothness, this promise was not realized on Alfeld splits 
in~\cite{christiansen2020discrete}, but was realized on the more involved  Worsey-Farin splits (which splits a tetrahedron to 12 subtetrahedra) later in \cite{gong2023discrete}. Through a new analysis of the lowest order case, in this paper we are finally able to provide a 3D stress element on Alfeld splits without vertex and edge degrees of freedom. Even in higher dimensions, our composite stress element does not have degrees of freedom on subsimplices of codimension greater than one. In particular, this allows for a natural facet-based hybridization. 
Another nice property of our elasticity element is that only low-degree piecewise polynomials are used for the stress finite element. Most symmetric conforming stress  elements for 3D elasticity contain piecewise quadratic or higher degree polynomials for stress (although \cite{gong2023discrete} is an exception). Only some reduced elements have the stress shape functions which do not contain all quadratic polynomials~\cite{arnold2002mixed}, but contains other high polynomial degree shape functions. In this paper, even in  $N$-dimensions,  only piecewise linear polynomials are used for the stress. Finally, we point out that an efficient implementation of the (non-reduced) JKM element appears to be already underway~\cite{Brubeck-Kirby:2025,2025-defelement}.

The paper is organized as follows.  We begin by defining the elasticity problem in Section~\ref{sec:JM-element}. There we present our unisolvency proof of the JKM element. We then analyze the stability and the error estimates of numerical solutions with the JKM element. Optimal error estimates of numerical solutions and superconvergent error estimates of post-processed solutions are proved. In Section~\ref{sec:P0-disp}, another numerical method with the JKM stress element paired with piecewise constant displacement, is proposed and analyzed. We then present  a reduced finite element pair in Section~\ref{sec:reduced-element} and prove its error estimates. The connection between  the JKM element and a linear elasticity finite element method with weakly symmetric stress is discussed in Section~\ref{sec:weak-symmetry}. Generalization of the Johnson--Mercier element to $N$-dimensions ($N\ge 2$) is given in Section~\ref{sec:ndim-case}. Finally, a further reduced finite element pair and its error estimates are discussed in Appendix~\ref{sec:2nd-reduced-element}.


\section{The JKM Element for
  3D Elasticity}
\label{sec:JM-element}

\subsection{Notation}  \label{ssec:notation}
For a measurable set $D \subset \mathbb{R}^N$ $(N \ge 2)$, let $L^2(D)$ denote the Lebesgue space with $N$-dimensional Lebesgue measure. For a domain $D \subset \mathbb{R}^N$ with Lipschitz boundary, $H^k(D)$, $k \ge 1$, denotes the standard Sobolev space \cite{Evans-book} of functions in  $L^2(D)$ all of whose derivatives
of up to $k$th order are also in $L^2(D)$.
Let $\M$ denote the space of $N\times N$
matrices. For $M \in \M$, let 
$  \text{sym } M= \frac{1}{2}(M+M'),$
  $  \text{skw } M= \frac{1}{2}(M-M'),$
and  let $\SSS = \sym \M$ and  $\K = \skw(\M)$.
For   vector spaces $\mathbb{X}$ such as  $ \R, \R^N, \M, \SSS, \K,$ etc.,
we let $L^2(\Omega, \mathbb{X})$ denote the space of $\mathbb{X}$-valued functions whose components are in $L^2(\Omega)$. The inner product of $L^2(D, \mathbb{X})$ and the corresponding norm are denoted $(\cdot, \cdot)_D$ and $\| \cdot \|_{L^2(D, \X)}$, respectively.
Similarly, the $\mathbb{X}$-valued Sobolev spaces and norms are $H^k(D, \mathbb{X})$ and $\| \cdot \|_{H^k(D,\mathbb{X})}$. The $\X$ in the subscripts of  (semi)norm notation will be omitted  if there is no concern of confusion.
Furthermore, if $D$ is  the domain $\Omega$ of  our boundary value problem, then we abbreviate  $(\cdot, \cdot)_\om$ to $(\cdot, \cdot)$. On matrix-valued functions, the divergence, denoted by $\div$, is calculated row wise. Doing so in the sense of distributions, we define 
\begin{align*}
    H(\div,D,\M):=\{ \omega \in L^2(D, \M): \div \omega \in L^2(D, \R^N) \}
\end{align*}
For a set $D$ in $d$-dimensional hyperspace ($d \le N$) and a nonnegative integer $k$, $\mathcal{P}_k(D)$ is the space of polynomials of degree at most $k$, restricted to the domain $D$; similarly, $\mathcal{P}_k(D,\mathbb{X})$ denotes the space of $\mathbb{X}$-valued polynomials of degree $\le k$. 

\subsection{Elasticity equations}
Let $\Omega \subset \mathbb{R}^3$ be a domain with a Lipschitz polyhedral boundary. We denote the spaces of three-dimensional column vectors and the space of $3\times 3$ symmetric matrices by $\mathbb{R}^3$ and $\mathbb{S}$. We consider the linear elasticity equations with homogeneous Dirichlet boundary conditions:
\begin{subequations} \label{strong-eqs}
\begin{alignat}{2}
\Aa \sigma=& \epsilon (u) \qquad && \text{ in } \Omega, \\
\dive \sigma=& f \qquad  && \text{ in } \Omega, \\
u=& 0 \qquad && \text{ on }  \partial \Omega
\end{alignat}
\end{subequations}
where $u:\Omega \rightarrow \mathbb{R}^3$ is the displacement vector field, $\sigma:\Omega \rightarrow \mathbb{S}$ is the stress tensor field,  $f:\Omega \rightarrow \mathbb{R}^3$ is the load vector, and $\epsilon(u)=\sym (\nabla u)$
is the symmetrized gradient. Here $\Aa$ is the compliance tensor, a rank-4 tensor field with entries $\Aa_{ijkl}:\Omega \rightarrow \mathbb{R}$, $1\le i, j, k, l \le 3$ which satisfy symmetry conditions 
\begin{align} \label{A-symmetry}
\Aa_{ijkl} = \Aa_{jikl}=\Aa_{ijlk}=\Aa_{klij}. 
\end{align}
For  $\tau \in \mathbb{S}$ with entries $\tau_{ij}$, $1\le i,j\le 3$, $\Aa(x) \tau$ for $x \in \Omega$ is the matrix whose $(i,j)$-entry is defined by $\sum_{1 \le k, l \le 3} \Aa_{ijkl}(x) \tau_{kl}$. From \eqref{A-symmetry}, one can check that $\Aa(x) \tau$ also belongs to $\mathbb{S}$, and $\Aa(x)$ gives a bilinear form on $\mathbb{S}$ by $\Aa(x) \tau: \omega$ for $\tau, \omega \in \mathbb{S}$ where the colon stands for the Frobenius inner product of matrices. We assume that $\Aa$ is positive definite over symmetric matrices and is bounded, namely
there are positive constants $\theta, \gamma>0$ such that
\begin{alignat}{1}
\Aa(x) \omega : \omega & \ge  \theta \omega: \omega, \quad \omega \in \mathbb{S}, x \in \Omega, \label{A-coercive}\\   
\|\Aa\|_{L^\infty(\Omega)}   &  \le \gamma. \label{A-bounded}
\end{alignat}
In our results for nearly incompressible elastic materials, we make an alternate assumption since \eqref{A-coercive} does not generally hold in such cases: instead of~\eqref{A-coercive}, we then assume that
there exists a  $\theta>0$ such that 
\begin{align} 
\Aa(x) \omega : \omega  & \ge \theta \,\text{dev}(\omega): \text{dev}(\omega), \quad \omega \in \mathbb{S}, x \in \Omega \label{A-incompressible}
\end{align}
where $\text{dev}(\omega) := \omega - \revj{\frac 13} \tr(\omega)\mathbb{I}$ with the $3 \times 3$ identity matrix $\mathbb{I}$. 
For isotropic materials, as the Lame material parameter $\lambda \to \infty$, we can expect \eqref{A-incompressible} to hold, but not \eqref{A-coercive}.

A well-known variational formulation of \eqref{strong-eqs} is as follows: Find $\sigma \in \Sigma$,  $u \in V $ such that
\begin{subequations}\label{Elas}
\begin{alignat}{2}
(\Aa \sigma, \tau)+(u, \dive \tau)=& 0 \qquad && {\text{ for all }}  \tau \in \Sigma, \label{Elas1}\\
(\dive \sigma, v)=& (f,v) \qquad  && {\text{ for all }} v \in V \label{Elas2}
\end{alignat}
\end{subequations}
where $\Sigma=H(\div, \Omega, \mathbb{S}):= \{ \omega \in L^2(D, \mathbb S): \div \omega \in L^2(D, \R^3) \}$, $V:= [L^2(\Omega)]^3$. 

\subsection{The finite element}  \label{ssec:finite-element}
A canonical finite element discretization of \eqref{Elas} is to find $\sigma_h \in \Sigma_h$,  $u_h \in V_h $ such that
\begin{subequations}\label{Elas-disc}
\begin{alignat}{2}
(\Aa \sigma_h, \tau)+(u_h, \dive \tau)=& 0 \qquad && {\text{ for all }} \tau \in \Sigma_h , \label{Elas-disc1}\\
(\dive \sigma_h, v)=& (f,v) \qquad  && {\text{ for all }} v \in V_h  \label{Elas-disc2}
\end{alignat}
\end{subequations}
with suitable finite element spaces $\Sigma_h \subset \Sigma$, $V_h \subset V$. It is well-known that the finite element pair $\Sigma_h, V_h$ needs to satisfy the Babu\v{s}ka--Brezzi stability conditions for existence and uniqueness of numerical solutions and for accurate approximation of exact solutions (see, e.g., \cite{BoffiBrezziFortin-2013}).

To construct a finite element subspace $\Sigma_h$, 
we  work on Alfeld simplicial complexes and start by establishing notation associated to an Alfeld split.
Starting with a tetrahedron $T=[x_0, x_1, x_2,  x_3]$,
let $\Ta$ be an Alfeld triangulation of $T$, i.e.,
we choose an interior point $z$ of $T$ and
we let $T_0=[z, x_1, x_2, x_3]$,  $T_1=[z, x_0, x_2, x_3],$
$T_2=[z, x_0, x_1, x_3],$
$T_3=[z, x_0, x_1, x_2]$
and set $\Ta= \{ T_0, T_1, T_2, T_3\} $.

We let $\Th$ be a conforming triangulation $\Omega$ with tetrahedra and $\Tha$ be the resulting mesh after performing an Alfeld split on each $T \in \Th$. 
For $T \in \Th$ we define the  local spaces as 
\begin{alignat*}{1}
    \Sigma_h(T):=&\{ \omega \in H(\dive, T, \mathbb{S}): \omega|_K \in \pol_1(K, \mathbb{S}), \text{ for all }  K \in \Ta\}, \\
    V_h(T):=&  [\pol_1(T)]^3.
\end{alignat*}
Although  $\divg( \Sigma_h(T))$ is not contained  in $V_h(T)$,
it is contained in 
\begin{alignat*}{1}
W_h(T):=&\{ v \in [L^2(T)]^3: v \in [\pol_0(K)]^3, \text{ for all }  K \in \Ta\}.
\end{alignat*}
Note that $W_h(T)$ and  $V_h(T)$ have the same dimension.  We let 
 $P_T$ denote the $L^2$-orthogonal projection onto $W_h(T)$:
 \begin{alignat}{1}
   \label{eq:PTdef}
  (P_T v, w)_T=(v, w)_T \qquad {\text{ for all }} w \in W_h(T), \; {\text{ for all }} 
  T \in \Th.
\end{alignat}
For every $K \in \Ta$ let $x_K$ be the barycenter of $K$. We let $I_T: W_h(T) \mapsto V_h(T)$ be  defined by  
\begin{equation}\label{IT}
I_T w(x_K)= w(x_K) \qquad \text{ for } K \in \Ta.
\end{equation}
A simple quadrature argument gives that
\begin{equation}\label{ITinner}
(I_T w, m)_T=(w, m)_T \qquad {\text{ for all }}  w,m \in W_h(T),
\end{equation}
which, in particular, shows that $I_T$ is an injection.  
Since $\dim W_h(T) = \dim V_h(T),$
injectivity of $I_T$  implies bijectivity of  $I_T$. 
Moreover, it is easy to see from~\eqref{eq:PTdef} and~\eqref{ITinner} that
\begin{equation}
  \label{eq:PhIh}
  P_T I_T w = w, \qquad \text{ for all } w \in W_h(T),
\end{equation}
so the restriction of $P_T$ to $V_h(T)$, namely $P_T|_{V_h(T)}: V_h(T) \to W_h(T),$ and  $I_T: W_h(T) \to V_h(T)$  are inverses of each other.

The next lemma is crucial for identifying degrees of freedom of the stress element  $\Sigma_h(T)$. First we recall some preliminaries that are standard (see e.g.\ \cite{arnold2008finite}). Let $\M$ denote the space of $3\times 3$
matrices and let $\Xi : \M \to \M$ be defined by 
\begin{equation}
  \label{eq:Xi-defn}
  \Xi M=M' -\text{tr}(M) \mathbb{I},
\end{equation}
where $(\cdot)'$ denotes the transpose. The inverse of this operator is given by 
$\Xi^{-1} M=M'-\frac{1}{2}\text{tr}(M) \mathbb{I}.
$
We define  $\mskw : \mathbb{R}^3 \to \K$ and $\text{vskw}$ by
\begin{align*}
 \text{mskw} \begin{pmatrix}
v_1 \\
v_2 \\
v_3
\end{pmatrix}
 :=
  \begin{pmatrix}
0  &  -v_3 & v_2 \\
v_3 & 0 & -v_1 \\
-v_2& v_1 & 0 
\end{pmatrix}, \qquad \text{vskw}:= \text{mskw}^{-1} \circ \text{skw }.
\end{align*}
\revj{ More explicitly, if $M=(m_{ij})$, $1\le i,j\le 3$, 
\begin{align*}
\text{vskw} M= \frac{1}{2} \begin{pmatrix}
m_{32}-m_{23} \\
 m_{13}-m_{31}\\
m_{21}-m_{12}
\end{pmatrix}.
\end{align*}
} 
It is easy to see that 
\begin{gather}
\label{alg1-1}
  \dive \Xi= 2\text{vskw } \curl
\end{gather}
holds \revj{where $\curl$ is the row-wise $\curl$ for $\M$-valued functions}.
Let $\pol_p(\Ta) = \{ u: u|_K \in \pol_p(K)$ for all
$K \in \Ta\}$ and let $\LLo_p(\Ta) = \pol_p(\Ta) \cap \mathring{H}^1(T)$ \revj{where $\mathring{H}^1(T)$ is the subspace of functions in $H^1(T)$ with vanishing traces on $\partial T$}.
Using this notation, we prove the next lemma showing that the space
of divergence-free elements with vanishing traces of $\Sigma_h(T)$ is
trivial. Our proof of this lemma  marks the main point of departure from the analysis of~\cite{Krizek1982}. An alternative to  the approach of~\cite{Krizek1982}, 
is to prove this lemma using local discrete elasticity sequences
on Alfeld splits obtained by small modifications of the recent 
three-dimensional exact sequences
in~\cite{christiansen2020discrete}. Nonetheless,  we have chosen to present
a new proof that is easier and generalizable to higher dimensions
(as we shall  demonstrate in Section~\ref{sec:ndim-case}).
Throughout,  $n$ denotes the unit outward normal vector field on the boundary of a  domain under consideration.

\begin{lemma}\label{Sigmazero}
  The equality 
  $\{ \omega \in \Sigma_h(T): \dive \omega=0, \omega n|_{\partial T}=0\}=\{0\}$
  holds. 
\end{lemma}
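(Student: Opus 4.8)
The plan is to show that any $\omega \in \Sigma_h(T)$ with $\dive\omega = 0$ in $T$ and $\omega n = 0$ on $\partial T$ must vanish, by exhibiting $\omega$ as an Airy-type potential of a $C^1$-smooth scalar spline on the Alfeld split and then using the vanishing boundary data to force that potential to be trivial. Concretely, since $\dive\omega = 0$ with zero normal trace, a local exact-sequence argument (the last stretch of the discrete elasticity complex on the Alfeld split, or the elementary BGG-type identity advertised in the introduction) lets me write $\omega = \Xi(\curl W)$ for some matrix field $W$ whose rows are piecewise polynomial on $\Ta$; using the operator identity $\dive \Xi = 2\,\vskw\,\curl$ from \eqref{alg1-1} I can arrange $W$ so that it in turn comes from a scalar $C^1$ spline. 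The heart of the matter is then a dimension count / supersmoothness analysis on the Alfeld split showing that a $C^1$ piecewise-polynomial of the relevant low degree on $\Ta$, whose associated stress $\omega$ has vanishing normal trace on $\partial T$, is necessarily a global polynomial, whence $\omega = 0$.

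The steps, in order, would be: \emph{(i)} Reduce to a potential: from $\dive\omega = 0$ and $\omega n|_{\partial T} = 0$, construct a matrix field $W$, piecewise polynomial with respect to $\Ta$, with $\omega = \Xi(\curl W)$; here one checks that the symmetry of $\omega$ (i.e. $\omega \in \pol_1(K,\SSS)$) together with $\dive\Xi = 2\vskw\,\curl$ pins down the skew part of the relevant object and ultimately produces a scalar $C^1$ spline $\varphi \in \LLo_?(\Ta)$ as the true potential. \emph{(ii)} Track the regularity of $\varphi$ across the interior faces of the Alfeld split: the $H(\dive)$-conformity of $\omega$ across those faces translates into $C^1$-continuity of $\varphi$, and the piecewise-linearity of the components of $\omega$ bounds the polynomial degree of $\varphi$. \emph{(iii)} Impose the homogeneous boundary condition $\omega n|_{\partial T} = 0$: this forces $\varphi$ and its normal derivative to vanish (up to a global affine function) on $\partial T$. \emph{(iv)} Conclude by a spline-space dimension argument on the Alfeld split — the space of $C^1$ splines of this degree that are affine on $\partial T$ is exactly the affine functions, so $\varphi$ is affine and $\omega = \Xi(\curl\,\nabla\varphi) = 0$.

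The main obstacle, and the place where the Alfeld-split geometry really enters, is Step \emph{(i)}–\emph{(ii)}: establishing that the divergence-free, normal-trace-free elements of $\Sigma_h(T)$ are exactly the images of $C^1$ scalar splines of the correct degree, rather than some larger space. This is essentially the local exactness of the relevant elasticity subcomplex on $\Ta$, and the subtlety is that the potential a priori lives in a space of matrix fields with $H(\curl)$-type conformity whose dimension must be computed on the split; naive degree counting can overshoot because of the supersmoothness phenomena at the barycenter $z$ of the Alfeld split (the same phenomenon flagged in the introduction as the reason vertex degrees of freedom were unavoidable in \cite{christiansen2020discrete}). Handling the vertex/edge supersmoothness at $z$ correctly — showing it does not leave room for extra nonzero divergence-free stresses once the normal trace is killed — is the crux; I would either invoke the known $C^1$ Alfeld-spline dimension formula of Alfeld, or, in the spirit of the paper, give a direct elementary argument using the identity from the BGG resolution to avoid quoting the full discrete complex.
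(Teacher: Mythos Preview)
Your plan has a genuine gap at the most basic structural level: the reduction to a \emph{scalar} $C^1$ spline potential is a two-dimensional idea (the Airy stress function) that does not survive to three dimensions. In 3D the preimage of divergence-free symmetric tensors under the elasticity complex is not a scalar field but a symmetric-tensor field via the $\inc$ (Beltrami) operator; the identity $\dive\Xi = 2\,\vskw\,\curl$ does let you unwind $\omega = \curl W$ one step, but the symmetry constraint $\vskw\omega=0$ only gives $\dive(\Xi W)=0$, which sends you around the de Rham complex again rather than collapsing to a scalar. So Step~(i)--(ii) cannot terminate in a scalar $\varphi\in\LLo_?(\Ta)$, and the spline-dimension argument in Step~(iv) never gets off the ground. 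Even if you replaced the scalar by the correct tensor potential, you would then be re-proving local exactness of the discrete elasticity complex on Alfeld splits---precisely the route the paper says it is avoiding, and one that entangles you with the supersmoothness issues at the barycenter that you yourself flag as the main obstacle.

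The paper's argument is entirely different and sidesteps potentials altogether. It works on the larger space $\BDMo_1(\Ta)$ (matrix-valued, not symmetric) and considers the map $A:\BDMo_1(\Ta)\to X$, $A\tau=(\vskw\tau,\dive\tau)$, whose kernel is exactly the space in the lemma. Rather than analyzing the kernel directly, the paper proves $A$ is \emph{surjective}: given $(u,v)\in X$ it first picks $\sigma\in\BDMo_1(\Ta)$ with $\dive\sigma=v$ (standard BDM exactness), then solves $\dive\eta = 2(u-\vskw\sigma)$ with $\eta\in\LLo_2(\Ta,\M)$ (the Stokes inf-sup on Alfeld splits), and finally sets $\tau=\sigma+\curl\Xi^{-1}\eta$, which hits $(u,v)$ via~\eqref{alg1-1}. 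A dimension count then finishes: $\dim\BDMo_1(\Ta)=54=\dim X$, so surjectivity forces trivial kernel. The only Alfeld-specific input is the Stokes exact-sequence result, and no $C^1$ spline space or supersmoothness analysis is needed.
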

\begin{proof}
  Consider the matrix analogue of the well-known lowest order Brezzi-Douglas-Marini (BDM) space, namely 
  \begin{align*}
    \BDM_1(\Ta)= \{ \omega \in H(\div, T, \M): \omega|_K \in \pol_1(K, \M) \text{ for all } K \in \Ta\}  
  \end{align*}
  and let $\BDMo_1(\Ta)=\{ \omega \in \BDM_1(\Ta): \omega n=0 \text{ on } \partial T \}$.
Note that 
\begin{align*}
    \{ \omega \in \Sigma_h(T): \dive \omega=0, \omega n|_{\partial T}=0\}=\{ \omega \in \BDMo_1(\Ta) : \; \vskw \omega =0, \; \dive \omega=0\}. 
\end{align*}
Let $\tilde \pol_0(\Ta) = \{ u \in \pol_0(\Ta): \int_T u = 0\}$ and let
  \[
    X = \left\{ (u, v):  u \in [\pol_1(\Ta)]^3, \; v \in [\tilde \pol_0(\Ta)]^3, \;
      \int_T (u \cdot c + v \cdot (c \times x)) = 0 \text{ for all } c \in \R^3
    \right\}.
  \]
  The proof is based on  the operator $A : \BDMo_1(\Ta) \to X$ given by
  \begin{equation}
    \label{eq:Adefn}
    A \eta = (\vskw \eta, \dive \eta), \qquad \eta \in \BDMo_1 \revj{(\Ta)}.
  \end{equation}
  Observe that for any $c \in \R^3$ and any $\eta \in \BDMo_1(\Ta)$, 
  \begin{equation}
    \label{eq:5}
    \int_T c \cdot \vskw \eta = \int_T \eta : \mskw c
    = \int_T \eta : \grad ( c \times x)
    = -\int_T \div \eta \cdot (c \times x).    
  \end{equation}
  Thus $A$ indeed maps into $X$. We proceed to  show that $A$ is surjective.

  Let $(u, v) \in X$. Since $v$ has components of zero
  mean, by a  standard exact sequence property (e.g. \cite{arnold2006finite}), 
  there exists a $\sigma \in \BDMo_1(\Ta)$ such that
  \begin{equation}
    \label{eq:divsig}
    \dive \sigma = v.
  \end{equation}
  Since $(u, v ) \in X$, this implies 
  \[
    0 =
    \int_T (u \cdot c + v \cdot (c \times x))
    = \int_T (u \cdot c   + \div \sigma \cdot (c \times x))
    =
    \int_T (u - \vskw \sigma) \cdot c    
  \]
  for all $c \in \R^3$, 
  where we have again used~\eqref{eq:5} to obtain the last equality.
  Hence by a well-known exact sequence property~\cite{zhang2005stokes, guzman2018inf, fu2018exact} on
  Alfeld splits, there exists an $\eta \in \LLo_2(\Ta, \M)$ such that
  \begin{equation}
    \label{eq:diveta}
    \dive \eta = 2(u - \vskw \sigma).
  \end{equation}
  Now $\tau = \sigma + \curl \Xi^{-1} \eta$ is obviously in $\BDMo_1(\Ta)$
  and satisfies
  \begin{align*}
    \vskw \tau & = \vskw \sigma + \frac 1 2 \dive \eta = u, 
    && \text{(by~\eqref{alg1-1} and~\eqref{eq:diveta})}
    \\
    \dive \tau & = v. && \text{(by~\eqref{eq:divsig})}
  \end{align*}
  Thus we have proved that $A$ is surjective.

  To conclude, the surjectivity of $A$ implies that
  $\rank(A) = \dim X \ge 3 \dim \pol_1(\Ta) + 3\dim \tilde \pol_0(\Ta) - 3 = 54$. Since $\dim \BDMo_1 (\Ta) = 3 \cdot 6 \cdot 3=54$ on an Alfeld
  split (see e.g., \cite[eq.~(2.3)] {christiansen2020discrete}), 
  the rank-nullity theorem implies that the null space of $A$ is trivial. It means that 
  $ \{ \omega \in \BDMo_1 : \; \vskw \omega =0, \; \dive \omega=0\} = \{ 0\}$, so the conclusion follows. 
\end{proof}

\begin{figure}
  \centering
  \includegraphics[width=0.75\textwidth]{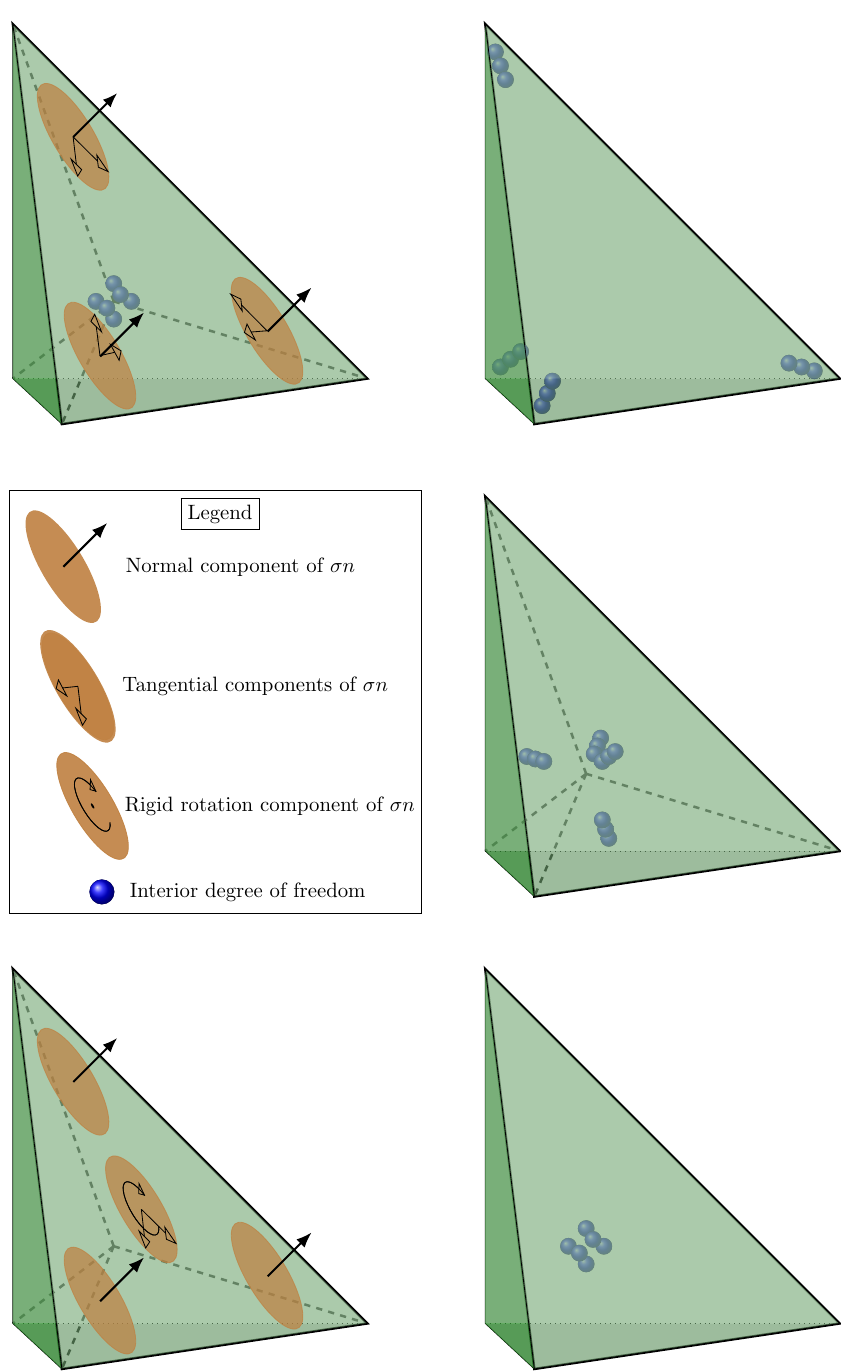}
  \caption{Illustrations of element degrees of freedom, with coupling degrees of freedom shown only on a single foreground face. {\em Top row:} Stress element ($\Sigma_h$) on the split element and displacement element ($V_h$) on the unsplit element for the method of Section~\ref{sec:JM-element}. {\em Middle right:} Split-element  displacements ($W_h$) of Section~\ref{sec:P0-disp} to couple with the same $\Sigma_h$. {\em Bottom row:} The reduced pair of
    Section~\ref{sec:reduced-element}.}
  \label{fig:figdofs}
\end{figure}

We can now state unisolvent degrees of freedom (``dofs'') for the
space $\Sigma_h(T)$. Let $\triangle_k(T)$ denote the set of $k$-subsimplices of a simplex $T$, and let $\triangle_k(\mathcal{T}_h)$ be the union of $\triangle_k(T)$ for all $T \in \mathcal{T}_h$ \revj{(see \cite{arnold2006finite})}. 
\begin{theorem}\label{thmdofs1}
The dimension of $\Sigma_h(T)$ is 42. Moreover, an element $\omega \in  \Sigma_h(T)$  is uniquely determined by the following dofs:
\begin{subequations}
\label{Sigmah}
\begin{alignat}{4}
    &\int_F \omega \bn \cdot \kappa , \qquad
  &&\kappa\in [\pol_1(F)]^3,
     \;F \in \triangle_2(T), \qquad && \dofcnt{(36 dofs)}\label{Sigmah1}
     \\ 
    &\int_T \omega. &&\quad && \dofcnt{($6$ dofs)}\label{Sigmah2}
\end{alignat}
\end{subequations}
\end{theorem}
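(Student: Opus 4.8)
The plan is to establish the claimed dimension $\dim \Sigma_h(T) = 42$ first, and then to show that the $42$ functionals in~\eqref{Sigmah} define an injective map on $\Sigma_h(T)$; unisolvency is then immediate since the domain and codomain have the same dimension. Both halves rest on Lemma~\ref{Sigmazero}.

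I would begin with two elementary observations that make~\eqref{Sigmah} meaningful and give the count $42$: each face $F \in \triangle_2(T)$ is a face of exactly one $K \in \Ta$ (the one opposite the vertex of $T$ not on $F$), so $\omega|_F$ is a single polynomial in $\pol_1(F,\SSS)$ and $\omega n|_F \in [\pol_1(F)]^3$; and $4\cdot 9 + 6 = 42$. For the dimension, consider the linear map
\[
  \Phi : \Sigma_h(T) \longrightarrow \textstyle\Bigl(\prod_{F \in \triangle_2(T)} [\pol_1(F)]^3\Bigr) \times W_h(T), \qquad
  \Phi(\omega) = \bigl((\omega n|_F)_F,\; \dive \omega\bigr).
\]
By Lemma~\ref{Sigmazero}, $\Phi$ is injective. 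Integration by parts (using $\omega : \nab r = \omega : \sym\nab r = 0$ for a rigid field $r(x) = a + b\times x$) shows that the image of $\Phi$ lies in the subspace $Y$ cut out by the six relations $\int_T \dive\omega \cdot r = \int_{\partial T} \omega n \cdot r$ ranging over rigid $r$. A brief geometric computation — the barycenters of $T_0,\dots,T_3$ are the image of $x_0,\dots,x_3$ under the invertible affine map $x \mapsto \frac14(z + \sum_j x_j) - \frac14 x$, hence affinely independent — shows that the $L^2(T)$-pairing between $W_h(T)$ and $\pol_1(T,\R^3)$ is nondegenerate, so those six relations are linearly independent and $\dim Y = 36 + 12 - 6 = 42$.

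The crux is then to prove that $\Phi$ is onto $Y$. Given $((g_F)_F, v) \in Y$, I would first invoke standard properties of the (non-symmetric) space $\BDM_1(\Ta,\M)$ to find $\tau \in \BDM_1(\Ta,\M)$ with $\tau n|_F = g_F$ and $\dive\tau = v$, the sole compatibility needed, $\int_T v = \sum_F \int_F g_F$, being one of the defining relations of $Y$. The remaining relations of $Y$, together with the identity $\int_T \eta : \mskw c = \int_T \eta : \grad(c\times x)$ used in~\eqref{eq:5}, force $\int_T \vskw\tau = 0$, i.e.\ $(\vskw\tau,\,0) \in X$. Then the surjectivity of the operator $A$ from the proof of Lemma~\ref{Sigmazero}, via~\eqref{alg1-1}, supplies $\delta \in \BDMo_1(\Ta)$ with $\vskw\delta = \vskw\tau$ and $\dive\delta = 0$, whence $\omega := \tau - \delta \in \Sigma_h(T)$ satisfies $\Phi(\omega) = ((g_F)_F, v)$. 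This yields $\dim \Sigma_h(T) = \dim Y = 42$.

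Finally, for unisolvency it suffices to show the $42$-functional map $\omega \mapsto \bigl((\int_F \omega n\cdot\kappa)_{F,\kappa}, \int_T\omega\bigr)$ is injective. If all these functionals vanish, then by the first observation $\omega n|_{\partial T} = 0$, and integration by parts gives, for every $\phi \in \pol_1(T,\R^3)$,
\[
  \int_T \dive\omega \cdot \phi = -\int_T \omega : \nab\phi + \int_{\partial T}\omega n\cdot\phi = -\Bigl(\int_T\omega\Bigr):\sym\nab\phi = 0,
\]
since $\nab\phi$ is constant, $\omega$ is symmetric, and $\int_T\omega = 0$. Thus $\dive\omega \perp \pol_1(T,\R^3)$ in $L^2(T)$; by the nondegeneracy above, $\dive\omega = 0$, and Lemma~\ref{Sigmazero} gives $\omega = 0$. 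The main obstacle I anticipate is the surjectivity of $\Phi$ onto $Y$: one must verify carefully that an arbitrary compatible pair (boundary normal trace, divergence) is realized by some $\BDM_1(\Ta,\M)$ field and that the symmetrizing correction stays inside $\BDMo_1(\Ta)$ — this is exactly where~\eqref{alg1-1} and the operator $A$ of Lemma~\ref{Sigmazero} do the real work. Alternatively, citing the dimension of the lowest-order symmetric $H(\dive)$ space on an Alfeld split from the discrete elasticity complexes of~\cite{christiansen2020discrete} would reduce this step to bookkeeping.
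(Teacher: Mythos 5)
Your proposal is correct, and its unisolvency half is in substance the paper's own argument: vanishing face dofs give $\omega n|_{\partial T}=0$ (each boundary face lies in a single $K\in\Ta$, so $\omega n|_F\in[\pol_1(F)]^3$), the mean-value dof kills $\int_T\omega:\epsilon(v)$ for $v\in[\pol_1(T)]^3$, hence $\div\omega\in W_h(T)$ is $L^2(T)$-orthogonal to $[\pol_1(T)]^3$; your nondegeneracy of the pairing between $W_h(T)$ and $[\pol_1(T)]^3$ (affinely independent barycenters plus the midpoint rule) plays exactly the role that the map $I_T$ and the quadrature identity \eqref{ITinner} play in the paper, and Lemma~\ref{Sigmazero} finishes. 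Where you genuinely diverge is the dimension count. The paper gets $\dim\Sigma_h(T)\ge 90-48=42$ in one line by viewing $\Sigma_h(T)$ as the null space of the skew-symmetry constraints $\int_T\omega:\eta=0$, $\eta\in\pol_1(\Ta,\mathbb K)$, on $\BDM_1(\Ta)$, and the injectivity of the $42$ dofs --- which you prove anyway --- then supplies the matching upper bound. You instead compute the dimension exactly by exhibiting $\omega\mapsto(\omega n|_{\partial T},\div\omega)$ as an isomorphism of $\Sigma_h(T)$ onto the $42$-dimensional compatibility space $Y$. That surjectivity step is sound: the lift $\tau\in\BDM_1(\Ta,\M)$ with prescribed boundary normal trace and divergence follows from the same exact-sequence property already invoked in Lemma~\ref{Sigmazero} (prescribe the boundary face dofs, then correct the mean-zero divergence defect inside $\BDMo_1(\Ta)$ --- spell this out), the rotational relations of $Y$ do force $\int_T\vskw\tau=0$ so that $(\vskw\tau,0)\in X$, and the correction $\delta$ comes from the surjectivity of the operator $A$; note, however, that this surjectivity is established inside the \emph{proof} of Lemma~\ref{Sigmazero} rather than in its statement, so you are reusing an internal fact. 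The trade-off: your route is longer and, strictly speaking, redundant once the cheap lower bound is observed, but it buys a stronger statement the paper does not record --- every pair of piecewise-linear boundary normal trace and piecewise-constant divergence satisfying the six rigid-motion compatibility relations is realized by some $\omega\in\Sigma_h(T)$.
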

\begin{proof}
Since $\Sigma_h(T)=\{ \omega \in \BDM_1(\Ta): \int_T \omega : \eta = 0 \; \text{ for all } \eta \in \pol_1(\Ta, \mathbb{K})\}$, 
  \begin{align*}
    \dim \Sigma_h(T) &\ge \dim \BDM_1(\Ta) - \dim \pol_1(\Ta, \mathbb{K})
    \\
    & = 90-48=42,
  \end{align*}
which is exactly the number of dofs given in \eqref{Sigmah}. We now show that $\dim \Sigma_h(T) \le 42$.

Let $\omega \in  \Sigma_h(T)$  for which  all dofs of \eqref{Sigmah} vanish. We will  show that $\omega=0$. The dofs \eqref{Sigmah1} give that $\omega \bn=0$ on $\partial T$. Let $v \in \pol_1(T)$. Then integration by parts gives
\begin{equation}
  \label{eq:1-div-om}
\int_{T} \divg \omega   \cdot v = -\int_{T} \omega : \epsilon(v) =0
\end{equation}
where we used \eqref{Sigmah2} in the last equality. Note that $\divg \omega \in W_h(T)$. Hence, substituting  $v=I_T \divg \omega$ in~\eqref{eq:1-div-om} and using~\eqref{ITinner} we get  $0= \int_{T} v \, \divg \omega =\int_{T} |\divg \omega|^2$ which shows that $\divg \omega =0$.
By Lemma~\ref{Sigmazero}, $\omega$ vanishes.
This proves that $\dim \Sigma_h(T) \le 42$, so $\dim \Sigma_h(T) = 42.$ The argument also shows that the dofs~\eqref{Sigmah} uniquely determine an element $\omega \in \Sigma_h(T)$.
\end{proof}

Unisolvent dofs of a finite element generate an associated canonical
interpolant.  Let $\Pi_T$ denote this canonical projection into
$\Sigma_h(T)$. It satisfies
\begin{subequations}
\label{Pi}
\begin{alignat}{4}
    &\int_F \Pi_T \omega \bn \,\cdot  \kappa = \int_F  \omega \bn \cdot \kappa , \qquad &&\kappa\in [\pol_1(F)]^3, \; F \in \triangle_2(T), \label{Pi1}
    \\
    &\int_T \Pi_T\omega = \int_T \omega  \label{Pi2}
\end{alignat}
\end{subequations}
for any $\omega$ in
$D_{\Pi_T}:=\{ \omega \in H(\divg, T, \mathbb{S}): \;
\omega \bn|_F \in [L^2(F)]^3, \text{ for all } F \in \triangle_2(T) \}$. 
It follows that
\begin{equation}\label{commutePi}
  \int_T \divg ( \Pi_T \omega -\omega) \cdot v =0
  \quad {\text{ for all }} v\in V_h(T),\; \omega \in D_{\Pi_T}
\end{equation}
by integrating by parts and \eqref{Pi}.

The global discrete spaces are given by
\begin{alignat}{1}
    \Sigma_h:=&\{ \omega \in H(\divg, \Omega, \mathbb{S}): \omega|_T \in  \Sigma_h(T), \text{ for all } T \in \Th\}, \\
    V_h:=&  \{ v \in [L^2(\Omega)]^3: v|_T \in V_h(T),  \text{ for all } T \in \Th\}, \\
    W_h:=&   \{ w \in [L^2(\Omega)]^3: w|_T \in W_h(T),  \text{ for all } T \in \Th\}.
\end{alignat}
We also define the global interpolant $\Pi: D_{\Pi} \mapsto \Sigma_h$
by $\Pi \omega|_T = \Pi_T \omega$ for all $T \in \Th$ where
\begin{align}
  \label{eq:DPi-defn}
    D_{\Pi}:=\{ \omega \in H(\divg, \Omega, \mathbb{S}): \omega \bn|_F
\in [L^2(F)]^3, {\text{ for all }} F \in \triangle_2(\Th) \}.     
\end{align}
Note that
$H^1(\Omega, \mathbb{S}) \subset D_{\Pi}$. Finally, we need
the global versions of $P_T$ and $I_T$: the projection $P$ onto $W_h$
and $I: W_h \mapsto V_h$ are defined by $P v|_T=P_T v$ and
$I w|_T=I_T w$ for all $T \in \Th$.
The next result is an easy consequence of~\eqref{commutePi}.

\begin{proposition}
  For all $v \in V_h$ and $\og \in D_\Pi$, 
\begin{equation}\label{commutePiglobal}
  ( \divg(\Pi \omega -\omega),  v) =0.
\end{equation}
Moreover, for $s=1,2$,  for all $\og \in H^s(\om, \mathbb S)$, 
\begin{alignat}{2} \label{Pibound}
  \| \omega-\Pi \omega\|_{L^2(\Omega)}+ h \|\divg(\omega-\Pi \omega)\|_{L^2(\Omega)}
  & \le  C h^s \|\omega\|_{H^s(\Omega)}.
\end{alignat}
\end{proposition}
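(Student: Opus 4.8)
The plan is to prove the commuting property \eqref{commutePiglobal} first, then the approximation bound \eqref{Pibound}, working locally on each $T\in\Th$ and summing. For the commuting property, fix $v\in V_h$ and $\og\in D_\Pi$. Since $v|_T\in V_h(T)$ and $\Pi\og|_T=\Pi_T\og$ with $\og|_T\in D_{\Pi_T}$ (because $\og\bn|_F\in[\pol_1(F)]^3\subset[L^2(F)]^3$ for every $F\in\triangle_2(T)$, using $\og\in H(\divg,\Omega,\SSS)$), identity \eqref{commutePi} gives $\int_T\divg(\Pi_T\og-\og)\cdot v=0$ for every $T$. Summing over $T\in\Th$ yields \eqref{commutePiglobal}. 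I note that one must verify $H^s(\om,\SSS)\subset D_\Pi$ for $s=1,2$ so the interpolant is defined in the next part; this follows from the trace theorem applied rowwise, as already remarked after \eqref{eq:DPi-defn}.

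For the approximation estimate, I would argue by the standard Bramble--Hilbert / scaling machinery. On the reference tetrahedron $\hat T$ with its fixed Alfeld split, the canonical projection $\Pi_{\hat T}$ reproduces $\pol_1(\hat T,\SSS)$ (since $\pol_1(\hat T,\SSS)\subset\Sigma_h(\hat T)$ and the dofs \eqref{Sigmah1}--\eqref{Sigmah2} are unisolvent), and $\Pi_{\hat T}$ is bounded on $H^s(\hat T,\SSS)$ for $s=1,2$ because the dofs are bounded linear functionals on $H^1(\hat T,\SSS)$ — the facet dofs \eqref{Sigmah1} are controlled via the trace theorem and the volume dofs \eqref{Sigmah2} trivially. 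Hence by Bramble--Hilbert, $\|\og-\Pi_{\hat T}\og\|_{L^2(\hat T)}\le C\|\og\|_{H^s(\hat T)}$ for $1\le s\le 2$. A Piola transform (or plain affine pullback, since $\SSS$-valued $L^2$ and $H(\divg)$ norms scale by powers of the diameter under affine maps) transports this to a shape-regular element $T$ of diameter $h_T$, producing $\|\og-\Pi_T\og\|_{L^2(T)}\le Ch_T^s\|\og\|_{H^s(T)}$; the $\divg$ term scales with one extra negative power of $h_T$, giving $\|\divg(\og-\Pi_T\og)\|_{L^2(T)}\le Ch_T^{s-1}|\og|_{H^s(T)}$ via the same argument applied after noting $\divg$ commutes suitably with the pullback. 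Squaring, summing over $T\in\Th$, and bounding $h_T\le h$ gives \eqref{Pibound}.

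The main obstacle — really the only nonroutine point — is verifying that $\Pi_{\hat T}$ is a \emph{bounded} operator on $H^1(\hat T,\SSS)$, i.e.\ that the facet dofs in \eqref{Sigmah1} extend continuously from $H(\divg,\hat T,\SSS)$ (where $\og\bn|_F$ lives only in $H^{-1/2}(\partial\hat T)$) to $H^1$. This is handled by the trace inequality $\|\og\bn\|_{L^2(F)}\le C\|\og\|_{H^1(\hat T)}$, valid on the fixed reference configuration, so the estimate is genuinely an $H^1$ (not $H(\divg)$) estimate; for $s=2$ one simply uses the continuous inclusion $H^2\hookrightarrow H^1$. The commuting relation \eqref{commutePi} plus the reproduction property are what make the final $\divg$ estimate optimal rather than merely $O(1)$. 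Everything else is the standard scaling argument, so I would not belabor it.
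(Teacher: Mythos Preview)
Your proposal is correct and follows exactly the approach the paper indicates: the paper simply remarks that the proposition ``is an easy consequence of~\eqref{commutePi},'' and your argument spells out precisely that---the local commuting identity summed over elements for~\eqref{commutePiglobal}, and standard polynomial-preservation plus Bramble--Hilbert scaling for~\eqref{Pibound}. One small slip: you write ``$\og\bn|_F\in[\pol_1(F)]^3$,'' but the trace of a general $\og\in D_\Pi$ is only in $[L^2(F)]^3$ (which is all that is required for $\og|_T\in D_{\Pi_T}$); this does not affect the argument.
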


The next result follows easily by combining a well-known technique of
proving wellposedness~\cite{Falk08} with a more recent result on
regular potentials~\cite[Corollary~4.7]{costabel2010bogovskiui} that show that 
given any $z \in H^\ell(\om)$, for any real $\ell$, there exists a
$u \in [H^{\ell+1}(\om)]^3$ such that
\begin{equation}
  \label{eq:CM}
  \divg u = z, \qquad \| u \|_{H^{\ell+1}(\om)} \le C \| z \|_{H^\ell(\om)}.
\end{equation}
Hereon, $C$ will denote a generic mesh-independent constant whose
value at different occurrences may vary.

\begin{lemma}
  \label{lem:inf-sup}
  Given any $v \in [L^2(\om)]^3$, there exists a $\tau \in H^1(\om, \SSS)$ such that
  \begin{equation}
    \label{eq:div-S-rgt-inv}
    \divg \tau
    = v, \qquad 
  \|\tau\|_{H^1(\Omega, \SSS)} \le C \| v\|_{L^2(\Omega)}.    
  \end{equation}
\end{lemma}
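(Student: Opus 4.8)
The plan is to follow the classical recipe for the Hellinger--Reissner inf-sup condition (cf.\ \cite{Falk08}): first construct a right inverse of $\divg$ into $H^1(\om,\M)$ \emph{without} worrying about symmetry, and then subtract a divergence-free tensor that cancels the skew-symmetric part, using the regular potential estimate \eqref{eq:CM} to guarantee that every object produced lands in the Sobolev space it should. The algebraic identities around $\Xi$, $\mskw$, and $\vskw$ recalled just before Lemma~\ref{Sigmazero} are exactly what makes the symmetry correction work.

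Concretely, I would proceed in three steps. \textbf{(i)} Write $v=(v_1,v_2,v_3)$ and apply \eqref{eq:CM} with $\ell=0$ to each scalar component $v_i\in L^2(\om)$ to obtain $p^{(i)}\in[H^1(\om)]^3$ with $\divg p^{(i)}=v_i$ and $\|p^{(i)}\|_{H^1(\om)}\le C\|v_i\|_{L^2(\om)}$; let $M\in H^1(\om,\M)$ be the matrix field whose $i$th row is $(p^{(i)})'$, so that $\divg M=v$ row-wise and $\|M\|_{H^1(\om)}\le C\|v\|_{L^2(\om)}$. \textbf{(ii)} Set $g:=\vskw M\in[H^1(\om)]^3$, which satisfies $\skw M=\mskw(g)$ and $\|g\|_{H^1(\om)}\le C\|v\|_{L^2(\om)}$ since $\vskw$ is a bounded algebraic operator; apply \eqref{eq:CM} with $\ell=1$ to each component $g_i$ to obtain $\eta^{(i)}\in[H^2(\om)]^3$ with $\divg\eta^{(i)}=-2g_i$, and assemble these rows into $\eta\in H^2(\om,\M)$, so that $\divg\eta=-2g$ and $\|\eta\|_{H^2(\om)}\le C\|v\|_{L^2(\om)}$. \textbf{(iii)} Define $\tau:=M+\curl\Xi^{-1}\eta$.

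It then remains to check the three required properties. Membership $\tau\in H^1(\om,\M)$ and the bound $\|\tau\|_{H^1(\om,\SSS)}\le C\|v\|_{L^2(\om)}$ follow because $\eta\in H^2$ makes $\curl\Xi^{-1}\eta\in H^1$, together with the estimates from steps (i)--(ii). The divergence identity $\divg\tau=\divg M=v$ holds because $\divg\curl$ vanishes row-wise. Finally, using $\skw=\mskw\circ\vskw$ and the consequence $\vskw\curl\Xi^{-1}\eta=\tfrac12\divg\eta$ of \eqref{alg1-1},
\[
  \skw\tau=\mskw(g)+\mskw\!\big(\tfrac12\divg\eta\big)=\mskw(g)-\mskw(g)=0,
\]
so $\tau\in H^1(\om,\SSS)$, as desired.

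The one genuinely delicate point — and the reason the regular potential result \eqref{eq:CM} is invoked rather than a bare right inverse of $\divg$ on $H(\divg)$ — is the interplay in steps (ii)--(iii): the skew-symmetry correction must be simultaneously divergence-free \emph{and} $H^1$-regular, which forces $\eta$ to be one Sobolev order smoother than $g$, hence the second application of \eqref{eq:CM} at level $\ell=1$. Everything else is routine bookkeeping with bounded linear (algebraic or differential) operators.
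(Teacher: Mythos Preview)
Your proof is correct and is essentially identical to the paper's own argument: both construct an $H^1$ matrix field $M$ (the paper's $\tau_1$) with $\divg M=v$ via \eqref{eq:CM} at level $\ell=0$, then use \eqref{eq:CM} at level $\ell=1$ to build an $H^2$ potential whose curl corrects the skew part via the identity~\eqref{alg1-1}. The only difference is cosmetic: the paper names the $H^2$ field $\rho$ and writes the correction as $\curl\rho$ with $\divg(\Xi\rho)=-2\vskw\tau_1$, whereas you name $\eta=\Xi\rho$ and write $\curl\Xi^{-1}\eta$ with $\divg\eta=-2g$; these are the same object.
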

\begin{proof}
  Using~\eqref{eq:CM} with $\ell=0$ for each
  component of $v$, we find a $\tau_1 \in H^1(\om, \M)$ such that
  \begin{equation}
    \label{eq:CM-1}
    \divg \tau_1 = v, \qquad \| \tau_1 \|_{H^1(\om, \M)} \le C \| v \|_{L^2(\om)}.
  \end{equation}
  Next, we apply~\eqref{eq:CM} with $\ell=1$ and $z=- 2 \vskw \tau_1$
  to find a $\rho$ such that $\divg (\Xi \rho) = - 2 \vskw
  \tau_1$. Since $\vskw \tau_1 \in [H^1(\om)]^3$, and $\Xi$ is a
  pointwise algebraic bijection, the resulting $\rho$ is in
  $H^2(\om, \M)$ and
  \begin{equation}
    \label{eq:CM-2}
    \| \rho \|_{H^2(\om)} \le C \| \tau_1 \|_{H^1(\om)}.
  \end{equation}
  Then putting
  $\tau = \tau_1 + \curl \rho$ we find, using \eqref{alg1-1}, that
  $ \divg \tau = v $ and
  \[
    2\vskw \tau =
    2\vskw \tau_1 + 2\vskw (\curl \rho) = 2\vskw \tau_1 + \divg (\Xi \rho) = 0. 
  \]
  Combining~\eqref{eq:CM-1} and~\eqref{eq:CM-2}, we conclude that
  $\tau = \tau_1 + \curl \rho$ is in
  $H^1(\Omega, \mathbb{S})$, satisfies $\divg \tau = v$, as well as 
  the estimate in~\eqref{eq:div-S-rgt-inv}.
\end{proof}

\begin{theorem}[Discrete stability]\label{thm:inf-sup-JM}
There exists a constant $\beta>0$ such that 
\begin{equation}
   \beta \| v \|_{L^2(\om)} \le \sup_{0 \neq \omega \in \Sigma_h}  \frac{( \divg \omega, v)}{\|\omega\|_{H(\divg,\Omega)} } \qquad {\text{ for all }} v \in V_h. 
\end{equation}
\end{theorem}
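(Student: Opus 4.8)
The plan is the classical Fortin-operator argument: for each $v \in V_h$ I will exhibit one particular $\omega \in \Sigma_h$ for which $(\divg\omega, v)$ equals exactly $\|v\|_{L^2(\om)}^2$ while $\|\omega\|_{H(\divg,\om)} \le C\|v\|_{L^2(\om)}$, and then read off $\beta = 1/C$. All the genuinely nontrivial ingredients — a regular right inverse of the divergence landing in \emph{symmetric} tensors, and a bounded commuting interpolant onto $\Sigma_h$ — are already in hand from Lemma~\ref{lem:inf-sup} and the preceding Proposition, so the argument is short.

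Concretely, given $v \in V_h \subset [L^2(\om)]^3$, I first apply Lemma~\ref{lem:inf-sup} to obtain $\tau \in H^1(\om,\SSS)$ with $\divg\tau = v$ and $\|\tau\|_{H^1(\om,\SSS)} \le C\|v\|_{L^2(\om)}$. Since $H^1(\om,\SSS) \subset D_{\Pi}$, the interpolant $\omega := \Pi\tau$ is a well-defined element of $\Sigma_h$. By the commuting identity \eqref{commutePiglobal} (valid precisely because $v \in V_h$), $(\divg(\Pi\tau - \tau), v) = 0$, so $(\divg\omega, v) = (\divg\tau, v) = (v,v) = \|v\|_{L^2(\om)}^2$. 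For the denominator, the approximation estimate \eqref{Pibound} with $s=1$ gives, after absorbing the mesh size $h \le \operatorname{diam}(\om)$ into the constant, $\|\tau - \Pi\tau\|_{L^2(\om)} + \|\divg(\tau - \Pi\tau)\|_{L^2(\om)} \le C\|\tau\|_{H^1(\om)}$, whence a triangle inequality yields $\|\omega\|_{H(\divg,\om)} \le \|\tau\|_{H(\divg,\om)} + C\|\tau\|_{H^1(\om)} \le C\|v\|_{L^2(\om)}$. Combining the two bounds,
\[
  \sup_{0 \ne \eta \in \Sigma_h} \frac{(\divg\eta, v)}{\|\eta\|_{H(\divg,\om)}}
  \;\ge\; \frac{(\divg\omega, v)}{\|\omega\|_{H(\divg,\om)}}
  \;\ge\; \frac{\|v\|_{L^2(\om)}^2}{C\|v\|_{L^2(\om)}}
  \;=\; \frac1C\,\|v\|_{L^2(\om)},
\]
which is the stated inequality with $\beta = 1/C$.

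I do not expect a real obstacle; the only points to keep straight are bookkeeping ones: that $\tau$ lies in the interpolation domain $D_{\Pi}$ (immediate from $H^1(\om,\SSS) \subset D_{\Pi}$); that testing $\divg\omega$ against $v \in V_h$ is legitimate even though $\divg\Sigma_h \subset W_h$ rather than $V_h$, which is harmless since only the $L^2$ pairing is used and \eqref{commutePiglobal} is itself stated for $v \in V_h$; and that the factor $h$ appearing in \eqref{Pibound} is bounded by $\operatorname{diam}(\om)$, so it disappears into the generic constant. If anything needed extra care it would be confirming that the right inverse from Lemma~\ref{lem:inf-sup} is \emph{symmetric}-tensor valued (so that $\omega = \Pi\tau$ actually makes sense as an element of $\Sigma_h$), but that is exactly what Lemma~\ref{lem:inf-sup} provides.
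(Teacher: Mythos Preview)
Your proof is correct and follows essentially the same approach as the paper's own proof: invoke Lemma~\ref{lem:inf-sup} to obtain a symmetric $\tau\in H^1(\Omega,\SSS)$ with $\divg\tau=v$, set $\omega=\Pi\tau$, use the commuting identity~\eqref{commutePiglobal} to get $(\divg\omega,v)=\|v\|_{L^2(\Omega)}^2$, and bound $\|\omega\|_{H(\divg,\Omega)}$ via~\eqref{Pibound}. Your write-up is in fact slightly more explicit about the bookkeeping (absorbing the factor $h$, the triangle inequality for the $H(\divg)$ norm) than the paper's version.
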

\begin{proof}
  Given a $v\in V_h$, we apply Lemma~\ref{lem:inf-sup} to get  a $\tau$ in
  $H^1(\om, \mathbb S)$ satisfying~\eqref{eq:div-S-rgt-inv}.  Since
  $H^1(\om, \mathbb S) \subset D_\Pi$, the function 
  $\omega= \Pi \tau$ exists in $\Sigma_h.$ Then we have
\begin{equation}
\|v\|_{L^2(\Omega)}^2= (\divg \tau, v)= (\divg \omega, v) 
\end{equation}
by \eqref{commutePiglobal}. The result now follows after using \eqref{Pibound} with $s=1$ 
 and~\eqref{eq:div-S-rgt-inv} which gives $\|\omega\|_{H(\divg, \Omega)} \le C \|\tau\|_{H^1(\Omega)} \le C \| v\|_{L^2(\Omega)}$. 
\end{proof}

{\bf The mixed method with JKM element} finds 
$\sigma_h \in \Sigma_h$ and $u_h \in V_h$ satisfying
\begin{subequations}\label{FEM}
\begin{alignat}{2}
(\Aa \sigma_h, \tau)+(u_h, \divg \tau)=& 0 \qquad && {\text{ for all }} \tau \in \Sigma_h, \label{FEM1}\\
(\divg \sigma_h, v)=& (f,v) \qquad  && {\text{ for all }} v \in V_h. \label{FEM2}
\end{alignat}
\end{subequations}

\subsection{Error analysis}

We prove error estimates for~\eqref{FEM}.
Let $\| \tau \|_{\Aa} = (\Aa \tau, \tau)^{1/2}$.

\begin{theorem}\label{mainerrorthm}
  Let $\sigma, u$ solve \eqref{Elas}
  and $\sigma_h, u_h$ solve \eqref{FEM}.
  Then, we have 
\begin{equation}\label{error:sigma}
\|\Pi \sigma-\sigma_h\|_\Aa \le \| \Pi \sigma-\sigma\|_\Aa
\end{equation}
and
\begin{equation}\label{error:u}
  \|u-u_h\|_{L^2(\Omega)} \le C \big(\| \sigma-\sigma_h\|_\Aa
  + \|Q u-u\|_{L^2(\Omega)}\big)
\end{equation}
where  $Q$ is the $L^2$-orthogonal projection onto $V_h$.
\end{theorem}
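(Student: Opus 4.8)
The plan is to run the classical mixed-method error argument, with one extra step forced by the fact that $\divg \Sigma_h$ lies in $W_h$ rather than in $V_h$. Since the discretization is conforming ($\Sigma_h \subset \Sigma$ and $V_h \subset V$), the exact solution satisfies \eqref{Elas} in particular against all discrete test functions, so subtracting \eqref{FEM} gives the error equations $(\Aa(\sigma - \sigma_h), \tau) + (u - u_h, \divg \tau) = 0$ for all $\tau \in \Sigma_h$ and $(\divg(\sigma - \sigma_h), v) = 0$ for all $v \in V_h$.

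To prove \eqref{error:sigma}, I would first show $\divg(\Pi \sigma - \sigma_h) = 0$. Combining the second error equation with the commuting identity \eqref{commutePiglobal} yields $(\divg(\Pi \sigma - \sigma_h), v) = 0$ for all $v \in V_h$. Since $\Pi \sigma$ and $\sigma_h$ both lie in $\Sigma_h$, the function $w := \divg(\Pi \sigma - \sigma_h)$ belongs to $W_h$; taking $v = I w \in V_h$ and using the quadrature identity \eqref{ITinner} with $m = w$ gives $0 = (w, I w) = (w, w)$, hence $w = 0$. With this in hand, testing the first error equation with $\tau = \Pi \sigma - \sigma_h$ annihilates the displacement term and leaves $(\Aa(\sigma - \sigma_h), \Pi \sigma - \sigma_h) = 0$. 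Writing $\sigma - \sigma_h = (\sigma - \Pi \sigma) + (\Pi \sigma - \sigma_h)$ and applying the Cauchy--Schwarz inequality for the inner product $(\Aa \,\cdot\,, \cdot)$ then gives $\|\Pi \sigma - \sigma_h\|_\Aa^2 \le \|\Pi \sigma - \sigma\|_\Aa \, \|\Pi \sigma - \sigma_h\|_\Aa$, which is \eqref{error:sigma}.

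To prove \eqref{error:u}, I would split $u - u_h = (u - Q u) + (Q u - u_h)$ and estimate the second piece by the discrete stability of Theorem~\ref{thm:inf-sup-JM}. For any $0 \neq \omega \in \Sigma_h$, decompose $(\divg \omega, Q u - u_h) = (\divg \omega, Q u - u) + (\divg \omega, u - u_h)$. The first term is bounded by $\|\divg \omega\|_{L^2(\Omega)} \|Q u - u\|_{L^2(\Omega)} \le \|\omega\|_{H(\divg,\Omega)} \|Q u - u\|_{L^2(\Omega)}$, and the second equals $-(\Aa(\sigma - \sigma_h), \omega)$ by the first error equation, which is bounded by $C \|\sigma - \sigma_h\|_\Aa \|\omega\|_{H(\divg,\Omega)}$ using Cauchy--Schwarz in $(\Aa\,\cdot\,,\cdot)$ together with the boundedness \eqref{A-bounded} of $\Aa$. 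Dividing by $\|\omega\|_{H(\divg,\Omega)}$, taking the supremum, and invoking Theorem~\ref{thm:inf-sup-JM} gives $\beta \|Q u - u_h\|_{L^2(\Omega)} \le \|Q u - u\|_{L^2(\Omega)} + C \|\sigma - \sigma_h\|_\Aa$; the triangle inequality then yields \eqref{error:u}.

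The only non-routine point is establishing $\divg(\Pi \sigma - \sigma_h) = 0$: unlike the textbook Raviart--Thomas or BDM situation, here one does not have $\divg \Sigma_h \subset V_h$, so the usual ``test with the divergence of the error'' step must be routed through the bijection $I : W_h \to V_h$ and the identity \eqref{ITinner}. Everything else follows the classical pattern.
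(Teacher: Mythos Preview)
Your argument is correct. The proof of \eqref{error:sigma} is essentially identical to the paper's: the paper phrases the key step as ``$P:V_h\to W_h$ is an isomorphism, hence $\divg(\Pi\sigma-\sigma_h)=0$,'' while you test against $v=Iw$ and invoke \eqref{ITinner}; since $I$ and $P|_{V_h}$ are mutual inverses (see \eqref{eq:PhIh}), this is the same idea viewed from the other direction.

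For \eqref{error:u} you take a genuinely different route. The paper does not invoke the discrete inf-sup condition; instead it constructs an explicit test function by applying Lemma~\ref{lem:inf-sup} to $Qu-u_h$ to obtain $\tau\in H^1(\Omega,\SSS)$ with $\divg\tau=Qu-u_h$, then splits $\divg\tau=\divg\Pi\tau+\divg(\tau-\Pi\tau)$ and uses the error equation on the first piece and \eqref{commutePiglobal} on the second. Your approach via Theorem~\ref{thm:inf-sup-JM} is shorter and avoids reintroducing the continuous right-inverse and the interpolant $\Pi$; the paper's approach, on the other hand, keeps the proof self-contained relative to Lemma~\ref{lem:inf-sup} (which is also what underlies Theorem~\ref{thm:inf-sup-JM}) and parallels the structure later reused in the duality and superconvergence arguments. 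Both are standard in the mixed-methods literature and yield the same estimate.
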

\begin{proof}

Using \eqref{commutePiglobal}, \eqref{FEM2} and \eqref{Elas2} we get that 
\begin{equation*}
(\divg (\Pi \sigma-\sigma_h), v)=0  \quad {\text{ for all }} v \in V_h.
\end{equation*}
However, $(\divg (\Pi \sigma-\sigma_h), P v)= (\divg (\Pi \sigma-\sigma_h), v)$. Since $P: V_h \rightarrow W_h$ is an isomorphism, this implies that
\begin{equation}\label{divsig}
\divg (\Pi \sigma-\sigma_h)=0.
\end{equation}
Then, \eqref{error:sigma} follows from \eqref{FEM1} and \eqref{Elas1}.

Next, using Lemma~\ref{lem:inf-sup}, we obtain a
$\tau \in H^1(\Omega, \mathbb{S})$ satisfying 
\begin{align}
  \divg \tau & = Q u-u_h,  \qquad  && \label{aux107.1}   \\
  \|\tau\|_{H^1(\Omega)} & \le  C \| Qu-u_h\|_{L^2(\Omega)}. \label{aux107.2}  
\end{align}
Combining~\eqref{aux107.1} with   \eqref{commutePiglobal}, 
\begin{align*}
  \| Qu-u_h\|_{L^2(\Omega)}^2&= (Qu-u_h, Qu-u_h)
                               = (u-u_h, Qu - u_h)
  \\
    &= (u-u_h, \divg \Pi \tau)+(u-u_h, \divg(\tau-\Pi \tau)) && \text{(by~\eqref{aux107.1})} \\
    &=-(\Aa(\sigma - \sigma_h), \Pi \tau) + (u-u_h, \divg(\tau-\Pi \tau)) && \text{(by~\eqref{Elas1}, \eqref{FEM1})} \\
    &=-(\Aa(\sigma - \sigma_h), \Pi \tau) + (u-Qu, \divg(\tau-\Pi \tau)) && \text{(by~\eqref{commutePiglobal}).}
\end{align*}
By the Cauchy--Schwarz inequality, \eqref{Pibound}, and \eqref{aux107.2}, 
\begin{equation*}
\|Qu-u_h\|_{L^2(\Omega)} \le C \big(\| \sigma-\sigma_h\|_{\Aa}+ \|Q u-u\|_{L^2(\Omega)}\big),
\end{equation*}
so \eqref{error:u} now follows by the triangle inequality. 
\end{proof}

Using the above result along with \eqref{Pibound} and the approximation properties of $Q$ we obtain the following corollary.
\begin{corollary}
  \label{cor:errorin-sig_u}
Under the hypothesis of Theorem \ref{mainerrorthm}, assuming also that $\sigma \in H^2(\Omega, \mathbb{S})$ and $u \in [H^2(\Omega)]^3$, we have 
\begin{equation}
  \|\sigma-\sigma_h\|_{\Aa}
  + \|u-u_h\|_{L^2(\Omega)}
  \le C  h^2 (\|\sigma\|_{H^2(\Omega)}+ \|u\|_{H^2(\Omega)}).
\end{equation}
\end{corollary}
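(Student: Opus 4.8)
The plan is to simply combine the two estimates of Theorem~\ref{mainerrorthm} with the interpolation bound~\eqref{Pibound} at the higher regularity index $s=2$ and the standard approximation property of the $L^2$-projection $Q$ onto $V_h$. First I would bound the stress error: by~\eqref{error:sigma} and the triangle inequality, $\|\sigma-\sigma_h\|_\Aa \le \|\sigma-\Pi\sigma\|_\Aa + \|\Pi\sigma-\sigma_h\|_\Aa \le 2\|\sigma-\Pi\sigma\|_\Aa$. Since $\Aa$ is bounded by~\eqref{A-bounded}, one has $\|\cdot\|_\Aa \le \gamma^{1/2}\|\cdot\|_{L^2(\Omega)}$, so applying~\eqref{Pibound} with $s=2$ yields $\|\sigma-\sigma_h\|_\Aa \le C h^2 \|\sigma\|_{H^2(\Omega)}$.

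Next I would bound the displacement error using~\eqref{error:u}, namely $\|u-u_h\|_{L^2(\Omega)} \le C\big(\|\sigma-\sigma_h\|_\Aa + \|Qu-u\|_{L^2(\Omega)}\big)$. The first term on the right is already controlled by the previous step. For the second term, since $V_h$ contains all piecewise linear vector fields on the coarse mesh $\Th$ (not on the Alfeld refinement $\Tha$), the $L^2$-orthogonal projection $Q$ onto $V_h$ satisfies the standard Bramble--Hilbert estimate $\|u-Qu\|_{L^2(\Omega)} \le C h^2 \|u\|_{H^2(\Omega)}$ for $u \in [H^2(\Omega)]^3$. Adding the two bounds then gives the claimed estimate.

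There is no genuine obstacle here; the proof is a routine assembly of previously established ingredients. The only points worth flagging are that~\eqref{Pibound} is invoked at $s=2$, which is legitimate because $\Sigma_h(T)$ contains piecewise linear symmetric tensors so that $\Pi_T$ reproduces them, and that $Q$ has full second-order accuracy precisely because $V_h$ is built on $\Th$ rather than on the split mesh.
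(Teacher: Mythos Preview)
Your proposal is correct and follows exactly the route the paper indicates: combine Theorem~\ref{mainerrorthm} with the interpolation estimate~\eqref{Pibound} at $s=2$ and the standard approximation property of the $L^2$-projection~$Q$ onto~$V_h$. The paper states this in a single sentence preceding the corollary, and you have simply spelled out the details.
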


\subsection{Superconvergence and postprocessing}
In this subsection, we prove that a projection of the error $u-u_h$ superconverges at a rate
$h^3$ if we assume full elliptic regularity, i.e., if we assume that $\om$ and $\Aa$ are such that the inequality
\begin{equation}
  \label{eq:full-reg}
  \| u \|_{H^2(\om)} + \| \sigma \|_{H^1(\om)} \le C \| f \|_{L^2(\om)}
\end{equation}
holds for any solution $\sigma, u$ of \eqref{strong-eqs} obtained with an 
$f \in L^2(\om)$.
We will need the
following commuting property:
\begin{equation}\label{commutePiglobalw}
  \divg(\Pi \omega -\omega) = 0,
  \qquad 
  \omega \in    H^1(\Omega, \mathbb{S}), \;
  \divg \omega \in W_h. 
\end{equation}
Indeed, for such $\omega$, since  $\divg(\Pi \omega -\omega) \in W_h$, for all $w \in W_h,$
we have   $( \divg(\Pi \omega -\omega),  w) =( \divg(\Pi \omega -\omega),  I w)$ by \eqref{ITinner}, and the latter vanishes by \eqref{commutePiglobal}.

\begin{theorem}
  \label{thm:errorprojsuperconv}
  Let $\sigma, u$ solve \eqref{Elas} and $\sigma_h, u_h$ solve \eqref{FEM}. Assume that the full regularity estimate~\eqref{eq:full-reg} holds.
  Then, the following estimate holds
\begin{equation}
  \label{Pu-estimate}
    \| P(u-u_h)\|_{L^2(\Omega)} \le   C h
    \| \sigma-\sigma_h\|_{\Aa} + C h^2 \| \divg(\sigma-\Pi \sigma)\|_{L^2(\Omega)}. 
  \end{equation}
  If  $\sigma \in H^2(\om)$, this implies
  \begin{equation}
    \label{Pu-estimate-2}
    \| P(u-u_h)\|_{L^2(\Omega)} \le   C h^3     |\sigma|_{H^2(\om)}.
  \end{equation}
\end{theorem}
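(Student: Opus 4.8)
The plan is an Aubin--Nitsche duality argument built on the adjoint elasticity problem. Write $e_\sigma = \sigma - \sigma_h$ and $e_u = u - u_h$, and note $P e_u \in W_h$. Let $(\Psi, \Phi) \in \Sigma \times V$ be the solution of the elasticity system~\eqref{Elas} with $f$ replaced by $P e_u$, i.e.\ $\Aa \Psi = \epsilon(\Phi)$ and $\divg \Psi = P e_u$ in $\om$ with $\Phi = 0$ on $\partial\om$ (using that $\Aa$ is symmetric, so this operator is self-adjoint). The full regularity hypothesis~\eqref{eq:full-reg}, applied to this system, gives $\|\Psi\|_{H^1(\om)} + \|\Phi\|_{H^2(\om)} \le C\|P e_u\|_{L^2(\om)}$; in particular $\Psi \in H^1(\om,\SSS) \subset D_\Pi$, so $\Pi\Psi \in \Sigma_h$ is available as a test function.

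The first key step is to replace $\Psi$ by $\Pi\Psi$. Because $\divg\Psi = P e_u \in W_h$, the commuting property~\eqref{commutePiglobalw} yields $\divg\Pi\Psi = \divg\Psi = P e_u$. Using this and that $P$ is the $L^2(\om)$-orthogonal projection onto $W_h$,
\[
\|P e_u\|_{L^2(\om)}^2 = (e_u, P e_u) = (e_u, \divg\Psi) = (e_u, \divg\Pi\Psi).
\]
Testing~\eqref{Elas1} and~\eqref{FEM1} with the common admissible function $\tau = \Pi\Psi$ and subtracting gives $(\Aa e_\sigma, \Pi\Psi) + (e_u, \divg\Pi\Psi) = 0$, hence
\[
\|P e_u\|_{L^2(\om)}^2 = -(\Aa e_\sigma, \Pi\Psi) = -(\Aa e_\sigma, \Psi) - (\Aa e_\sigma, \Pi\Psi - \Psi).
\]
For the first term, symmetry of $\Aa$, the dual constitutive law $\Aa\Psi = \epsilon(\Phi)$, symmetry of $e_\sigma$, an integration by parts, and $\Phi|_{\partial\om} = 0$ give $-(\Aa e_\sigma, \Psi) = -(e_\sigma, \epsilon(\Phi)) = -(e_\sigma, \nabla\Phi) = (\divg e_\sigma, \Phi)$.

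Now I estimate the two resulting terms. From~\eqref{divsig}, $\divg\sigma_h = \divg\Pi\sigma$, so $\divg e_\sigma = \divg(\sigma - \Pi\sigma)$, which by~\eqref{commutePiglobal} is $L^2$-orthogonal to $V_h$. Subtracting $Q\Phi$ (or any $V_h$-interpolant of $\Phi$) and using that $V_h$ contains the piecewise linears on $\Th$, hence has $O(h^2)$ approximation power on $H^2(\om)$,
\[
(\divg e_\sigma, \Phi) \le C h^2 \|\divg(\sigma - \Pi\sigma)\|_{L^2(\om)}\,\|\Phi\|_{H^2(\om)}.
\]
For the other term, Cauchy--Schwarz, boundedness~\eqref{A-bounded} of $\Aa$, and~\eqref{Pibound} with $s = 1$ give $|(\Aa e_\sigma, \Pi\Psi - \Psi)| \le C\|e_\sigma\|_\Aa\,\|\Psi - \Pi\Psi\|_{L^2(\om)} \le C h\,\|e_\sigma\|_\Aa\,\|\Psi\|_{H^1(\om)}$. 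Inserting both bounds, using $\|\Psi\|_{H^1(\om)} + \|\Phi\|_{H^2(\om)} \le C\|P e_u\|_{L^2(\om)}$, and cancelling one power of $\|P e_u\|_{L^2(\om)}$ proves~\eqref{Pu-estimate}. Finally, if $\sigma \in H^2(\om)$, then~\eqref{Pibound} with $s = 2$ bounds $\|\divg(\sigma - \Pi\sigma)\|_{L^2(\om)} \le C h |\sigma|_{H^2(\om)}$, while~\eqref{error:sigma}, the triangle inequality, and~\eqref{Pibound} give $\|e_\sigma\|_\Aa \le 2\|\sigma - \Pi\sigma\|_\Aa \le C h^2 |\sigma|_{H^2(\om)}$; substituting into~\eqref{Pu-estimate} yields~\eqref{Pu-estimate-2}.

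The routine parts are all interpolation and integration by parts; the one point requiring care is the bookkeeping of powers of $h$. The extra $h^2$ (rather than $h$) in front of $\|\divg(\sigma - \Pi\sigma)\|_{L^2(\om)}$ is available precisely because $\divg e_\sigma$ is $L^2$-orthogonal to the full piecewise linear space $V_h$ via~\eqref{commutePiglobal}, not merely to the piecewise constants $W_h$. Dually, it is essential to invoke the $W_h$-version of the commuting identity~\eqref{commutePiglobalw}, not~\eqref{commutePiglobal}, to replace $\divg\Psi$ by $\divg\Pi\Psi$ in the first display, since $\divg\Psi = P e_u$ lies in $W_h$ but need not lie in $V_h$. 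These two complementary uses of the two commuting properties are the heart of the superconvergence argument.
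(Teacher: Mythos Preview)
Your proof is correct and follows essentially the same Aubin--Nitsche duality argument as the paper: set up the adjoint problem with data $P(u-u_h)\in W_h$, use the $W_h$-commuting property~\eqref{commutePiglobalw} to pass from $\divg\Psi$ to $\divg\Pi\Psi$, split $-(\Aa e_\sigma,\Pi\Psi)$ into the $\Psi$ and $\Pi\Psi-\Psi$ pieces, and handle the former via~\eqref{divsig} and the $V_h$-orthogonality~\eqref{commutePiglobal}. Your closing paragraph highlighting why both commuting identities are needed captures precisely the mechanism behind the extra power of~$h$.
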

\begin{proof}
Let $\tau$ and $\psi$ solve
\begin{subequations}\label{dual}
\begin{alignat}{2}
\Aa \tau=& \epsilon (\psi) \qquad && \text{ in } \Omega,  \label{dual1}\\
\divg \tau=& P(u-u_h) \qquad  && \text{ in } \Omega, \label{dual2}\\
\psi=& 0 \qquad && \text{ on }  \partial \Omega. \label{dual3}
\end{alignat}
\end{subequations}
By the full elliptic regularity assumption~\eqref{eq:full-reg}, we have
\begin{equation}\label{ellipticreg}
\| \tau\|_{H^1(\Omega)}+ \|\psi\|_{H^2(\Omega)} \le C \| P(u-u_h)\|_{L^2(\Omega)}.
\end{equation}
Then, 
\begin{alignat*}{2}
\| P(u-u_h)\|_{L^2(\Omega)}^2& = ( P(u-u_h), \divg  \tau)  \qquad  && \text{(by \eqref{dual2})} \\
& =  ( P(u-u_h), \dive  \Pi \tau) \qquad  && \text{(by \eqref{commutePiglobalw})}  \\
& =  ( u-u_h, \dive  \Pi \tau) \qquad 
  \\
& =  -( \mathcal{A}(\sigma-\sigma_h), \Pi \tau)  \qquad && \text{(by \eqref{FEM1}, \eqref{Elas1})}  \\
& = -( \mathcal{A}(\sigma-\sigma_h), \Pi \tau- \tau)-( \mathcal{A} (\sigma-\sigma_h),  \tau).
\end{alignat*}
We can now simplify the last term
\begin{alignat*}{2}
    ( \mathcal{A} (\sigma-\sigma_h),  \tau)& =  ( \sigma-\sigma_h, \mathcal{A} \tau) \qquad && \text{(by symmetry of $\Aa$)} \\
    & =  (\sigma-\sigma_h, \epsilon(\psi)) \qquad && \text{(by \eqref{dual1})} \\
    & =  -( \dive(\sigma-\sigma_h), \psi) \qquad && \text{(integration by parts)}  \\
    & =  -( \dive(\sigma-\Pi \sigma), \psi-Q \psi ) \qquad && \text{(by \eqref{divsig}, \eqref{commutePiglobal}).} 
\end{alignat*}
Hence, we obtain 
\begin{equation*}
    \| P(u-u_h)\|_{L^2(\Omega)}^2=  -( \mathcal{A}(\sigma-\sigma_h), \Pi \tau- \tau)+ ( \dive(\sigma-\Pi \sigma), \psi-Q \psi ).
\end{equation*}
The first estimate \eqref{Pu-estimate}  of the theorem now follows after using \eqref{Pibound}, approximation properties of $Q$, and \eqref{ellipticreg}. The second follows from the first and \eqref{Pibound}.
\end{proof}

We can use the superconvergence result to construct a better approximation to $u$ using a local post-processing scheme. 
Let $\RM(T)$ denote the space of  rigid displacements on $T$. Then, we can decompose $W_h(T)$ as 
\begin{equation}
  \label{eq:Wh-decomp}
    W_h(T)= P_T \RM(T)+ (P_T \RM(T))^{\perp}.
\end{equation}
where 
\begin{align*}
P_T \RM(T)&:= \{ P_T v: v \in \RM(T) \}=\{ w \in W_h(T): I_T w \in \RM(T)\}, \\
(P_T \RM(T))^{\perp}&:= \{ w \in W_h(T): (w, P_T v)_T=0, \text{ for all } v \in \RM(T) \}.
\end{align*}
Define the projection $P_T^\RM:L^2(T) \to P_T\RM(T)$ by 
\begin{equation}
  (P_T^\RM q, w)_T=(q, w)_T, \quad {\text{ for all }} w \in P_T\RM(T).
\end{equation}
Recalling that $P_T$ is injective on $V_h(T)$  due to~\eqref{eq:PhIh}, 
we have 
\begin{subequations}\label{enorms}
\begin{alignat}{2}
  \|v\|_{L^2(T)} & \le  C \| P_T v\|_{L^2(T)}  \qquad  && {\text{ for all }} v \in V_h(T),  \label{enorms1} \\
  \intertext{by 
equivalence of norms on finite dimensional spaces and scaling. Similarly,}
\|q\|_{L^2(T)} & \le C  h_T \| \epsilon(q)\|_{L^2(T)}+ \| P_T^\RM q \|_{L^2(T)}  \qquad  && {\text{ for all }} q \in [\pol_2(T)]^3. \label{enorms2}
\end{alignat}
\end{subequations}
Indeed, to show that the right hand side of \eqref{enorms2} is a norm on $[\pol_2(T)]^3,$ suppose  $\epsilon(q)=0$ on $T$.  Then $q$ must be a rigid displacement.  If furthermore $P_T^\RM q=0,$  then we see that $P_T q=0$  and \eqref{enorms1} shows that $q=0$. Together with standard scaling arguments, this establishes~\eqref{enorms2}.
Define the quadratic space
\begin{align}
  \label{eq:Sh-defn}
    S_h(T):=\{ q \in [\pol_2(T)]^3: (q,v)_T=0, {\text{ for all }} v \in \RM(T) \}.
\end{align}
Using it, and following Stenberg \cite{stenberg1991postprocessing}, we define our post-processed approximation next.

{\bf{The post-processing system}}
finds $\ust_h \in [\pol_2(T)]^3$ satisfying
\begin{subequations}\label{post}
\begin{alignat}{2}
(\epsilon(\ust_h), \epsilon(v))_T=& (\Aa \sigma_h, \epsilon(v))_T \qquad && {\text{ for all }} v \in S_h(T), \label{post1}\\
(\ust_h, w)_T=& (Pu_h,w)_T \qquad  && {\text{ for all }} w \in P_T \RM(T), \label{post2}
\end{alignat}
\end{subequations}
for every $T \in \Th$, given $\sigma_h, u_h$ solving~\eqref{FEM}.    It is obvious from the
definition~\eqref{eq:Sh-defn} of $S_h(T)$ that
$\dim S_h(T) + \dim \RM(T) = \dim [\pol_2(T)]^3$.  Since
\begin{equation}
  \label{eq:6}
  \dim \RM(T) = \dim P_T \RM(T)  
\end{equation}
by the injectivity of  $P_T$ on $V_h(T)$, 
the system~\eqref{post} is
square. Moreover, \eqref{post} is uniquely solvable: indeed 
if its right hand side vanishes,
then, \eqref{post1} shows that $\ust_h$ is a rigid displacement, which then together with \eqref{post2} with \eqref{enorms2} implies that $\ust_h$ must vanish.

Next, we show that $u - u_h^\star$ converges at a rate higher than
that expected for $u-u_h$ from Corollary~\ref{cor:errorin-sig_u}.  Let
$Q_2$ denote the element-wise $L^2$-orthogonal projection onto
$[\pol_2(T)]^3$.

\begin{theorem}\label{mainthmpost}
Let $\sigma, u$ solve \eqref{Elas} and $\sigma_h, u_h$ solve \eqref{FEM}. If $\ust_h$ is given by \eqref{post}, then
\begin{alignat*}{1}
  \|u-\ust_h\|_{L^2(T)} & \le  C (\|u-Q_2 u\|_{L^2(T)}+\|P_T(u_h- u)\|_{L^2(T)})\\
  & + C h_T \big(\|\epsilon(u-Q_2u)\|_{L^2(T)}+ \|\Aa(\sigma-\sigma_h)\|_{L^2(T)}\big). 
\end{alignat*}
\end{theorem}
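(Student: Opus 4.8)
The plan is to run the classical Stenberg postprocessing argument, anchored by the local norm equivalence \eqref{enorms2}. First I would use the triangle inequality to split $u-\ust_h=(u-Q_2u)+(Q_2u-\ust_h)$. The first summand contributes $\|u-Q_2u\|_{L^2(T)}$ immediately. The second summand lies in $[\pol_2(T)]^3$, so \eqref{enorms2} bounds it by $Ch_T\|\epsilon(Q_2u-\ust_h)\|_{L^2(T)}+\|P_T^\RM(Q_2u-\ust_h)\|_{L^2(T)}$, and the proof reduces to estimating these two quantities.

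For the strain term, observe that $[\pol_2(T)]^3=S_h(T)\oplus\RM(T)$ is an $L^2(T)$-orthogonal direct sum, by the definition \eqref{eq:Sh-defn} of $S_h(T)$ together with the dimension identity $\dim S_h(T)+\dim\RM(T)=\dim[\pol_2(T)]^3$. Writing $Q_2u-\ust_h=w_0+r_0$ with $w_0\in S_h(T)$ and $r_0\in\RM(T)$, we have $\epsilon(Q_2u-\ust_h)=\epsilon(w_0)$. Comparing \eqref{post1} with the constitutive identity $(\epsilon(u),\epsilon(v))_T=(\Aa\sigma,\epsilon(v))_T$ (valid since $\Aa\sigma=\epsilon(u)$) gives the Galerkin-type relation $(\epsilon(u-\ust_h),\epsilon(v))_T=(\Aa(\sigma-\sigma_h),\epsilon(v))_T$ for all $v\in S_h(T)$. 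Hence $\|\epsilon(w_0)\|_{L^2(T)}^2=(\epsilon(Q_2u-u),\epsilon(w_0))_T+(\Aa(\sigma-\sigma_h),\epsilon(w_0))_T$, and Cauchy--Schwarz yields $\|\epsilon(Q_2u-\ust_h)\|_{L^2(T)}\le\|\epsilon(u-Q_2u)\|_{L^2(T)}+\|\Aa(\sigma-\sigma_h)\|_{L^2(T)}$.

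For the rigid term, I would first record the projection identities that make $P_T^\RM$ harmless: $P_T^\RM$ is the $L^2(T)$-orthogonal projection onto $P_T\RM(T)\subset W_h(T)$, hence a contraction in $L^2(T)$, and since $P_T$ is the $L^2$-projection onto $W_h(T)\supset P_T\RM(T)$ one has $P_T^\RM=P_T^\RM\circ P_T$. Testing \eqref{post2} against $P_T\RM(T)$ then gives $P_T^\RM\ust_h=P_T^\RM(Pu_h)=P_T^\RM u_h$, so that $P_T^\RM(Q_2u-\ust_h)=P_T^\RM(Q_2u-u)+P_T^\RM(u-u_h)$. Using the contraction property on the first term and $P_T^\RM=P_T^\RM P_T$ on the second, $\|P_T^\RM(Q_2u-\ust_h)\|_{L^2(T)}\le\|Q_2u-u\|_{L^2(T)}+\|P_T(u-u_h)\|_{L^2(T)}$. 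Combining the three displayed bounds through the triangle inequality yields the asserted estimate (using $\|P_T(u-u_h)\|_{L^2(T)}=\|P_T(u_h-u)\|_{L^2(T)}$).

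The main obstacle is the bookkeeping around the three projections $Q_2$, $P_T$, $P_T^\RM$, compounded by the fact that $Q_2u-\ust_h$ does not lie in the test space $S_h(T)$: one must peel off its rigid-body part before invoking \eqref{post1}, and must separately verify $P_T^\RM=P_T^\RM P_T$ and $P_T^\RM\ust_h=P_T^\RM u_h$ so that the rigid contribution is absorbed into $\|P_T(u-u_h)\|_{L^2(T)}$ — the quantity for which the superconvergence bound \eqref{Pu-estimate-2} is available — rather than into the larger $\|u-u_h\|_{L^2(T)}$. Everything else is routine Cauchy--Schwarz and triangle-inequality estimation.
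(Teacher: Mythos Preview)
Your proof is correct and follows essentially the same Stenberg postprocessing argument as the paper. The only cosmetic difference is in handling the strain term: the paper observes that the Galerkin relation $(\epsilon(u-\ust_h),\epsilon(v))_T=(\Aa(\sigma-\sigma_h),\epsilon(v))_T$ extends trivially from $S_h(T)$ to all of $[\pol_2(T)]^3$ (since $\epsilon(v)=0$ for $v\in\RM(T)$) and tests directly with $v=\ust_h-Q_2u$, whereas you decompose $Q_2u-\ust_h=w_0+r_0$ and test with $w_0\in S_h(T)$; these are the same manoeuvre in different clothing, and the rigid-term bookkeeping with $P_T^\RM=P_T^\RM P_T$ is identical in both.
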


\begin{proof}
Using \eqref{post1} and \eqref{Elas1} we obtain
\begin{alignat}{2}
(\epsilon(\ust_h-u), \epsilon(v))_T=& (\Aa (\sigma_h-\sigma), \epsilon(v))_T  \qquad \text{ for all } v \in S_h(T).
\end{alignat}
Since the same equality trivially holds for all $v \in \RM(T)$, it actually
holds for all $v \in [\pol_2(T)]^3$. Hence we may choose $v = u_h^\star - Q_2u$ in
$[\pol_2(T)]^3$ and manipulate to get 
\begin{equation}
  \label{eq:2}
\|\epsilon(\ust_h-Q_2 u)\|_{L^2(T)} \le  \|\epsilon(u-Q_2u)\|_{L^2(T)}+ \|\Aa(\sigma-\sigma_h)\|_{L^2(T)}.
\end{equation}
Using \eqref{enorms2} we have
\begin{equation}
  \label{eq:4}
  \|\ust_h-Q_2u\|_{L^2(T)} \le C h_T \|\epsilon(\ust_h-Q_2u)\|_{L^2(T)}+ C \| P_T^\RM (\ust_h-Q_2u)\|_{L^2(T)}. 
\end{equation}
It is easy to see that $P_T^\RM Q_2 u = P_T^\RM P_T Q_2 u$. Also,
\eqref{post2} implies $P_T^\RM u_h^\star = P_T^\RM P_T u_h$. 
Therefore, 
\begin{align}
  \nonumber 
  \| P_T^\RM (\ust_h-Q_2u)\|_{L^2(T)}  & \le \| P_T(u_h-Q_2 u)\|_{L^2(T)}
  \\ \label{eq:3}
  & \le \| P_T(u_h- u)\|_{L^2(T)}+ \| u- Q_2 u\|_{L^2(T)}.
\end{align}
The result now follows by using the estimates of~\eqref{eq:2}
and~\eqref{eq:3} within~\eqref{eq:4}.
\end{proof}


\begin{corollary}
  Under the hypotheses of
  Theorems~\ref{thm:errorprojsuperconv} and~\ref{mainthmpost}, 
  assuming also that $\sigma \in H^2(\Omega, \mathbb{S}), u \in [H^3(\Omega)]^3$,
  we have 
\begin{alignat*}{1}
  \|u-\ust_h\|_{L^2(\Omega)} \le   C h^3 (\|\sigma\|_{H^2(\Omega)}+ \|u\|_{H^3(\Omega)}).
\end{alignat*}
\end{corollary}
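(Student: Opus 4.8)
The plan is to take the elementwise estimate of Theorem~\ref{mainthmpost}, square it, sum over all $T\in\Th$, and bound each of the four resulting contributions separately using ingredients already established. Since $(a+b+c+d)^2\le 4(a^2+b^2+c^2+d^2)$ and $h_T\le h$, this gives
\begin{align*}
  \|u-\ust_h\|_{L^2(\Omega)}^2
  \le C\sum_{T\in\Th}\Big(
  \|u-Q_2u\|_{L^2(T)}^2
  + \|P_T(u_h-u)\|_{L^2(T)}^2
  + h^2\|\epsilon(u-Q_2u)\|_{L^2(T)}^2
  + h^2\|\Aa(\sigma-\sigma_h)\|_{L^2(T)}^2\Big).
\end{align*}

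First I would dispatch the two terms involving $Q_2$. Because $u\in[H^3(\Omega)]^3$, the standard approximation properties of the elementwise $L^2$-projection $Q_2$ onto $[\pol_2(T)]^3$ give $\|u-Q_2u\|_{L^2(T)}\le Ch_T^3\|u\|_{H^3(T)}$ and $\|\epsilon(u-Q_2u)\|_{L^2(T)}\le Ch_T^2\|u\|_{H^3(T)}$; summing the squares bounds the first and third terms (the latter carries the extra $h^2$) each by $Ch^6\|u\|_{H^3(\Omega)}^2$. For the projected-error term, note $\sum_{T}\|P_T(u_h-u)\|_{L^2(T)}^2=\|P(u_h-u)\|_{L^2(\Omega)}^2$; since $\sigma\in H^2(\Omega,\SSS)$ and the full elliptic regularity hypothesis of Theorem~\ref{thm:errorprojsuperconv} is in force, estimate~\eqref{Pu-estimate-2} yields $\|P(u-u_h)\|_{L^2(\Omega)}\le Ch^3|\sigma|_{H^2(\Omega)}$. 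For the last term, boundedness of $\Aa$ from~\eqref{A-bounded} gives $\|\Aa(\sigma-\sigma_h)\|_{L^2(T)}\le C\|\sigma-\sigma_h\|_{L^2(T)}$, and summing produces $Ch^2\|\sigma-\sigma_h\|_{L^2(\Omega)}^2$, which by the norm equivalence coming from~\eqref{A-coercive}–\eqref{A-bounded} is controlled by $Ch^2\|\sigma-\sigma_h\|_\Aa^2$; Corollary~\ref{cor:errorin-sig_u} then bounds $\|\sigma-\sigma_h\|_\Aa\le Ch^2(\|\sigma\|_{H^2(\Omega)}+\|u\|_{H^2(\Omega)})$, and $\|u\|_{H^2(\Omega)}\le\|u\|_{H^3(\Omega)}$. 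Collecting all four contributions, taking square roots, and absorbing constants yields the asserted $O(h^3)$ bound.

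There is essentially no genuine obstacle here: the corollary is a bookkeeping exercise combining Theorem~\ref{mainthmpost} with the superconvergence estimate of Theorem~\ref{thm:errorprojsuperconv}, Corollary~\ref{cor:errorin-sig_u}, and polynomial approximation theory. The only points that deserve a moment's attention are, first, that the superconvergence input~\eqref{Pu-estimate-2} is already a global statement presupposing full elliptic regularity, so one must check its hypotheses are inherited — which they are, since Theorem~\ref{thm:errorprojsuperconv} is listed among the hypotheses; and second, that one should invoke the equivalence between $\|\cdot\|_{L^2(\Omega,\SSS)}$ and $\|\cdot\|_\Aa$ from~\eqref{A-coercive} and~\eqref{A-bounded} when passing between the $\Aa$-norm appearing in Corollary~\ref{cor:errorin-sig_u} and the plain $L^2$ norm that surfaces after summing the local bound.
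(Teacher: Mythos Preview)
Your proof is correct and follows essentially the same route as the paper's one-sentence argument: combine the local bound of Theorem~\ref{mainthmpost} with the superconvergence estimate of Theorem~\ref{thm:errorprojsuperconv}, the approximation properties of $Q_2$, and the stress error bound. The only cosmetic difference is that you invoke the already-packaged consequences \eqref{Pu-estimate-2} and Corollary~\ref{cor:errorin-sig_u}, whereas the paper cites the slightly more primitive ingredients \eqref{Pu-estimate} and \eqref{Pibound}; these amount to the same thing.
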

\begin{proof}
  Apply \eqref{Pu-estimate}, \eqref{Pibound}, and a standard estimate
  for projection error of $Q_2$, to further bound the estimate for
  $\|u-\ust_h\|_{L^2(\Omega)} $ given by Theorem~\ref{mainthmpost}.
\end{proof}

\subsection{Robustness in the incompressible limit}
  
We now investigate the convergence of the method for nearly
incompressible isotropic materials. Recall that $\Aa$ remains bounded
in this case but may become close to singular, so methods using
$\Aa^{-1}$ can become problematic. One of the advantages of the mixed
method is that it only needs $\Aa$. Nonetheless, as
$\| \cdot \|_\Aa$-norm becomes weaker in the incompressible limit, it
is natural to ask if $\sigma_h$ converges in a material-independent
norm, a question not answered by Corollary~\ref{cor:errorin-sig_u}.  To
answer this, we now use techniques similar to those in~\cite{arnold1984family}  to  prove that the $L^2$ error
$\|\sigma - \sigma_h\|_{L^2(\Omega)}$ has an error bound that is
robust for nearly incompressible materials satisfying the weaker
ellipticity inequality~\eqref{A-incompressible}.  In particular, for isotropic  materials with $\lambda \to \infty$, the $C$ below does not depend on~$\lambda$.
  
\begin{theorem}
  \label{thm:ee-robust}
  Assume that only \eqref{A-incompressible} holds (instead of~\eqref{A-coercive}).
  Let $\sigma, u$ solve \eqref{Elas} and $\sigma_h, u_h$ solve \eqref{FEM}. Then, 
    \begin{align}\label{error:sigma-incompressible}
        \|\sigma-\sigma_h\|_{L^2(\Omega)} \le C ( \|\sigma-\sigma_h\|_{\Aa} + h \|\div (\sigma - \Pi \sigma) \|_{L^2(\Omega)}). 
    \end{align}
\end{theorem}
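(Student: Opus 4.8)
The plan is to split $\sigma-\sigma_h$ into its deviatoric and spherical parts, bound the deviatoric part directly from~\eqref{A-incompressible}, and bound the trace (pressure-like) part by a duality argument that uses the divergence equation instead of the compliance. Set $e=\sigma-\sigma_h$; note $e\in H(\divg,\Omega,\SSS)$ and that $\Pi\sigma$ is assumed defined, i.e.\ $\sigma\in D_\Pi$. Since $\dev(\cdot)$ and $\mathbb I$ are Frobenius‑orthogonal directions and $e=\dev e+\tfrac13(\tr e)\mathbb I$, one has the pointwise identity $e:e=\dev e:\dev e+\tfrac13(\tr e)^2$, so $\|e\|_{L^2(\Omega)}^2=\|\dev e\|_{L^2(\Omega)}^2+\tfrac13\|\tr e\|_{L^2(\Omega)}^2$, while \eqref{A-incompressible} gives $\|\dev e\|_{L^2(\Omega)}^2\le\theta^{-1}\|e\|_\Aa^2$. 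Hence it suffices to prove $\|\tr e\|_{L^2(\Omega)}\le C\bigl(\|e\|_\Aa+h\|\divg(\sigma-\Pi\sigma)\|_{L^2(\Omega)}\bigr)$.

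To estimate the oscillating part of $\tr e$, I would argue by duality. Let $q\in L^2(\Omega)$ with $\int_\Omega q=0$ and $\|q\|_{L^2(\Omega)}=1$, and pick, via a standard right inverse of the divergence vanishing on $\partial\Omega$ (the Bogovskii operator; see~\cite{costabel2010bogovskiui}), a $\phi\in[\mathring H^1(\Omega)]^3$ with $\divg\phi=q$ and $\|\phi\|_{H^1(\Omega)}\le C$. Using again the Frobenius‑orthogonality of the deviatoric and spherical parts together with $\tr\epsilon(\phi)=\divg\phi=q$, one gets $(e,\epsilon(\phi))=(\dev e,\dev\epsilon(\phi))+\tfrac13(\tr e,q)$, so $\tfrac13(\tr e,q)=(e,\epsilon(\phi))-(\dev e,\dev\epsilon(\phi))$. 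The last term is at most $\|\dev e\|_{L^2(\Omega)}\|\epsilon(\phi)\|_{L^2(\Omega)}\le\theta^{-1/2}\|e\|_\Aa\|\phi\|_{H^1(\Omega)}\le C\|e\|_\Aa$. For the first term, $e$ is symmetric and $\phi$ vanishes on $\partial\Omega$, so integration by parts gives $(e,\epsilon(\phi))=-(\divg e,\phi)$; by~\eqref{divsig}, $\divg e=\divg(\sigma-\Pi\sigma)$, and by~\eqref{commutePiglobal}, $(\divg(\sigma-\Pi\sigma),v)=0$ for all $v\in V_h$, so taking $v=Q\phi$ (with $Q$ the $L^2$‑projection onto $V_h$) yields $(\divg e,\phi)=(\divg(\sigma-\Pi\sigma),\phi-Q\phi)$, whence $|(e,\epsilon(\phi))|\le\|\divg(\sigma-\Pi\sigma)\|_{L^2(\Omega)}\|\phi-Q\phi\|_{L^2(\Omega)}\le Ch\|\divg(\sigma-\Pi\sigma)\|_{L^2(\Omega)}$ by the standard approximation property of $Q$ and $\|\phi\|_{H^1(\Omega)}\le C$. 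Taking the supremum over all such $q$ bounds $\|\tr e-|\Omega|^{-1}\int_\Omega\tr e\|_{L^2(\Omega)}$ by $C(\|e\|_\Aa+h\|\divg(\sigma-\Pi\sigma)\|_{L^2(\Omega)})$.

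It remains to control the constant mode $\int_\Omega\tr e$, and this is where I expect the only real subtlety. The constant field $\mathbb I$ lies in $\Sigma_h\subset\Sigma$ (its entries are in $\pol_1$ and $\divg\mathbb I=0$), so testing \eqref{FEM1} and \eqref{Elas1} with $\tau=\mathbb I$ gives $(\Aa(\sigma-\sigma_h),\mathbb I)=0$, i.e.\ $(\sigma-\sigma_h,\Aa\mathbb I)=0$ by symmetry of $\Aa$; for the (nearly) incompressible isotropic materials of interest, $\Aa\mathbb I$ is a constant positive multiple of $\mathbb I$, forcing $\int_\Omega\tr(\sigma-\sigma_h)=0$ and hence $\|\tr e\|_{L^2(\Omega)}=\|\tr e-|\Omega|^{-1}\int_\Omega\tr e\|_{L^2(\Omega)}$; combining this with the two displays of the first paragraph finishes the proof. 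The delicate points are thus: (i) the integration by parts forces $\phi$ to vanish on $\partial\Omega$, so $q$ must be split into its mean and mean‑zero parts, the mean being recovered separately from the $\tau=\mathbb I$ test; and (ii) the consistency term emerges in the sharp form $h\|\divg(\sigma-\Pi\sigma)\|_{L^2(\Omega)}$ (rather than a cruder $h\|f\|_{L^2(\Omega)}$) precisely because $\divg(\Pi\sigma-\sigma_h)=0$ from~\eqref{divsig} lets us replace $\divg(\sigma-\sigma_h)$ by $\divg(\sigma-\Pi\sigma)$ before invoking the commuting property~\eqref{commutePiglobal}.
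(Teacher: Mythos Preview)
Your proof is correct and follows essentially the same route as the paper's: split into deviatoric and trace parts, bound the deviatoric part by~\eqref{A-incompressible}, and control the trace via a Bogovskii potential $\phi\in[H_0^1(\Omega)]^3$ combined with integration by parts, \eqref{divsig}, and~\eqref{commutePiglobal}. The paper goes slightly more directly by first using the $\tau=\mathbb{I}$ test to get $\int_\Omega\tr(\sigma-\sigma_h)=0$ and then taking $\divg\phi=\tr(\sigma-\sigma_h)$, whereas you supremize over mean-zero $q$ and handle the mean separately---but the key ingredients are identical, and you are rightly explicit that the $\tau=\mathbb{I}$ step needs $\Aa\mathbb{I}\propto\mathbb{I}$.
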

\begin{proof}
It is evident by setting $\tau_h$ to the $3 \times 3$ identity matrix
$\mathbb{I}$ in \eqref{Elas} and \eqref{FEM} that
$\int_{\Omega}\tr (\sigma - {\sigma}_h) =0.$
Hence there exists $\phi \in [H_0^1(\Omega)]^3$  (see, e.g., \cite{costabel2010bogovskiui}) such that
\begin{equation}
  \label{eq:phi-tr}
  \dive \phi = \tr (\sigma - \sigma_h), \qquad
  \| \phi \|_{H^1(\Omega)} \le C \| \tr(\sigma - \sigma_h)\|_{L^2(\Omega)}.
\end{equation}
Using this, 
\begin{equation}
  \label{eq:tr-id}
\begin{aligned}
  \|\tr(\sigma - {\sigma}_h)\|_{L^2(\Omega)}^2
  &= (\tr(\sigma - {\sigma}_h), \div \phi) 
  =  ( \tr(\sigma - {\sigma}_h) \mathbb{I},  \epsilon(\phi))
  \\ 
  &= 3 \big(\sigma - {\sigma}_h, \epsilon(\phi) \big)
 - 3(\dev(\sigma - {\sigma}_h),    \epsilon(\phi) \big)
  \\ 
  &= \revj{-} 3(\dive (\sigma - {\sigma}_h),  \phi) - 3
    (\dev(\sigma - {\sigma}_h), \epsilon(\phi)). 
  \\ 
  &= \revj{-} 3(\dive (\sigma - \Pi \sigma), \phi) - 3
    (\dev(\sigma - {\sigma}_h), \epsilon(\phi)),
\end{aligned}
\end{equation}
where we used \eqref{divsig} in the last step.
To estimate the first term on the final right hand side above, we again use  \eqref{divsig}
(and also \eqref{commutePiglobal}, which is implied by  \eqref{divsig}) to get 
\begin{align}
  \label{eq:incompressible-div}
  (\dive (\sigma - {\sigma}_h),  \phi)
  &= ( \dive (\sigma - \Pi \sigma) , \phi) 
    = (\dive (\sigma - \Pi \sigma), \phi - Q \phi) 
  \\
  \notag
                                                       &\le Ch \|\dive (\sigma - \Pi \sigma)\|_{L^2(\Omega)} \|\phi\|_{H^1(\Omega)}
\end{align}
The last term in~\eqref{eq:tr-id} is easily estimated by
\eqref{A-incompressible}:
\begin{align*}
  ( \dev(\sigma - {\sigma}_h), \epsilon(\phi))
  &\le
    C\| \sigma - {\sigma}_h\|_{\Aa} \| \phi \|_{H^1(\Omega)}.
\end{align*}
Together with~\eqref{eq:phi-tr}, 
these estimates imply  that  $\|\tr(\sigma - {\sigma}_h)\|_{L^2(\Omega)} \le Ch \|\sigma\|_{H^1(\Omega)}$, so the conclusion follows from 
$C\|\sigma - \sigma_h\|_{L^2(\Omega)} \le
\|\dev (\sigma - \sigma_h)\|_{L^2(\Omega)}
    + \| \tr(\sigma - \sigma_h) \|_{L^2(\Omega)}$ and
\eqref{A-incompressible}.
\end{proof}

\begin{remark} \label{rem:incomproofcritical}
  The above  proof extends to other cases (as we shall see in later sections)
  as long as an analogue of 
  the critical ingredient \eqref{divsig} is available.
  Also note that to obtain convergence
  rates, it is enough to simply use~\eqref{Pibound} in
  \eqref{error:sigma-incompressible}.
\end{remark}

\section{Piecewise constant displacements on the refinement}
\label{sec:P0-disp}

In this section we show that we can replace the displacement space $V_h$ with $W_h$ and obtain very similar results.
This replacement is attractive since $\div \Sigma_h = W_h$ and the equilibrium equation can then be exactly satisfied for numerical solutions. Also, 
the stress and displacement elements are implemented with 
respect to the same refined mesh. 
However, a price to pay, as we shall see now, is that we are not able to prove an  $O(h^3)$ superconvergence result (like in \eqref{Pu-estimate-2}) unless the right hand side $f$ belongs to~$W_h$. %

We start by proving unisolvency of slightly different dofs
(cf.~\eqref{Sigmah}) for the same stress space $\Sigma_h$.  It is
useful to note at the outset that by \eqref{eq:Wh-decomp}
and~\eqref{eq:6},
\[
  \dim (P_T \RM(T))^\perp = 12 - 6,
\]
so the
number of dofs in \eqref{Sigmah2} and \eqref{Sigmahp2} both equal six.

\begin{theorem}
An element $\omega \in  \Sigma_h(T)$  is uniquely determined by the following dofs:
\begin{subequations}
\label{Sigmahp}
\begin{alignat}{4}
&\int_F \omega \bn \cdot \kappa, \qquad &&\kappa\in [\pol_1(F)]^3, F \in \triangle_2(T), \qquad && \dofcnt{(36 dofs)} \label{Sigmahp1}\\
&\int_T \dive \omega \cdot v , \qquad && v \in (P_T \RM(T))^{\perp}. \qquad && \dofcnt{($6$ dofs)} \label{Sigmahp2}
\end{alignat}
\end{subequations}
\end{theorem}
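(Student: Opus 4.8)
The plan is to show that the 42 functionals in~\eqref{Sigmahp} are unisolvent on $\Sigma_h(T)$, whose dimension we already know is $42$ from Theorem~\ref{thmdofs1}. Since the count of dofs matches the dimension (36 face dofs plus $\dim(P_T\RM(T))^\perp = 12-6 = 6$), it suffices to prove that an $\omega \in \Sigma_h(T)$ on which all the dofs in~\eqref{Sigmahp} vanish must be zero. So let $\omega \in \Sigma_h(T)$ annihilate every functional in~\eqref{Sigmahp1} and~\eqref{Sigmahp2}.

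First, vanishing of the face dofs~\eqref{Sigmahp1} gives $\omega\bn|_{\partial T} = 0$, exactly as in the proof of Theorem~\ref{thmdofs1}. Next I would exploit that $\divg\omega \in W_h(T)$ (as noted right after the definition of $W_h(T)$), and decompose it according to~\eqref{eq:Wh-decomp} as $\divg\omega = w_0 + w_\perp$ with $w_0 \in P_T\RM(T)$ and $w_\perp \in (P_T\RM(T))^\perp$. The dofs~\eqref{Sigmahp2} say $(\divg\omega, v)_T = 0$ for all $v \in (P_T\RM(T))^\perp$; taking $v = w_\perp$ and using orthogonality of the decomposition shows $\|w_\perp\|_{L^2(T)}^2 = (\divg\omega, w_\perp)_T = 0$, so $\divg\omega = w_0 \in P_T\RM(T)$. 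Now I need to upgrade this to $\divg\omega = 0$. For that, pick $r \in \RM(T)$ with $P_T r = w_0$ (possible by the description $P_T\RM(T) = \{P_Tv : v\in\RM(T)\}$). Integrating by parts and using $\omega\bn|_{\partial T}=0$ together with $\epsilon(r)=0$,
\begin{equation*}
  (\divg\omega, r)_T = -(\omega, \epsilon(r))_T = 0.
\end{equation*}
On the other hand $(\divg\omega, r)_T = (w_0, r)_T = (P_T r, r)_T = (P_T r, P_T r)_T = \|w_0\|_{L^2(T)}^2$, using~\eqref{eq:PTdef} and $w_0 \in W_h(T)$. Hence $w_0 = 0$, so $\divg\omega = 0$.

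With $\divg\omega = 0$ and $\omega\bn|_{\partial T} = 0$ in hand, Lemma~\ref{Sigmazero} immediately yields $\omega = 0$. This proves injectivity of the dof map, and since $\dim\Sigma_h(T) = 42$ equals the number of functionals, the dofs~\eqref{Sigmahp} are unisolvent. The only slightly delicate point is the step establishing $\divg\omega \in P_T\RM(T)$ from the dofs and then killing that component via integration by parts against a rigid displacement; everything else is bookkeeping with the projections $P_T$ and $I_T$ and the already-proven Lemma~\ref{Sigmazero}. I expect no real obstacle here — the argument is a mild variant of the proof of Theorem~\ref{thmdofs1}, with the space $V_h(T)$ replaced by a decomposition adapted to $W_h(T)$, and the choice $v = I_T\divg\omega$ there replaced by testing against $(P_T\RM(T))^\perp$ and then against a rigid displacement.
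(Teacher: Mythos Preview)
Your proof is correct and uses essentially the same ingredients as the paper's, just with the two steps reversed: the paper first observes that $\omega\bn|_{\partial T}=0$ and integration by parts against rigid displacements force $\dive\omega\in(P_T\RM(T))^\perp$, and then the interior dofs~\eqref{Sigmahp2} kill it directly; you instead use~\eqref{Sigmahp2} to remove the $(P_T\RM(T))^\perp$ component first and then integrate by parts to remove the $P_T\RM(T)$ component. Both arguments are valid and equivalent, though the paper's ordering is a bit more economical.
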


\begin{proof}
Let $\omega \in  \Sigma_h(T)$  and assume that the dofs in \eqref{Sigmahp} vanish. Then we must show that $\omega=0$. The dofs in \eqref{Sigmahp1} show that $\omega \bn=0$ on $\partial T$. Then, by integration by parts, we get $\dive \omega \in  (P_T \RM(T))^{\perp}$, and \eqref{Sigmahp2} shows that $\dive \omega=0$. Then, by Lemma~\ref{Sigmazero}, $\omega=0$.  
\end{proof}

{\bf The mixed method with piecewise constant displacement}
finds $\ssigma_h \in \Sigma_h$ and $\uu_h \in W_h$ satisfying
\begin{subequations}\label{FEMp}
\begin{alignat}{2}
(\Aa \ssigma_h, \tau)+(\uu_h, \dive \tau)=& 0 \qquad && {\text{ for all }} \tau \in \Sigma_h, \label{FEMp1}\\
(\dive \ssigma_h, v)=& (f,v) \qquad  && {\text{ for all }} v \in W_h. \label{FEMp2}
\end{alignat}
\end{subequations}
The next result is analogous to Theorem~\ref{thm:inf-sup-JM}, so we omit its proof. 
\begin{theorem}
There exists a constant $\beta>0$ such that 
\begin{equation}
   \beta \le \inf_{0\not = v \in W_h} \sup_{0 \neq \omega \in \Sigma_h}  \frac{( \dive \omega, v)}{\|\omega\|_{H(\dive,\Omega)} \|v\|_{L^2(\Omega)} }. 
\end{equation}
\end{theorem}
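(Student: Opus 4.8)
The plan is to follow the proof of Theorem~\ref{thm:inf-sup-JM} almost verbatim, making only the one modification forced by the change of test space from $V_h$ to $W_h$. Given $0\neq v\in W_h$, I would first invoke Lemma~\ref{lem:inf-sup} to obtain a $\tau\in H^1(\Omega,\mathbb{S})$ with $\divg\tau=v$ and $\|\tau\|_{H^1(\Omega,\SSS)}\le C\|v\|_{L^2(\Omega)}$. Since $H^1(\Omega,\mathbb{S})\subset D_\Pi$, the interpolant $\omega:=\Pi\tau$ is a well-defined element of $\Sigma_h$, which will serve as the candidate in the supremum.

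The key point — and the only place where the argument differs from that of Theorem~\ref{thm:inf-sup-JM} — is that here $\divg\tau=v$ lies in $W_h$, so the sharper commuting identity~\eqref{commutePiglobalw} applies to $\tau$ and gives $\divg(\Pi\tau-\tau)=0$, i.e.\ $\divg\omega=v$ pointwise. (In Theorem~\ref{thm:inf-sup-JM} only the weaker~\eqref{commutePiglobal}, tested against $V_h$, was available and sufficient; now~\eqref{commutePiglobalw} is genuinely what is needed.) Hence $(\divg\omega,v)=\|v\|_{L^2(\Omega)}^2$.

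To finish, I would use~\eqref{Pibound} with $s=1$ together with the $H^1$-bound on $\tau$ to get $\|\omega\|_{H(\divg,\Omega)}\le C\|\tau\|_{H^1(\Omega)}\le C\|v\|_{L^2(\Omega)}$, and therefore
\[
  \sup_{0\neq\omega\in\Sigma_h}\frac{(\divg\omega,v)}{\|\omega\|_{H(\divg,\Omega)}\,\|v\|_{L^2(\Omega)}}
  \;\ge\;\frac{\|v\|_{L^2(\Omega)}^2}{C\|v\|_{L^2(\Omega)}^2}\;=\;\frac1C,
\]
so the bound holds with $\beta=1/C$. I do not anticipate any real obstacle; the only subtlety worth flagging is the observation that $v\in W_h$ is precisely what puts $\divg\tau$ into $W_h$ and thereby legitimizes replacing~\eqref{commutePiglobal} by~\eqref{commutePiglobalw} in the chain of equalities.
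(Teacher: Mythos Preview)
Your proposal is correct and is precisely the argument the paper has in mind: the paper omits the proof, saying it is ``analogous to Theorem~\ref{thm:inf-sup-JM},'' and your write-up is exactly that analogue, with the single necessary modification (replacing~\eqref{commutePiglobal} by~\eqref{commutePiglobalw}, which is legitimate because $\divg\tau=v\in W_h$) correctly identified.
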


Our error analysis of the method~\eqref{FEMp} does not proceed as in the previous section, but rather
through  the following auxiliary problem
to find $\tsigma \in \Sigma$,  $\tu \in V $ such that
\begin{subequations}\label{Elast}
\begin{alignat}{2}
(\Aa \tsigma, \tau)+(\tu, \dive \tau)=& 0 \qquad && {\text{ for all }} \tau \in \Sigma, \label{Elas1t}\\
(\dive \tsigma, v)=& (Pf,v) \qquad  && {\text{ for all }} v \in V. \label{Elas2t}
\end{alignat}
\end{subequations}
Now note that $\dive \tsigma=Pf \in W_h$. 


\begin{proposition}\label{prop142}
  Let $\sigma, u$ solve \eqref{Elas} and $\tsigma, \tu$ solve
  \eqref{Elast}. Then
\begin{equation}
\|\sigma-\tsigma\|_{\Aa}+\| u-\tu\|_{L^2(\Omega)}  \le C h \|f-Pf\|_{L^2(\Omega)}.
\end{equation}
\end{proposition}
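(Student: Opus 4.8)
The plan is to estimate the difference $(\sigma-\tsigma, u-\tu)$ between the two continuous mixed solutions~\eqref{Elas} and~\eqref{Elast}, which differ only in that the right-hand side $f$ is replaced by its $W_h$-projection $Pf$. Subtracting the two systems, the pair $(e_\sigma, e_u) := (\sigma-\tsigma, u-\tu)$ satisfies
\begin{subequations}
\begin{alignat}{2}
(\Aa e_\sigma, \tau)+(e_u, \dive \tau)=& 0 \qquad && {\text{ for all }} \tau \in \Sigma, \\
(\dive e_\sigma, v)=& (f-Pf,v) \qquad  && {\text{ for all }} v \in V.
\end{alignat}
\end{subequations}
This is again a Hellinger--Reissner system, now with load $f-Pf$. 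If one had full elliptic regularity for~\eqref{strong-eqs}, the standard a priori bound would give $\|e_\sigma\|_{H(\dive)}+\|e_u\|_{L^2} \le C\|f-Pf\|_{L^2}$, which is only $O(1)$, not $O(h)$. The gain of one power of $h$ must come from the orthogonality $(f-Pf, w)=0$ for all $w\in W_h$, i.e.\ from the fact that $f-Pf$ is $L^2$-orthogonal to elementwise constants on the Alfeld refinement.

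Concretely, I would proceed as follows. First, test the $e_u$-equation with $v=e_u$ and the $e_\sigma$-equation with $\tau = e_\sigma$; adding gives $\|e_\sigma\|_\Aa^2 = -(e_u, \dive e_\sigma) = -(f-Pf, e_u) = -(f-Pf, e_u - Pe_u)$, using orthogonality of $f-Pf$ to $W_h$. Since $f-Pf$ is also orthogonal to $W_h$, we may further write this as $-(f-Pf, (I-P)e_u)$ and bound it by $\|f-Pf\|_{L^2}\,\|(I-P)e_u\|_{L^2}$. The factor $\|(I-P)e_u\|_{L^2}$ is the elementwise approximation error of $e_u$ by constants on $\Tha$, which is $O(h)\,\|e_u\|_{H^1}$ \emph{provided} $e_u$ has $H^1$-regularity. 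To obtain such regularity, I would invoke elliptic regularity for the elasticity system~\eqref{strong-eqs} with load $f-Pf\in L^2$: under the standing assumption that $\Omega$ and $\Aa$ admit the full regularity estimate~\eqref{eq:full-reg} — which is exactly the hypothesis already used for the superconvergence result — one gets $\|\tilde u - u\|_{H^2} \supset \|e_u\|_{H^1}\le \|e_u\|_{H^2} \le C\|f-Pf\|_{L^2}$ and likewise $\|e_\sigma\|_{H^1}\le C\|f-Pf\|_{L^2}$. Combining, $\|e_\sigma\|_\Aa^2 \le C h\,\|f-Pf\|_{L^2}\,\|e_u\|_{H^1} \le C h\,\|f-Pf\|_{L^2}^2$, hence $\|e_\sigma\|_\Aa = \|\sigma-\tsigma\|_\Aa \le C h^{1/2}\|f-Pf\|_{L^2}$ — which is not yet the claimed rate, so a sharper argument is needed.

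The sharper route, which I expect to be the crux, is a duality (Aubin--Nitsche) argument for $\|e_u\|_{L^2}$ combined with a direct $O(h)$ bound on $\|e_\sigma\|_\Aa$. For the stress: pick $\tau\in H^1(\Omega,\SSS)$ with $\dive\tau = e_u$ and $\|\tau\|_{H^1}\le C\|e_u\|_{L^2}$ via Lemma~\ref{lem:inf-sup}; since $\dive e_\sigma = f-Pf \perp W_h$, one can insert an elementwise-constant interpolant and show $\|e_\sigma\|_\Aa^2 = -(e_u,\dive e_\sigma)$ is controlled after using that $e_u$ itself is $H^1$ (by elliptic regularity) so that $\|(I-P)e_u\|_{L^2}\le Ch\|e_u\|_{H^1}\le Ch\|f-Pf\|_{L^2}$; this yields $\|e_\sigma\|_\Aa \le Ch\|f-Pf\|_{L^2}$ directly once the elliptic regularity estimate is in hand (no half-power loss, because we bound $\|e_\sigma\|_\Aa^2$ by $\|f-Pf\|_{L^2}\cdot Ch\|f-Pf\|_{L^2}$). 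For the displacement: let $(\tau^*,\psi^*)$ solve the dual elasticity problem~\eqref{dual} with right-hand side $e_u$; then $\|e_u\|_{L^2}^2 = (e_u,\dive\tau^*) = -(\Aa e_\sigma,\tau^*) + (\text{terms involving }\dive e_\sigma\perp W_h)$, and using $\|\tau^*\|_{H^1}+\|\psi^*\|_{H^2}\le C\|e_u\|_{L^2}$ together with the orthogonality of $f-Pf$ to $W_h$ (to insert $Q$-projections and extract an $h$) gives $\|e_u\|_{L^2}\le C(\|e_\sigma\|_\Aa + h\|f-Pf\|_{L^2})\le Ch\|f-Pf\|_{L^2}$. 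The main obstacle is organizing the orthogonality gains carefully: every place where a power of $h$ is extracted relies on inserting a $W_h$- or $V_h$-projection against $f-Pf$ or $\dive e_\sigma$, and one must ensure that the residual regularity ($e_u\in H^1$, $e_\sigma\in H^1$, $\psi^*\in H^2$) genuinely follows from~\eqref{eq:full-reg} applied to the load $f-Pf\in L^2(\Omega)$. Collecting the two estimates yields $\|\sigma-\tsigma\|_\Aa + \|u-\tu\|_{L^2(\Omega)} \le Ch\|f-Pf\|_{L^2(\Omega)}$, as claimed.
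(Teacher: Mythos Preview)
Your argument has a genuine gap: you never actually obtain the $O(h)$ rate for the stress. In both your second and third paragraphs you arrive at
\[
\|e_\sigma\|_\Aa^2 \le \|f-Pf\|_{L^2(\Omega)}\,\|(I-P)e_u\|_{L^2(\Omega)} \le Ch\,\|f-Pf\|_{L^2(\Omega)}\,\|e_u\|_{H^1(\Omega)},
\]
and then bound $\|e_u\|_{H^1(\Omega)}\le C\|f-Pf\|_{L^2(\Omega)}$ via elliptic regularity. But $\|e_\sigma\|_\Aa^2 \le Ch\,\|f-Pf\|_{L^2(\Omega)}^2$ only gives $\|e_\sigma\|_\Aa \le Ch^{1/2}\|f-Pf\|_{L^2(\Omega)}$. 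Your third paragraph repeats the same inequality and claims ``no half-power loss,'' but that is an arithmetic slip; the bound is still $O(h^{1/2})$.

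The missing idea, which is how the paper proceeds, is to close the loop on $\|e_\sigma\|_\Aa$ itself rather than on $\|f-Pf\|$. Note first that $u,\tu\in H_0^1(\Omega)$ automatically (they are the displacement fields of~\eqref{Elas} and~\eqref{Elast}), so $e_u\in H_0^1(\Omega)$ without any regularity assumption. Elementwise Poincar\'e then gives $\|(I-P)e_u\|_{L^2(\Omega)}\le Ch\|\nabla e_u\|_{L^2(\Omega)}$, and Korn's first inequality on $H_0^1(\Omega)$ yields $\|\nabla e_u\|_{L^2(\Omega)}\le C\|\epsilon(e_u)\|_{L^2(\Omega)}$. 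Now use the constitutive relation: $\epsilon(e_u)=\Aa\sigma-\Aa\tsigma=\Aa e_\sigma$, so $\|\epsilon(e_u)\|_{L^2(\Omega)}\le C\|e_\sigma\|_\Aa$. Combining,
\[
\|e_\sigma\|_\Aa^2 \le Ch\,\|f-Pf\|_{L^2(\Omega)}\,\|e_\sigma\|_\Aa,
\]
which gives $\|e_\sigma\|_\Aa\le Ch\,\|f-Pf\|_{L^2(\Omega)}$ directly. No elliptic regularity is needed, and the proposition does not assume it. For the displacement, your duality machinery is unnecessary: simply take $\tau\in H^1(\Omega,\SSS)$ from Lemma~\ref{lem:inf-sup} with $\dive\tau=e_u$, test the first error equation, and obtain $\|e_u\|_{L^2(\Omega)}^2=-(\Aa e_\sigma,\tau)\le C\|e_\sigma\|_\Aa\|e_u\|_{L^2(\Omega)}$.
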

\begin{proof}
  Equations~\eqref{Elas} and \eqref{Elast} imply
  \begin{alignat}{1}
    \label{eq:7}
 \|\sigma-\tsigma\|_{\Aa}^2=(f-Pf, u-\tu)= (f-Pf, (u-\tu)-P(u-\tu)) . 
\end{alignat}
However, using element-wise Poincare inequality (see, e.g., \cite{Bebendorf-poincare}) and then using Korn's first inequality (\cite[Theorem~6.3-4]{Ciarlet:elasticity-1}),
\begin{equation*}
\|(u-\tu)-P(u-\tu) \|_{L^2(\Omega)}\le C h \| \epsilon(u-\tu)\|_{L^2(\Omega)} = C h \|\Aa(\sigma-\tsigma) \|_{L^2(\Omega)}.
\end{equation*}
Using this in~\eqref{eq:7} after an application of Cauchy-Schwarz
inequality, the stated bound for $\|\sigma-\tsigma\|_{\Aa}$ follows.
The same bound for $\|u - \tu\|_{L^2(\om)}$ now follows from~\eqref{Elas1t}, which implies that $ (u - \tu, \div \tau) = (\Aa (\tsigma - \sigma), \tau ),$ after choosing a $\tau$ such that $\div \tau=  u - \tu$ by  Lemma~\ref{lem:inf-sup}.
\end{proof}

\begin{remark}
At this point, one may choose to proceed using the canonical
interpolant $J$ corresponding to the dofs in~\eqref{Sigmahp}. One can prove an analogue of \eqref{Pibound}
for $J$ and also that for any $\omega \in D_\Pi$ such that
$\dive \omega \in W_h$, we have $ (\dive( \PPi \omega -\omega), w) =0$
for all $w\in W_h,$  which is enough for an error analysis using $J$.
But we pursue the quicker alternative of using the same $\Pi$ from Section~\ref{sec:JM-element}.  
\end{remark}

\begin{remark}
\revj{Alternatively, one may choose to proceed following the interesting work of Lederer and Stenberg  \cite{Stenberg-2023} to obtain error estimates in a broken $H^1$-norm.  Advantages of their approach include less regularity assumptions. In our analysis below based on $\Pi$, we can also reduce the regularity assumptions (cf. \cite[Theorem~4.8]{Amrouche-etal:1998}, \cite[p.125]{Brezzi-Fortin:1991}) and get optimal order of approximation for low regularity solutions. We don't pursue this, and present a more traditional analysis for its simplicity. 
}  
\end{remark}

\begin{theorem}\label{mainerrorthmp}
Let $\sigma, u$ solve \eqref{Elas} and $\ssigma_h, \uu_h$ solve \eqref{FEMp}. Then, 
\begin{equation}\label{error:sigmap}
  \| \Pi \sigma-\ssigma_h\|_{\Aa} \le
  \| \Pi \tsigma-\tsigma\|_{\Aa}+
  \|\Pi (\tsigma-\sigma)\|_{\Aa}
\end{equation}
and
\begin{equation}\label{error:up}
  C \|u-\uu_h\|_{L^2(\Omega)} \le 
  \|\sigma-\ssigma_h\|_{\Aa}
  + \|P u-u\|_{L^2(\Omega)}.
\end{equation}
\end{theorem}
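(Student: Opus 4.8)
The plan is to prove the two bounds separately, both by adapting the duality arguments of Theorem~\ref{mainerrorthm}, with the auxiliary problem~\eqref{Elast} serving as a bridge: its defining feature $\dive\tsigma = Pf \in W_h$ makes it ``consistent'' with the discrete equilibrium equation~\eqref{FEMp2}, something the exact problem~\eqref{Elas} is not, since $\dive\sigma = f$ need not lie in $W_h$. Throughout I assume, as elsewhere, that $\sigma$ and $\tsigma$ are regular enough to lie in $D_\Pi$ so that $\Pi\sigma$ and $\Pi\tsigma$ are defined.

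For~\eqref{error:sigmap}, I would first show that $\tau_0 := \Pi\tsigma - \ssigma_h \in \Sigma_h$ is divergence-free. Testing~\eqref{FEMp2} and~\eqref{Elas2t} against $v \in W_h$ and using $(Pf,v)=(f,v)$ gives $(\dive(\tsigma - \ssigma_h),v)=0$ for all $v \in W_h$; since $\dive\tsigma = Pf$ and $\dive\ssigma_h$ both lie in $W_h$, taking $v = \dive(\tsigma-\ssigma_h)$ yields $\dive\tsigma = \dive\ssigma_h$. Hence $\dive\tau_0 = \dive(\Pi\tsigma - \tsigma) \in W_h$, and this vanishes against all of $V_h$ by~\eqref{commutePiglobal}, therefore against all of $W_h$ (because $P\colon V_h\to W_h$ is an isomorphism, exactly as in the derivation of~\eqref{divsig}), so $\dive\tau_0 = 0$. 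Subtracting~\eqref{FEMp1} from~\eqref{Elas1t} and inserting $\tau = \tau_0$ annihilates the displacement term, giving $(\Aa(\tsigma-\ssigma_h),\tau_0)=0$, i.e.\ $\|\tau_0\|_\Aa^2 = (\Aa(\Pi\tsigma-\tsigma),\tau_0)$; Cauchy--Schwarz in $\|\cdot\|_\Aa$ then gives $\|\Pi\tsigma - \ssigma_h\|_\Aa \le \|\Pi\tsigma - \tsigma\|_\Aa$, and~\eqref{error:sigmap} follows from the triangle inequality $\|\Pi\sigma - \ssigma_h\|_\Aa \le \|\Pi(\sigma-\tsigma)\|_\Aa + \|\Pi\tsigma - \ssigma_h\|_\Aa$.

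For~\eqref{error:up}, it suffices to bound $\|Pu - \uu_h\|_{L^2(\Omega)}$ and then add $\|u-Pu\|_{L^2(\Omega)}$ by the triangle inequality. Invoke Lemma~\ref{lem:inf-sup} to obtain $\tau \in H^1(\Omega,\SSS)$ with $\dive\tau = Pu - \uu_h$ and $\|\tau\|_{H^1(\Omega)} \le C\|Pu - \uu_h\|_{L^2(\Omega)}$, and write $\|Pu-\uu_h\|^2 = (u-\uu_h,\dive\tau) = (u-\uu_h,\dive\Pi\tau) + (u-\uu_h,\dive(\tau-\Pi\tau))$, using $u - Pu \perp W_h$ for the first equality. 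The first term equals $-(\Aa(\sigma-\ssigma_h),\Pi\tau)$ by subtracting~\eqref{FEMp1} from~\eqref{Elas1} with test function $\Pi\tau \in \Sigma_h$. For the second term, the key observation — the one point where this departs from the $V_h$-case of Theorem~\ref{mainerrorthm} — is that here $\dive\tau = Pu - \uu_h \in W_h$, so $\dive(\Pi\tau - \tau) \in W_h$ and hence $(\dive(\Pi\tau - \tau),w)=0$ for all $w \in W_h$ (again via~\eqref{commutePiglobal} and the $P$-isomorphism); choosing $w = \uu_h$ and $w = Pu$ reduces the second term to $(u - Pu, \dive(\tau - \Pi\tau))$. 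Cauchy--Schwarz, the interpolation estimate~\eqref{Pibound} with $s=1$ (bounding $\|\Pi\tau\|_\Aa$ and $\|\dive(\tau-\Pi\tau)\|_{L^2(\Omega)}$ by $C\|\tau\|_{H^1(\Omega)}$), and the stability bound on $\|\tau\|_{H^1(\Omega)}$ then give $\|Pu - \uu_h\| \le C(\|\sigma - \ssigma_h\|_\Aa + \|u-Pu\|)$, which is~\eqref{error:up}.

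The argument is essentially bookkeeping once the auxiliary problem is in hand; the main point requiring care, at both places, is recognizing that a divergence landing in $W_h$ (for $\tsigma$ in the first estimate, and for the Lemma~\ref{lem:inf-sup} function $\tau$ in the second) is precisely what lets the $W_h$-valued objects — $\dive\ssigma_h$ and the displacement $\uu_h$ — interact correctly with $\Pi$, by upgrading~\eqref{commutePiglobal} from $V_h$ to all of $W_h$ through the $P$-isomorphism. There is no serious analytic obstacle beyond that.
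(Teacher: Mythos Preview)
Your proof is correct and follows essentially the same approach as the paper, which invokes the pre-packaged identity \eqref{commutePiglobalw} where you re-derive it via the $P$-isomorphism, and for \eqref{error:up} simply refers back to the argument for \eqref{error:u}. One incidental simplification: once you have $\dive(\Pi\tau-\tau)\in W_h$ and $(\dive(\Pi\tau-\tau),w)=0$ for all $w\in W_h$, you in fact have $\dive(\Pi\tau-\tau)=0$, so the residual term $(u-Pu,\dive(\tau-\Pi\tau))$ in your displacement estimate vanishes outright rather than merely being bounded.
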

\begin{proof}
  Subtracting~\eqref{FEMp} from~\eqref{Elast}, 
\begin{alignat}{2}
(\Aa (\tsigma-\ssigma_h), \tau)+(\tu-\uu_h, \dive \tau)=& 0 \qquad && {\text{ for all }} \tau \in \Sigma_h, \label{2-ee1} \\
(\dive (\tsigma-\ssigma_h), v)=& 0 \qquad  && {\text{ for all }} v \in W_h,  \label{2-ee2}
\end{alignat}
Since $\dive (\tsigma - \ssigma_h)$ is in  $ W_h$, equation~\eqref{2-ee2}
implies 
\begin{equation}\label{2-451}
\dive(\tsigma-\ssigma_h)=0.
\end{equation}
Also, using \eqref{commutePiglobalw}, 
\begin{equation}\label{2-451b}
    \dive (\Pi \tsigma-\tsigma)= 0 .
\end{equation}
Adding~\eqref{2-451} and~\eqref{2-451b}, we have
\begin{equation}
  \label{eq:2-451c}
  \dive (\Pi \tsigma-\ssigma_h)=0.   
\end{equation}
Hence, setting $\tau =\Pi \tsigma-\ssigma_h$ in~\eqref{2-ee1}, we immediately obtain 
\[
  \|\Pi \tsigma-\ssigma_h\|_{\Aa} \le
  \|\Pi \tsigma-\tsigma \|_{\Aa}.
\]
The estimate \eqref{error:up} follows the same argument as the proof of \eqref{error:u}.
\end{proof}


\begin{corollary}
Let $\sigma, u$ solve \eqref{Elas}, $\tsigma, \tu$ solve \eqref{Elast} and $\ssigma_h, \uu_h$ solve \eqref{FEMp}. If  $\sigma, \tsigma \in H^2(\Omega, \mathbb{S}), u \in [H^1(\Omega)]^3$, then
\[
\|\sigma-\ssigma_h\|_{\Aa}+ h \|u-\uu_h\|_{L^2(\Omega)} \le C  h^2 (\|\sigma\|_{H^2(\Omega)}+ \|\tsigma\|_{H^2(\Omega)}+ \|f\|_{H^1(\Omega)}+ \|u\|_{H^1(\Omega)}).
\]
 \end{corollary}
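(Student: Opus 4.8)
The plan is to assemble the a priori results already in hand: the quasi-optimality estimates \eqref{error:sigmap}--\eqref{error:up} from Theorem~\ref{mainerrorthmp}, the auxiliary-problem comparison of Proposition~\ref{prop142}, the interpolation bound \eqref{Pibound}, and the standard first-order approximation property of the element-wise $L^2$ projection $P$ onto the piecewise-constant space $W_h$ on $\Tha$, namely $\|g - Pg\|_{L^2(\Omega)} \le C h \|g\|_{H^1(\Omega)}$ for $g \in H^1(\Omega)$. Throughout I use \eqref{A-bounded} and \eqref{A-coercive} to treat $\|\cdot\|_\Aa$ as equivalent to $\|\cdot\|_{L^2(\Omega)}$.

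First I would bound the stress error. Writing $\sigma - \ssigma_h = (\sigma - \Pi\sigma) + (\Pi\sigma - \ssigma_h)$, the term $\|\sigma - \Pi\sigma\|_\Aa \le C\|\sigma - \Pi\sigma\|_{L^2(\Omega)} \le C h^2 \|\sigma\|_{H^2(\Omega)}$ by \eqref{Pibound} with $s=2$. For $\|\Pi\sigma - \ssigma_h\|_\Aa$ I invoke \eqref{error:sigmap}: its first summand $\|\Pi\tsigma - \tsigma\|_\Aa \le C h^2 \|\tsigma\|_{H^2(\Omega)}$ again by \eqref{Pibound}, while for the second summand I split $\|\Pi(\tsigma-\sigma)\|_{L^2(\Omega)} \le \|\tsigma - \sigma\|_{L^2(\Omega)} + \|(\tsigma-\sigma) - \Pi(\tsigma-\sigma)\|_{L^2(\Omega)}$, the last term being $\le C h^2(\|\sigma\|_{H^2(\Omega)} + \|\tsigma\|_{H^2(\Omega)})$ by \eqref{Pibound} (using $\tsigma-\sigma \in H^2$), and the first being controlled via Proposition~\ref{prop142} by $C\|\tsigma - \sigma\|_\Aa \le C h \|f - Pf\|_{L^2(\Omega)} \le C h^2 \|f\|_{H^1(\Omega)}$, where the last step uses $f \in H^1(\Omega)$ and the approximation property of $P$. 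Collecting, $\|\sigma - \ssigma_h\|_\Aa \le C h^2(\|\sigma\|_{H^2(\Omega)} + \|\tsigma\|_{H^2(\Omega)} + \|f\|_{H^1(\Omega)})$.

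Next, for the displacement error I apply \eqref{error:up}: $C\|u - \uu_h\|_{L^2(\Omega)} \le \|\sigma - \ssigma_h\|_\Aa + \|Pu - u\|_{L^2(\Omega)}$. The first term was just bounded by $C h^2(\cdots)$; the second is $\le C h \|u\|_{H^1(\Omega)}$ by the first-order approximation of $P$ and $u \in H^1(\Omega)$. Multiplying through by $h$ and absorbing the resulting $h^3$ contribution into the constant (since $h$ is bounded), we obtain $h\|u - \uu_h\|_{L^2(\Omega)} \le C h^2(\|\sigma\|_{H^2(\Omega)} + \|\tsigma\|_{H^2(\Omega)} + \|f\|_{H^1(\Omega)} + \|u\|_{H^1(\Omega)})$, which together with the stress bound gives the stated estimate.

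There is no deep obstacle; the argument is bookkeeping from the previously established lemmas. The only point needing a little care is the term $\|\Pi(\tsigma - \sigma)\|_\Aa$ in \eqref{error:sigmap}: rather than claiming unconditional $L^2$-stability of $\Pi$, one routes the estimate through $\tsigma - \sigma \in H^2$ (so \eqref{Pibound} applies) and through Proposition~\ref{prop142} for its size — and, relatedly, one must convert the $\|f - Pf\|_{L^2(\Omega)}$ factor coming from Proposition~\ref{prop142} into an $O(h)\|f\|_{H^1(\Omega)}$ term, which is exactly what upgrades the naively expected $O(h)$ bound to the stated $O(h^2)$.
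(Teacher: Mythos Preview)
Your proposal is correct and follows exactly the route the paper intends: the corollary is stated without proof because it is a direct combination of Theorem~\ref{mainerrorthmp}, Proposition~\ref{prop142}, the interpolation estimate~\eqref{Pibound}, and the first-order approximation property of $P$, which is precisely the bookkeeping you carry out. The one slightly delicate point you flag---handling $\|\Pi(\tsigma-\sigma)\|_\Aa$ via the split into $\|\tsigma-\sigma\|_{L^2}$ plus an interpolation error, and then using Proposition~\ref{prop142} together with $\|f-Pf\|_{L^2}\le Ch\|f\|_{H^1}$ to get the extra power of $h$---is exactly what is needed and is handled correctly.
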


 The next intermediate result gives an $O(h^3)$ superconvergence
 result for $P(\tu-\uu_h)$. It  will help us prove
 superconvergence of the projection of the error 
 $P(u-\uu_h)$.

\begin{lemma}
  \label{lem:aux-pw}
  Let  $\tsigma, \tu$ solve \eqref{Elast} and $\ssigma_h, \uu_h$ solve \eqref{FEMp}. Assume that the full regularity estimate~\eqref{eq:full-reg} holds.
  Then, the following estimate holds
\begin{equation}\label{713}
  \| P(\tu-\uu_h)\|_{L^2(\Omega)} \le   C
  h \| \tsigma-\ssigma_h\|_{\Aa}.
\end{equation}
\end{lemma}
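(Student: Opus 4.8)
The plan is to run the same duality argument used in the proof of Theorem~\ref{thm:errorprojsuperconv}, but now exploiting that $\divg(\tsigma-\ssigma_h)=0$ holds \emph{exactly} by~\eqref{2-451}, which will make the term that previously required a separate $O(h^2)$ estimate disappear altogether. First I would introduce the dual problem: find $\tau\in\Sigma$ and $\psi\in V$ solving
\begin{alignat*}{2}
  \Aa\tau &= \epsilon(\psi) \qquad && \text{in }\Omega,\\
  \divg\tau &= P(\tu-\uu_h) \qquad && \text{in }\Omega,\\
  \psi &= 0 \qquad && \text{on }\partial\Omega.
\end{alignat*}
Since $P(\tu-\uu_h)\in[L^2(\Omega)]^3$, the full regularity hypothesis~\eqref{eq:full-reg} (applicable because this is the same elasticity system with the same symmetric compliance tensor $\Aa$) gives $\|\tau\|_{H^1(\Omega)}+\|\psi\|_{H^2(\Omega)}\le C\|P(\tu-\uu_h)\|_{L^2(\Omega)}$.

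Next I would expand $\|P(\tu-\uu_h)\|_{L^2(\Omega)}^2=(P(\tu-\uu_h),\divg\tau)$. Since $\tau\in H^1(\Omega,\mathbb{S})$ and $\divg\tau=P(\tu-\uu_h)\in W_h$, the commuting property~\eqref{commutePiglobalw} applies and lets me replace $\divg\tau$ by $\divg\Pi\tau$; and because $\divg\Pi\tau\in W_h$, the projection $P$ may then be dropped, yielding $(P(\tu-\uu_h),\divg\tau)=(\tu-\uu_h,\divg\Pi\tau)$. Subtracting~\eqref{FEMp1} from~\eqref{Elas1t} with the test function $\Pi\tau\in\Sigma_h$ gives $(\tu-\uu_h,\divg\Pi\tau)=-(\Aa(\tsigma-\ssigma_h),\Pi\tau)$, hence
\[
  \|P(\tu-\uu_h)\|_{L^2(\Omega)}^2
  = -(\Aa(\tsigma-\ssigma_h),\Pi\tau-\tau) - (\Aa(\tsigma-\ssigma_h),\tau).
\]

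The decisive point is that the last term vanishes: by symmetry of $\Aa$ and the first dual equation it equals $(\tsigma-\ssigma_h,\epsilon(\psi))$, and integration by parts (the boundary term dropping because $\psi|_{\partial\Omega}=0$) turns it into $-(\divg(\tsigma-\ssigma_h),\psi)$, which is zero by~\eqref{2-451}. For the surviving term I would use Cauchy--Schwarz, boundedness of $\Aa$, the interpolation estimate~\eqref{Pibound} with $s=1$, and the regularity bound above:
\[
  |(\Aa(\tsigma-\ssigma_h),\Pi\tau-\tau)|
  \le C\|\tsigma-\ssigma_h\|_{\Aa}\,\|\tau-\Pi\tau\|_{L^2(\Omega)}
  \le Ch\,\|\tsigma-\ssigma_h\|_{\Aa}\,\|P(\tu-\uu_h)\|_{L^2(\Omega)},
\]
and dividing by $\|P(\tu-\uu_h)\|_{L^2(\Omega)}$ gives~\eqref{713}. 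I do not anticipate a genuine obstacle here; the only point requiring care is to verify that the dual solution $\tau$ lies in $H^1(\Omega,\mathbb{S})$ \emph{and} has $\divg\tau\in W_h$ at the same time, which is precisely why the datum of the dual problem is taken to be $P(\tu-\uu_h)\in W_h$ rather than $\tu-\uu_h$, and is what legitimizes the application of~\eqref{commutePiglobalw}.
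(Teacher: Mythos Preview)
Your proposal is correct and follows essentially the same duality argument as the paper's proof: the same dual problem, the same use of~\eqref{commutePiglobalw} (enabled by choosing $P(\tu-\uu_h)\in W_h$ as datum), the same error equation, and the same observation that the term $(\Aa(\tsigma-\ssigma_h),\tau)$ vanishes via~\eqref{2-451}. Your closing remark correctly pinpoints the one subtlety that distinguishes this from the proof of Theorem~\ref{thm:errorprojsuperconv}.
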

\begin{proof}
  Let $\tau$ and  $\psi$ solve
\begin{subequations}\label{dualp}
\begin{alignat}{2}
\Aa \tau& = \epsilon (\psi) \qquad && \text{ in } \Omega,  \label{dual1p}\\
\dive \tau& = P(\tu-\uu_h) \qquad  && \text{ in } \Omega, \label{dual2p}\\
\psi& = 0 \qquad && \text{ on }  \partial \Omega. \label{dual3p}
\end{alignat}
\end{subequations}
Since we are assuming full $H^2$-elliptic regularity we have:
\begin{equation}\label{ellipticregp}
\| \tau\|_{H^1(\Omega)}+ \|\psi\|_{H^2(\Omega)} \le C \| P(\tu-\uu_h)\|_{L^2(\Omega)}.
\end{equation}
Then, 
\begin{alignat*}{2}
\| P(\tu-\uu_h)\|_{L^2(\Omega)}^2=  &( P(\tu-\uu_h), \dive  \tau)  \qquad  && \text{(by \eqref{dual2p})} \\
& =  ( P(\tu-\uu_h), \dive  \Pi \tau) \qquad  && \text{(by  \eqref{commutePiglobalw})}  \\
& =  ( \tu-\uu_h, \dive  \Pi \tau) \\ 
& =  -( \mathcal{A}(\tsigma-\ssigma_h), \Pi \tau)  \qquad && \text{(by \eqref{2-ee1})}  \\
& =  -( \mathcal{A}(\tsigma-\ssigma_h), \Pi \tau- \tau)-( \mathcal{A} (\tsigma-\ssigma_h),  \tau).
\end{alignat*}
We can now simplify the last term
\begin{alignat*}{2}
( \mathcal{A} (\tsigma-\ssigma_h),  \tau)& = ( \tsigma-\ssigma_h, \mathcal{A} \tau) \qquad && \text{(by symmetry of  $\Aa$)} \\
& = - (\tsigma-\ssigma_h, \epsilon(\revj{\psi})) \qquad && \text{(by  \eqref{dual1p})} \\
& = ( \dive(\tsigma-\ssigma_h), \revj{\psi}), \qquad && \text{(by integration by parts)}  
\end{alignat*}
which vanishes by \eqref{2-451}. Hence, we obtain 
\begin{equation*}
    \| P(\tu-\uu_h)\|_{L^2(\Omega)}^2=  -( \mathcal{A}(\tsigma-\ssigma_h), \Pi \tau- \tau).
\end{equation*}
The result now follows after using \eqref{Pibound} and \eqref{ellipticregp}.
\end{proof}

%

\begin{corollary}
  \label{cor:pw-const-superconvergence}
Under the same hypothesis of the above theorem we have
\begin{equation}\label{817}
 \| P(u-\uu_h)\|_{L^2(\Omega)} \le C h^2 (\|\sigma\|_{H^2(\Omega)}+ \|\tsigma\|_{H^2(\Omega)}+ \|f\|_{H^1(\Omega)}+ \|u\|_{H^1(\Omega)}).
\end{equation}
If $f \in W_h$ then  
\begin{equation}\label{814}
 \| P(u-\uu_h)\|_{L^2(\Omega)} \le C h^3 (\|\sigma\|_{H^2(\Omega)}+ \|\tsigma\|_{H^2(\Omega)}+ \|f\|_{H^1(\Omega)}+ \|u\|_{H^1(\Omega)}).
\end{equation}
\end{corollary}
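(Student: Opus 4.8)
The plan is to insert the auxiliary solution $\tsigma,\tu$ of~\eqref{Elast} and split $P(u-\uu_h)=P(u-\tu)+P(\tu-\uu_h)$, estimating the two pieces separately. For the first piece I would use Proposition~\ref{prop142} together with $\|P(u-\tu)\|_{L^2(\Omega)}\le\|u-\tu\|_{L^2(\Omega)}$ to get $\|P(u-\tu)\|_{L^2(\Omega)}\le Ch\|f-Pf\|_{L^2(\Omega)}$, and then invoke the standard approximation estimate $\|f-Pf\|_{L^2(\Omega)}\le Ch\|f\|_{H^1(\Omega)}$ --- valid because $W_h$ consists of piecewise constants on the Alfeld refinement and $P$ is the $L^2$-projection onto $W_h$ --- to obtain an $O(h^2\|f\|_{H^1(\Omega)})$ bound. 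For the second piece, Lemma~\ref{lem:aux-pw} already supplies $\|P(\tu-\uu_h)\|_{L^2(\Omega)}\le Ch\|\tsigma-\ssigma_h\|_{\Aa}$, so I only need $\|\tsigma-\ssigma_h\|_{\Aa}=O(h^2\|\tsigma\|_{H^2(\Omega)})$.

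To get that last bound, I would start from the inequality $\|\Pi\tsigma-\ssigma_h\|_{\Aa}\le\|\Pi\tsigma-\tsigma\|_{\Aa}$, which is exactly what the proof of Theorem~\ref{mainerrorthmp} establishes (via $\dive(\tsigma-\ssigma_h)=0$ and the choice $\tau=\Pi\tsigma-\ssigma_h$ in~\eqref{2-ee1}). Then the triangle inequality gives $\|\tsigma-\ssigma_h\|_{\Aa}\le 2\|\tsigma-\Pi\tsigma\|_{\Aa}$, and using the boundedness~\eqref{A-bounded} to pass from $\|\cdot\|_{\Aa}$ to $\|\cdot\|_{L^2(\Omega)}$, followed by~\eqref{Pibound} with $s=2$, yields $\|\tsigma-\ssigma_h\|_{\Aa}\le Ch^2\|\tsigma\|_{H^2(\Omega)}$. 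Assembling the two pieces then produces $\|P(u-\uu_h)\|_{L^2(\Omega)}\le Ch^2\|f\|_{H^1(\Omega)}+Ch^3\|\tsigma\|_{H^2(\Omega)}$, which is majorized by the right-hand side of~\eqref{817}.

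For the sharper estimate~\eqref{814}, the key observation is that $f\in W_h$ forces $Pf=f$, so the auxiliary problem~\eqref{Elast} is identical to~\eqref{Elas}, whence $\tsigma=\sigma$ and $\tu=u$ (which one also reads off directly from the vanishing right-hand side in Proposition~\ref{prop142}). Then $P(u-\uu_h)=P(\tu-\uu_h)$, and plugging $\|\tsigma-\ssigma_h\|_{\Aa}\le Ch^2\|\sigma\|_{H^2(\Omega)}$ into Lemma~\ref{lem:aux-pw} gives the $O(h^3\|\sigma\|_{H^2(\Omega)})$ bound, hence~\eqref{814}.

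There is no genuinely hard step here --- the statement is essentially a bookkeeping exercise on top of Proposition~\ref{prop142}, Lemma~\ref{lem:aux-pw}, and Theorem~\ref{mainerrorthmp}. The one place to be careful is identifying which term carries the leading order in~\eqref{817}: it is the consistency error $\|f-Pf\|_{L^2(\Omega)}$ arising from replacing $f$ by its projection in the auxiliary problem, and recognizing that this single term is exactly what drops out when $f\in W_h$, thereby improving the rate from $O(h^2)$ to $O(h^3)$.
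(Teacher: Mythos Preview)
Your proposal is correct and follows essentially the same route as the paper: split via the auxiliary problem, apply Proposition~\ref{prop142} to control $P(u-\tu)$, apply Lemma~\ref{lem:aux-pw} together with the stress estimate from Theorem~\ref{mainerrorthmp} to control $P(\tu-\uu_h)$, and observe that $f\in W_h$ collapses the auxiliary problem onto the original. The paper's own proof is a one-line citation of these same three ingredients, and your write-up simply spells out the bookkeeping.
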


\begin{proof}
The estimate \eqref{817} follows from \eqref{713} and Theorem \ref{mainerrorthmp} and Proposition \ref{prop142}. If $f \in W_h$ then $u=\tu$ and $\sigma=\tsigma$ hence the result follows also from \eqref{713} and Theorem \ref{mainerrorthmp} and Proposition \ref{prop142}.
\end{proof}

\begin{theorem}
  \label{thm:const-displ-incomp}
    Assume that \eqref{A-incompressible} \revj{holds}. Let $\sigma, u$ solve \eqref{Elas}, and $\ssigma_h, \uu_h$ solve \eqref{FEMp}. Then, 
    \begin{align*}
      \|\sigma-\ssigma_h\|_{L^2(\Omega)} \le C ( \|\sigma-\ssigma_h\|_{\Aa}
      + h 
       \|f  - P f \|_{L^2(\Omega)}).
    \end{align*}
\end{theorem}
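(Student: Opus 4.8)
The plan is to mimic the proof of Theorem~\ref{thm:ee-robust} closely, using Remark~\ref{rem:incomproofcritical}: the only ingredient needed is an analogue of the key identity \eqref{divsig}, and here \eqref{2-451} (i.e.\ $\dive(\tsigma-\ssigma_h)=0$) plays that role, together with the auxiliary problem \eqref{Elast} whose solution $\tsigma$ satisfies $\dive\tsigma = Pf$. First I would observe that choosing $\tau_h=\mathbb{I}$ in \eqref{Elas1t} and \eqref{FEMp1} gives $\int_\Omega \tr(\tsigma-\ssigma_h)=0$, and similarly $\int_\Omega\tr(\sigma-\tsigma)=0$ from \eqref{Elas1} and \eqref{Elas1t}, so $\int_\Omega \tr(\sigma-\ssigma_h)=0$. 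Hence there is a $\phi\in[H_0^1(\Omega)]^3$ with $\dive\phi = \tr(\sigma-\ssigma_h)$ and $\|\phi\|_{H^1(\Omega)}\le C\|\tr(\sigma-\ssigma_h)\|_{L^2(\Omega)}$.

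Next I would run the same chain of identities as in \eqref{eq:tr-id}: write $\|\tr(\sigma-\ssigma_h)\|_{L^2(\Omega)}^2 = (\tr(\sigma-\ssigma_h)\mathbb{I},\epsilon(\phi)) = 3(\sigma-\ssigma_h,\epsilon(\phi)) - 3(\dev(\sigma-\ssigma_h),\epsilon(\phi)) = -3(\dive(\sigma-\ssigma_h),\phi) - 3(\dev(\sigma-\ssigma_h),\epsilon(\phi))$. The new point is the treatment of the first term: since $\dive\sigma = f$ and $\dive\ssigma_h = Pf$ (from \eqref{FEMp2}, because $\dive\ssigma_h\in W_h$), we get $\dive(\sigma-\ssigma_h) = f - Pf$, which is $L^2$-orthogonal to $W_h$; therefore $(\dive(\sigma-\ssigma_h),\phi) = (f-Pf,\phi-Q_W\phi)$ where $Q_W$ is the $L^2$-projection onto $W_h$, and this is bounded by $Ch\|f-Pf\|_{L^2(\Omega)}\|\phi\|_{H^1(\Omega)}$ by the approximation property of $W_h$ and an element-wise Poincaré inequality. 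The second term $(\dev(\sigma-\ssigma_h),\epsilon(\phi))$ is bounded by $C\|\sigma-\ssigma_h\|_{\Aa}\|\phi\|_{H^1(\Omega)}$ using \eqref{A-incompressible} exactly as before.

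Combining these with the bound $\|\phi\|_{H^1(\Omega)}\le C\|\tr(\sigma-\ssigma_h)\|_{L^2(\Omega)}$ and cancelling one power of $\|\tr(\sigma-\ssigma_h)\|_{L^2(\Omega)}$ yields $\|\tr(\sigma-\ssigma_h)\|_{L^2(\Omega)} \le C(\|\sigma-\ssigma_h\|_{\Aa} + h\|f-Pf\|_{L^2(\Omega)})$. Finally, the decomposition $\sigma-\ssigma_h = \dev(\sigma-\ssigma_h) + \tfrac13\tr(\sigma-\ssigma_h)\mathbb{I}$ together with $\|\dev(\sigma-\ssigma_h)\|_{L^2(\Omega)}\le C\|\sigma-\ssigma_h\|_{\Aa}$ (again from \eqref{A-incompressible}) gives the claimed estimate. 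I do not expect a serious obstacle here; the only place requiring a little care is making sure that the relevant divergence difference $f-Pf$ (rather than $\dive(\sigma-\Pi\sigma)$ as in Theorem~\ref{thm:ee-robust}) is what appears, which is precisely why the statement has $\|f-Pf\|_{L^2(\Omega)}$ on the right-hand side; this traces back to using \eqref{FEMp2} with $W_h$-test functions and $\dive\ssigma_h\in W_h$. One should also double-check the sign bookkeeping in the integration-by-parts steps, but that is routine.
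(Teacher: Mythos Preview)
Your argument is correct, and in fact more direct than the paper's. The paper does not work with $\sigma-\ssigma_h$ in one shot; instead it splits via the auxiliary solution $\tsigma$ of \eqref{Elast}, first repeating the proof of Theorem~\ref{thm:ee-robust} with $(\tsigma,\ssigma_h)$ in place of $(\sigma,\sigma_h)$ (using \eqref{eq:2-451c} as the analogue of \eqref{divsig}) to bound $\|\tsigma-\ssigma_h\|_{L^2(\Omega)}$, and then separately estimating $\|\sigma-\tsigma\|_{L^2(\Omega)}$ by another trace argument (where $\dive(\sigma-\tsigma)=f-Pf$ enters). The two pieces are then reassembled by the triangle inequality, with Proposition~\ref{prop142} used implicitly to replace $\|\sigma-\tsigma\|_{\Aa}$ and $\|\tsigma-\ssigma_h\|_{\Aa}$ by $\|\sigma-\ssigma_h\|_{\Aa}+h\|f-Pf\|_{L^2(\Omega)}$.

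You short-circuit this by observing that $\dive\ssigma_h\in W_h$ together with \eqref{FEMp2} forces $\dive\ssigma_h=Pf$, so $\dive(\sigma-\ssigma_h)=f-Pf$ directly, and the orthogonality $(f-Pf,P\phi)=0$ gives the $h$-factor immediately. This is cleaner and avoids the detour through $\tsigma$ entirely. A small presentational remark: your opening paragraph invokes the auxiliary problem \eqref{Elast} and \eqref{2-451}, but your actual execution never uses them; you could simply drop that setup and state the key identity $\dive(\sigma-\ssigma_h)=f-Pf$ from the start.
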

\begin{proof}
  The proof of Theorem~\ref{thm:ee-robust}
  can be repeated with $\sigma$ and $\sigma_h$ replaced by $\tsigma$ and
  $\ssigma_h$, respectively, since the analogue of the critical
  ingredient there (see Remark~\ref{rem:incomproofcritical}) is available
  and given by \eqref{eq:2-451c}. Then we obtain an
  analogue of \eqref{error:sigma-incompressible}, 
  \begin{align}
    \label{eq:ssig-hatsig}
    C \|\tsigma-\ssigma_h\|_{L^2(\Omega)}
    & \le  \|\tsigma-\ssigma_h\|_{\Aa} +
      h \|\div (\tsigma - \Pi \tsigma) \|_{L^2(\Omega)}.
  \end{align}
  The last term vanishes by~\eqref{commutePiglobalw}.

  To complete the proof,  it suffices to estimate $\| \sigma - \tsigma\|_{L^2(\om)}$. 
  Let $\phi \in [H_0^1(\Omega)]^3$ 
  be as in  \eqref{eq:phi-tr} with $\sigma_h$ replaced by $\tsigma$.
  Then, just as in~\eqref{eq:tr-id}, we obtain
  \begin{align*}
    \| \tr(\sigma - \tsigma) \|_{L^2(\om)}
    & 
      = 3 (\dive( \sigma - \tsigma), \phi) -
      3 (\dev(\sigma - \tsigma), \epsilon(\phi)).
  \end{align*}
  Now, $(\dive( \sigma - \tsigma), \phi) = (f - Pf, \phi - P\phi)$,
  and by  \eqref{A-incompressible},  $ \| \dev(\sigma - \tsigma) \| \le C \| \sigma - \tsigma \|_{\Aa}$. Hence we obtain
  \[
    C \| \sigma - \tsigma \|_{L^2(\om)} \le  \| \sigma - \tsigma\|_{\Aa}
    + h\| f - Pf \|_{L^2(\om)}.
  \]
  The proof is finished by combining this with \eqref{eq:ssig-hatsig}.
\end{proof}

To summarize the results in this section, 
we have shown (in Theorem~\ref{mainerrorthmp}) optimal error estimates for both
$\ssigma_h$ and $\uu_h$ when $\uu_h$ is piecewise constant,
superconvergence of order $O(h^2)$ for $P(u - \uu_h)$ for general data
$f$, and superconvergence of order $O(h^3)$ for $P(u - \uu_h)$ when
$f \in W_h$ (in Corollary~\ref{cor:pw-const-superconvergence}).
\revj{The improvements in displacement error when $f$ is in $W_h$
  are akin to prior results of~\cite[Theorem~4.1]{stenberg1988elasticity} and
\cite[Theorem~2.1]{stenberg1991postprocessing}.}
We
have also shown (in Theorem~\ref{thm:const-displ-incomp}) the robustness of the method in the incompressible limit.

\section{A reduced space pair}
\label{sec:reduced-element}

In this section we present a finite element method obtained by
reducing certain judiciously chosen dimensions of the JKM element pair.
We replace the space $V_h$ by the space of element-wise rigid displacements
and reduce  $\Sigma_h$  accordingly.  The resulting method has optimal error estimates. It is
also robust for nearly incompressible materials (a property that
we cannot prove for an even further reduced pair of spaces given in
Appendix~\ref{sec:2nd-reduced-element}).
\revj{The idea of using
  element-wise rigid displacements as the displacement space
  can also be found in previous works such as~\cite{PitkaStenb83} and \cite{christiansen2019finite}.}

On a facet with normal
$n$ we let $v_t = n \times ( v \times n)$ for vector fields $v$, while
for matrix fields $\og$, we let $\og_{nn} = \og n \cdot n$ and
$\og_{nt} = n \times ( \og n \times n) \equiv (\og n )_t$.
Define
\begin{alignat*}{3}
  \Sigma_h^R(T)
  & :=\{ \omega \in \Sigma_h(T)
  & : \; &  \dive \omega \in P_T \RM(T),\;
        \og_{nt} \in \RM(F),
        {\text{ for all }} F \in \triangle_2(T)\},\\
  V_h^R(T)  & := \RM(T).
\end{alignat*}
Here $\RM(F)$ is the space of rigid body motions defined on $F$.
We see that there are $18=6+4 \times 3$ constraints imposed on $\Sigma_h(T)$ to obtain $\Sigma_h^R(T)$. The corresponding global spaces are denoted by $\Sigma_h^R$ and $V_h^R$.

\begin{theorem} \label{thm:ee-reduced}
An element $\omega \in  \Sigma_h^R(T)$  is uniquely determined by the following dofs:
\begin{subequations}
\label{SigmaRh}
\begin{alignat}{4}
  \label{SigmaRh1}
    &\int_F \omega_{nn} \, \kappa \, {\ds}, \qquad &&\kappa\in \pol_1(F), F \in \triangle_2(T), \qquad && \dofcnt{(12 dofs)} 
    \\ 
    &\int_F  \og_{n t} \cdot \kappa  \, {\ds}, \qquad &&  \kappa\in \RM(F), F \in \triangle_2(T), \qquad && \dofcnt{(12 dofs).} \label{SigmaRh2}
\end{alignat}
\end{subequations}
\end{theorem}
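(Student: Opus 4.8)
The plan is to first check that the number of proposed degrees of freedom in~\eqref{SigmaRh} matches $\dim \Sigma_h^R(T)$, and then prove injectivity of the evaluation map, i.e.\ that an $\omega \in \Sigma_h^R(T)$ annihilated by all dofs in~\eqref{SigmaRh} must vanish. For the dimension count, I would start from $\dim \Sigma_h(T) = 42$ (Theorem~\ref{thmdofs1}) and argue that the $18$ constraints defining $\Sigma_h^R(T)$ — namely $\dive\omega \in P_T\RM(T)$ (which, since $\dive\Sigma_h(T) = W_h(T)$ and $\dim W_h(T) - \dim P_T\RM(T) = 12 - 6 = 6$, cuts $6$ dimensions) and $\omega_{nt} \in \RM(F)$ on each of the four faces (each cutting $\dim[\pol_1(F)]^2 - \dim\RM(F) = 6 - 3 = 3$ dimensions, for $12$ total) — are independent, so that $\dim\Sigma_h^R(T) = 42 - 18 = 24$, which equals the $12 + 12$ dofs listed. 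The independence of these constraints is most cleanly obtained \emph{a posteriori} from the injectivity argument together with the surjectivity of $\dive: \Sigma_h(T) \to W_h(T)$ and of the face traces, so I would organize the proof to deduce $\dim\Sigma_h^R(T) \ge 24$ from a surjectivity/construction step and $\dim\Sigma_h^R(T) \le 24$ from the injectivity step, exactly paralleling the structure of the proof of Theorem~\ref{thmdofs1}.

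For the injectivity step, let $\omega \in \Sigma_h^R(T)$ with all dofs in~\eqref{SigmaRh} vanishing. First I claim $\omega n|_{\partial T} = 0$: on each face $F$, the normal component $\omega n$ splits as $\omega_{nn}\, n + \omega_{nt}$; the dofs~\eqref{SigmaRh1} force $\omega_{nn} = 0$ since $\omega_{nn}|_F \in \pol_1(F)$ is tested against all of $\pol_1(F)$, and the dofs~\eqref{SigmaRh2} force $\omega_{nt} = 0$ since, by the defining constraint of $\Sigma_h^R(T)$, $\omega_{nt}|_F \in \RM(F)$ is tested against all of $\RM(F)$. Hence $\omega n = 0$ on $\partial T$. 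Next, integration by parts as in~\eqref{eq:1-div-om} gives $\int_T \dive\omega \cdot v = -\int_T \omega : \epsilon(v)$ for $v \in V_h^R(T) = \RM(T)$; but $\epsilon(v) = 0$ for rigid displacements, so $\int_T \dive\omega \cdot v = 0$ for all $v \in \RM(T)$, which means $\dive\omega \perp P_T\RM(T)$ in $L^2(T)$. Combined with the constraint $\dive\omega \in P_T\RM(T)$ built into $\Sigma_h^R(T)$, this yields $\dive\omega = 0$. Now $\omega \in \Sigma_h(T)$ with $\dive\omega = 0$ and $\omega n|_{\partial T} = 0$, so Lemma~\ref{Sigmazero} gives $\omega = 0$. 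This proves the dofs are unisolvent provided the dimension count checks out.

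To close the dimension count, I would exhibit enough elements of $\Sigma_h^R(T)$ to force $\dim\Sigma_h^R(T) \ge 24$, or equivalently show the $24$ functionals in~\eqref{SigmaRh} are linearly independent on $\Sigma_h^R(T)$; the cleanest route is to show that for any prescribed values of the dofs there exists a matching $\omega$. Using the surjectivity of $\Pi_T$-type arguments: given target face data $\omega_{nn}|_F \in \pol_1(F)$ and $\omega_{nt}|_F \in \RM(F)$ consistent across the construction, one builds $\omega n$ on $\partial T$ lying in the appropriate trace space, extends to an element of $\BDM_1(\Ta)$ with that normal trace, projects onto $\Sigma_h(T)$ (subtracting a $\pol_1(\Ta,\K)$-component), and finally corrects the divergence by adding a divergence-free, zero-normal-trace stress to land in $\{\dive\omega \in P_T\RM(T)\}$ — the correction space being nonempty is exactly where Lemma~\ref{Sigmazero} and the surjectivity of $\dive$ onto $W_h(T)$ (hence onto $P_T\RM(T)$) enter. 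The main obstacle I anticipate is verifying that the face-trace constraint $\omega_{nt} \in \RM(F)$ is compatible with membership in $\Sigma_h(T)$ — i.e.\ that restricting to $\RM(F)$ on each face genuinely removes exactly $3$ dimensions per face and not more, with no hidden interaction among the four faces or with the interior dofs~\eqref{Sigmah2}. I would handle this by the rank-nullity bookkeeping: injectivity (proved above) gives $\dim\Sigma_h^R(T) \le 24$, and the explicit construction gives $\ge 24$, so equality holds and all $18$ constraints are independent automatically.
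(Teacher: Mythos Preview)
Your injectivity argument is correct and essentially matches the paper's: both establish $\omega n|_{\partial T}=0$ from the vanishing dofs and the built-in face constraint $\omega_{nt}\in\RM(F)$, then use integration by parts together with $\epsilon(v)=0$ for $v\in\RM(T)$ and the constraint $\dive\omega\in P_T\RM(T)$ to force $\dive\omega=0$, whence Lemma~\ref{Sigmazero} finishes.

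Where you overcomplicate things is the lower bound $\dim\Sigma_h^R(T)\ge 24$. You propose an explicit construction---extending prescribed face data into $\BDM_1(\Ta)$, symmetrizing, correcting the divergence---and worry about hidden dependencies among the face constraints. None of this is needed. The paper simply observes that $\Sigma_h^R(T)$ is cut out of the $42$-dimensional space $\Sigma_h(T)$ by $18$ linear functionals (six from the divergence constraint, three per face from the tangential-trace constraint), so $\dim\Sigma_h^R(T)\ge 42-18=24$ by elementary rank--nullity, with no need to verify independence of the constraints beforehand. Combined with your injectivity step (which gives $\dim\le 24$), this yields $\dim=24$ and unisolvency immediately. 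The independence of the $18$ constraints then follows \emph{a posteriori}, as you in fact note at the very end---you had the right idea there but paired it with the wrong source for the lower bound.
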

\begin{proof}
We see that $\dim \Sigma_h^R(T) \ge \dim \Sigma_h(T)- 18=24$ which is exactly the number of dofs in \eqref{SigmaRh}. Now suppose that $\omega \in \Sigma_h^R(T)$ and the  dofs \eqref{SigmaRh1}, \eqref{SigmaRh2} vanish. Then,  obviously,   $\omega \bn =0$ on $\partial T$.

By the definition of the reduced stress space, there is a  $v \in \RM(T)$
such that $P_T v= \dive \omega$. Combined with the vanishing trace $\omega \bn =0$ on $\partial T$, 
\begin{alignat*}{1}
\|\dive \omega\|_{L^2(T)}^2 & =  \int_T \dive \omega \cdot P_T v =\int_T \dive \omega \cdot v  \\
& =  \int_{\partial T} \omega n \cdot v  =  0.
\end{alignat*}
 This shows that $\dive \omega=0$. Then by Lemma~\ref{Sigmazero},  $\omega$ must  vanish. 
\end{proof}

The canonical interpolant of the dofs in~\eqref{SigmaRh}, denoted by 
$\Pi^R_T \omega \in \Sigma_h^R(T)$, satisfies 
\begin{subequations}
\label{PiR}
\begin{alignat}{4}
&\int_F (\Pi^R_T \omega)_{nn} \,  \kappa \, {\ds}= \int_F  \omega_{nn} \, \kappa \, {\ds} , \qquad &&\kappa\in \pol_1(F), F \in \triangle_2(T), \label{Pi1R}\\
&\int_F  (\Pi^R_T \omega)_{nt} \cdot \kappa  \, {\ds}= \int_F  \omega_{nt} \cdot \kappa  \, {\ds} \qquad && \kappa\in \RM(F), F \in \triangle_2(T),  \label{Pi2R}
\end{alignat}
\end{subequations}
and we denote  the corresponding global projection by $\Pi^R$.

\begin{lemma}
For any $\og \in D_\Pi$, 
\begin{equation}\label{commutePiglobalR}
 (\dive( \Pi^R \omega -\omega),  v) =0 \qquad {\text{ for all }} v \in V_h^R.
\end{equation}
\end{lemma}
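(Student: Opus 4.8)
The plan is to reduce~\eqref{commutePiglobalR} to an element-by-element identity and then integrate by parts, using crucially that functions in $V_h^R$ are element-wise rigid. Since $\Pi^R\omega$ and $\omega$ both belong to $H(\dive,\Omega,\SSS)$, the $L^2(\Omega)$ pairing in~\eqref{commutePiglobalR} splits over $T\in\Th$, so it suffices to show that
\[
  \int_T \dive(\Pi^R_T\omega-\omega)\cdot v = 0
  \qquad\text{for every } v\in\RM(T)\text{ and } T\in\Th .
\]
Fix such a $T$ and $v$. Because $\omega|_T\in H(\dive,T,\SSS)$ with $\omega n|_F\in[L^2(F)]^3$ for each $F\in\triangle_2(T)$ (as $\omega\in D_\Pi$), and $\Pi^R_T\omega$ is polynomial, integration by parts applies and yields
\[
  \int_T \dive(\Pi^R_T\omega-\omega)\cdot v
  = -\int_T (\Pi^R_T\omega-\omega):\epsilon(v)
    + \sum_{F\in\triangle_2(T)}\int_F \big((\Pi^R_T\omega-\omega)n\big)\cdot v ,
\]
where we used symmetry of the stress fields. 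Since $v\in\RM(T)$ we have $\epsilon(v)=0$, so the volume term drops and only the face terms remain.

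Next I would treat each face term. On $F\in\triangle_2(T)$ with unit normal $n$, splitting both factors into normal and tangential parts gives
\[
  \big((\Pi^R_T\omega-\omega)n\big)\cdot v
  = (\Pi^R_T\omega-\omega)_{nn}\,(v\cdot n) + (\Pi^R_T\omega-\omega)_{nt}\cdot v_t
  \qquad\text{on } F .
\]
The point is that the trace on $F$ of a three-dimensional rigid displacement $v(x)=a+b\times x$ has normal component $v\cdot n = a\cdot n + b\cdot(x\times n)$, which is affine in $x$ and hence lies in $\pol_1(F)$, and tangential component which (since $F$ is planar, so $x\cdot n$ is constant on it) equals $c_0+(b\cdot n)(n\times x)$ for a constant tangential vector $c_0$, i.e.\ a rigid motion of the plane of $F$, so $v_t|_F\in\RM(F)$. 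Granting this, the defining relations~\eqref{Pi1R} and~\eqref{Pi2R} of $\Pi^R_T$ say precisely that $(\Pi^R_T\omega-\omega)_{nn}$ is $L^2(F)$-orthogonal to $\pol_1(F)$ and $(\Pi^R_T\omega-\omega)_{nt}$ is $L^2(F)$-orthogonal to $\RM(F)$, so both terms above integrate to zero over $F$. Summing over the faces of $T$, and then over $T\in\Th$, proves~\eqref{commutePiglobalR}.

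The routine parts are the integration by parts and the matching against the degrees of freedom in~\eqref{SigmaRh}; the one step that deserves care—and the only real obstacle—is the geometric claim that restricting a 3D rigid displacement to a planar face produces an affine normal component and a 2D-rigid tangential component. This can be verified by the short coordinate computation sketched above (taking the origin on $F$ removes the constant $c_0$), or, more conceptually, by noting that the tangential trace of a rigid motion has vanishing surface strain while its normal component is the restriction of an affine function. I would use the coordinate computation since it is the quickest.
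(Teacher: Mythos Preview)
Your proof is correct and follows essentially the same approach as the paper's: integrate by parts on each element, drop the volume term because $\epsilon(v)=0$, split the boundary integrand into normal and tangential parts, and match against the dofs~\eqref{Pi1R}--\eqref{Pi2R}. The paper is terser---it simply asserts $v_t\in\RM(F)$ without the coordinate computation you supply---but the argument is the same.
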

\begin{proof}
Let $v \in \RM(T)$. Then, 
\begin{alignat*}{2}
(\dive( \Pi^R \omega -\omega), v)_T
& =  \int_{\partial T } (\Pi^R \omega -\omega) \bn \cdot v   \\
& =  \int_{\partial T } (\Pi^R \omega -\omega)_{nn}   \, v \cdot \bn+ \int_{\partial T } (\Pi^R \omega -\omega)_{nt} \cdot v_t  =  0,
\end{alignat*}
since  $v_t \in \RM(F)$. 
\end{proof}

The following two results are similar to their previous counterparts and
their proofs are omitted.
\begin{proposition} 
 For all $\og \in H^1(\om, \mathbb S), $
    \begin{alignat}{2} \label{PiboundR}
      \| \omega-\Pi^R \omega\|_{L^2(\Omega)}+ h \|\dive(\omega-\Pi^R \omega)\|_{L^2(\Omega)} & \le   C h \|\omega\|_{H^1(\Omega)}.
    \end{alignat}
\end{proposition}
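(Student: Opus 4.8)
The plan is to mimic the proof of the analogous bound~\eqref{Pibound} for the full JKM interpolant, now using the reduced interpolant $\Pi^R$. The statement follows a standard Bramble--Hilbert / scaling argument once two ingredients are in place: (i) a local estimate on a reference element showing $\|\omega - \Pi^R_{\hat T}\omega\|_{L^2(\hat T)} \le C\|\omega\|_{H^1(\hat T)}$, obtained from boundedness of the dofs in~\eqref{SigmaRh} on $H^1(\hat T, \SSS)$ and the polynomial-reproduction property $\Pi^R_{\hat T}\omega = \omega$ whenever $\omega$ is constant (indeed for constant $\omega$, all facet traces $\omega_{nn}, \omega_{nt}$ are reproduced, and by unisolvency $\Pi^R_{\hat T}\omega = \omega$); and (ii) the commuting relation~\eqref{commutePiglobalR}, which controls $\dive(\omega - \Pi^R\omega)$ in terms of the range of $\dive$ on $\Sigma_h^R$.

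First I would verify that the reduced dofs are well-defined and bounded on $H^1(T, \SSS)$: by the trace theorem, $\omega_{nn}|_F$ and $\omega_{nt}|_F$ lie in $L^2(F)$ with norms controlled by $\|\omega\|_{H^1(T)}$, so each functional in~\eqref{SigmaRh} is continuous. Next, on a fixed reference simplex $\hat T$, since $\Pi^R_{\hat T}$ is a projection onto a finite-dimensional space and reproduces $\SSS$-valued constants, the usual Bramble--Hilbert argument gives $\|\omega - \Pi^R_{\hat T}\omega\|_{L^2(\hat T)} \le C\,|\omega|_{H^1(\hat T)}$. Scaling to a general $T \in \Th$ (using shape-regularity and the affine equivalence of the dofs, which only involve facet integrals against $\pol_1(F)$ and $\RM(F)$) produces the local bound $\|\omega - \Pi^R_T\omega\|_{L^2(T)} \le C h_T \|\omega\|_{H^1(T)}$, and summing over $T$ yields the first term in~\eqref{PiboundR}.

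For the divergence term, I would note that $\dive(\omega - \Pi^R\omega)$ is not piecewise polynomial in general, so one cannot simply invoke~\eqref{commutePiglobalR} directly; instead, on each $T$, write $\dive(\omega - \Pi^R_T\omega) = \dive\omega - \dive\Pi^R_T\omega$, and bound $\|\dive\Pi^R_T\omega\|_{L^2(T)}$ via an inverse inequality by $C h_T^{-1}\|\Pi^R_T\omega\|_{L^2(T)} \le C h_T^{-1}(\|\omega\|_{L^2(T)} + h_T\|\omega\|_{H^1(T)})$, combined with $\|\dive\omega\|_{L^2(T)} \le \|\omega\|_{H^1(T)}$; more carefully, to get the optimal $O(h)$ rate one should instead use that $\Pi^R_T$ reproduces not just constants but enough of $\pol_1$ on the divergence (since $\dive\Sigma_h^R(T) \subseteq P_T\RM(T)$ and the dofs encode the relevant moments) so that $\dive(\omega - \Pi^R_T\omega)$ has a vanishing $P_T\RM(T)$-component whenever $\dive\omega \in W_h$; for general $\omega \in H^1$, a standard argument writing $\dive\Pi^R_T\omega = P_T\dive\omega + \text{(lower order)}$ together with~\eqref{commutePiglobalR} gives $\|\dive(\omega - \Pi^R\omega)\|_{L^2(\Omega)} \le C\|\omega\|_{H^1(\Omega)}$, which is the $s=1$ version of what we need.

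The main obstacle is the divergence estimate: one must be careful that $\Pi^R_T$, unlike $\Pi_T$, only captures $\dive\omega$ through its $P_T\RM(T)$-component (not all of $W_h(T)$), so the clean identity $\dive\Pi_T\omega = P_T\dive\omega$ of Section~\ref{sec:JM-element} is replaced by the weaker $\dive\Pi^R_T\omega = P_T^\RM\dive\omega$ (up to verifying this from~\eqref{PiR} via integration by parts against $\RM(T)$). Establishing that identity precisely, and then deducing $\|\dive(\omega - \Pi^R\omega)\|_{L^2(\Omega)} \le C\|\omega\|_{H^1(\Omega)}$ by combining it with the approximation property of $P_T^\RM$ and an inverse estimate on the remaining part, is the technical heart of the proof; everything else is routine scaling. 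Since the statement asserts only the $s=1$ estimate (consistent with $\Sigma_h^R$ containing only a reduced portion of $\pol_1$), no higher-order reproduction is needed, which keeps the argument short — hence the remark in the paper that the proof is omitted.
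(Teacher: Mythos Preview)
Your treatment of the $L^2$ term is correct and is exactly the intended argument: boundedness of the dofs on $H^1$, reproduction of constants, Bramble--Hilbert on a reference element, then scaling.

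However, you overcomplicate the divergence term because of a misreading of the target rate. The statement asks for
\[
  h\,\|\dive(\omega-\Pi^R\omega)\|_{L^2(\Omega)} \le C\,h\,\|\omega\|_{H^1(\Omega)},
\]
i.e.\ only the $O(1)$ bound $\|\dive(\omega-\Pi^R\omega)\|_{L^2(\Omega)} \le C\,\|\omega\|_{H^1(\Omega)}$. This follows from the \emph{same} Bramble--Hilbert argument you already used: on the reference element, $\hat\omega \mapsto \dive(\hat\omega - \Pi^R_{\hat T}\hat\omega)$ is a bounded linear map $H^1(\hat T,\SSS)\to L^2(\hat T)$ (since $\|\dive\Pi^R_{\hat T}\hat\omega\|_{L^2(\hat T)}$ is controlled by any norm on the finite-dimensional range, hence by the dofs, hence by $\|\hat\omega\|_{H^1(\hat T)}$) and it annihilates constants; so Bramble--Hilbert gives $\|\dive(\hat\omega - \Pi^R_{\hat T}\hat\omega)\|_{L^2(\hat T)} \le C\,|\hat\omega|_{H^1(\hat T)}$, and scaling produces exactly the required bound. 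No commuting relation is needed here --- \eqref{commutePiglobalR} is used later for the inf-sup proof, not for this approximation estimate.

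Two of your auxiliary claims are also not correct as stated and should be dropped. First, the ``clean identity $\dive\Pi_T\omega = P_T\dive\omega$'' does \emph{not} hold for general $\omega\in H^1$ in Section~\ref{sec:JM-element}: from \eqref{commutePi} one only gets $(\dive(\Pi_T\omega-\omega),v)_T=0$ for $v\in V_h(T)$, and via \eqref{ITinner} this becomes $(\dive\Pi_T\omega,w)_T=(\dive\omega,I_Tw)_T$ for $w\in W_h(T)$, which is not $(\dive\omega,w)_T$ unless $\dive\omega\in W_h(T)$ (that special case is \eqref{commutePiglobalw}). Second, and for the same reason, the analogous identity $\dive\Pi^R_T\omega = P_T^\RM\dive\omega$ that you propose as ``the technical heart'' is not immediate either: \eqref{commutePiglobalR} tests against $\RM(T)$, whereas $P_T^\RM$ projects with respect to $P_T\RM(T)$, and these do not match up for general $\dive\omega$. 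Fortunately, none of this machinery is needed --- the entire proposition is a one-line scaling argument once you observe that only the $s=1$ rate is claimed.
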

\begin{theorem}  
    There exists a constant $\beta>0$ such that 
   \[
      \beta \le \inf_{0\not = v\in V^R_h} \sup_{0 \neq \omega \in \Sigma_h^R}  \frac{( \dive \omega, v)}{\|\omega\|_{H(\dive,\Omega)} \|v\|_{L^2(\Omega)} } . 
    \]
\end{theorem}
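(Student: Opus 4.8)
The plan is to mimic the argument for Theorem~\ref{thm:inf-sup-JM}, replacing the canonical interpolant $\Pi$ and its commuting property by the reduced interpolant $\Pi^R$ and~\eqref{commutePiglobalR}. Fix $v \in V_h^R$. Since $V_h^R \subset [L^2(\Omega)]^3$, Lemma~\ref{lem:inf-sup} supplies a $\tau \in H^1(\Omega, \SSS)$ with $\dive \tau = v$ and $\|\tau\|_{H^1(\Omega,\SSS)} \le C \|v\|_{L^2(\Omega)}$. Because $H^1(\Omega, \SSS) \subset D_\Pi$, the function $\omega := \Pi^R \tau$ is well defined (unisolvency from Theorem~\ref{thm:ee-reduced}) and lies in $\Sigma_h^R$.

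First I would test~\eqref{commutePiglobalR} against $v$ itself, which is legitimate since $v \in V_h^R$, to obtain $(\dive \omega, v) = (\dive \tau, v) = \|v\|_{L^2(\Omega)}^2$. Next I would bound $\|\omega\|_{H(\dive,\Omega)}$ using the triangle inequality together with~\eqref{PiboundR}: one gets $\|\omega\|_{L^2(\Omega)} \le \|\tau\|_{L^2(\Omega)} + \|\omega - \tau\|_{L^2(\Omega)} \le C\|\tau\|_{H^1(\Omega)}$ and $\|\dive \omega\|_{L^2(\Omega)} \le \|\dive\tau\|_{L^2(\Omega)} + \|\dive(\omega-\tau)\|_{L^2(\Omega)} \le C \|\tau\|_{H^1(\Omega)} + \|v\|_{L^2(\Omega)}$, which combined with the stability bound of Lemma~\ref{lem:inf-sup} yields $\|\omega\|_{H(\dive,\Omega)} \le C \|v\|_{L^2(\Omega)}$. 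Dividing, $\sup_{0\neq\eta\in\Sigma_h^R}\frac{(\dive\eta,v)}{\|\eta\|_{H(\dive,\Omega)}} \ge \frac{(\dive\omega,v)}{\|\omega\|_{H(\dive,\Omega)}} \ge C^{-1}\|v\|_{L^2(\Omega)}$, so $\beta = C^{-1}$ works.

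As for difficulties: there is essentially no serious obstacle beyond checking that the reduced Fortin-type operator $\Pi^R$ simultaneously enjoys (i) the commuting relation~\eqref{commutePiglobalR} on the \emph{reduced} displacement space $V_h^R$ and (ii) the boundedness/approximation estimate~\eqref{PiboundR}; both are already in hand from the lemma and proposition preceding the statement. The one point deserving a line of care is that $\Pi^R$ is applied to the $H^1$-regular right inverse $\tau$ furnished by Lemma~\ref{lem:inf-sup}, not to a merely $H(\dive)$-regular field; this regularity is exactly what makes $\omega=\Pi^R\tau$ admissible and the estimate~\eqref{PiboundR} applicable, which is why it is crucial that Lemma~\ref{lem:inf-sup} delivers a field in $H^1(\Omega,\SSS)$ rather than just in $\Sigma$.
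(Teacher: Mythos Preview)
Your proposal is correct and follows precisely the approach the paper intends: the paper omits the proof, stating that the result is ``similar to [its] previous counterpart,'' namely Theorem~\ref{thm:inf-sup-JM}, and your argument is exactly the natural adaptation of that proof with $\Pi$, \eqref{commutePiglobal}, and \eqref{Pibound} replaced by $\Pi^R$, \eqref{commutePiglobalR}, and \eqref{PiboundR}.
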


{\bf{The mixed method with the reduced space pair}} finds $\sigma^R_h \in \Sigma^R_h$ and $u^R_h \in V^R_h$ satisfying
\begin{subequations}\label{FEMR}
\begin{alignat}{2}
(\Aa \sigma^R_h, \tau)+(u^R_h, \dive \tau)& =  0 \qquad && {\text{ for all }} \tau \in \Sigma^R_h, \label{FEM1R}\\
(\dive \sigma^R_h, v)& =  (f,v) \qquad  && {\text{ for all }} v \in V^R_h. \label{FEM2R}
\end{alignat}
\end{subequations}
We  present an {\it a priori} error estimate for this method.

\begin{theorem}\label{mainerrorthmR}
Let $\sigma, u$ solve \eqref{Elas} and $\sigma^R_h, u^R_h$ solve \eqref{FEMR} then the following holds
\begin{equation}\label{error:sigmaR-Pi}
\|\Pi^R \sigma-\sigma^R_h\|_{\Aa}\le \|\Pi^R \sigma-\sigma\|_{\Aa},
\end{equation}
and
\begin{equation}\label{error:uR-P}
  \|u-u^R_h\|_{L^2(\Omega)} \le C \big(\| \sigma-\sigma^R_h\|_{\Aa}
  + \|Q^R u-u\|_{L^2(\Omega)}\big)
\end{equation}
where  $Q^R: [L^2(\Omega)]^3 \mapsto V^R_h$ is the $L^2$-orthogonal projection onto $V^R_h$.
\end{theorem}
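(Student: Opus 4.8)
The plan is to transcribe the proof of Theorem~\ref{mainerrorthm}, replacing $\Pi$, $V_h$, $Q$, $\sigma_h$, $u_h$ by $\Pi^R$, $V^R_h$, $Q^R$, $\sigma^R_h$, $u^R_h$, and using the reduced commuting relation~\eqref{commutePiglobalR} in place of~\eqref{commutePiglobal}. The only step that is not a direct transcription is re-establishing the discrete equilibrium identity $\dive(\Pi^R\sigma-\sigma^R_h)=0$ (the analogue of~\eqref{divsig}), since the earlier derivation used that $P$ is an isomorphism onto $W_h$, whereas here $V^R_h$ is merely the space of element-wise rigid displacements. Once that identity is in hand, the two estimates follow exactly as before.

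First I would prove that identity. Subtracting~\eqref{FEM2R} from~\eqref{Elas2} restricted to test functions $v\in V^R_h\subset V$, and adding~\eqref{commutePiglobalR}, gives $(\dive(\Pi^R\sigma-\sigma^R_h),v)=0$ for all $v\in V^R_h$. On each $T\in\Th$ set $w:=\dive(\Pi^R\sigma-\sigma^R_h)|_T$. By the defining constraint of the reduced stress space, $\dive\Sigma^R_h(T)\subseteq P_T\RM(T)$, so $w\in P_T\RM(T)$; hence, by the characterization $P_T\RM(T)=\{w\in W_h(T):I_Tw\in\RM(T)\}$, the rigid displacement $I_Tw$ lies in $V^R_h(T)$. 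Testing the orthogonality relation with $v=I_Tw$ and using~\eqref{ITinner} yields $0=(w,I_Tw)_T=(I_Tw,w)_T=\|w\|_{L^2(T)}^2$, so $w=0$. Summing over $T$ gives $\dive(\Pi^R\sigma-\sigma^R_h)=0$. With this, subtracting~\eqref{FEM1R} from~\eqref{Elas1}, testing with $\tau=\Pi^R\sigma-\sigma^R_h\in\Sigma^R_h$ (whose divergence now vanishes), and applying Cauchy--Schwarz in the $\Aa$-inner product proves~\eqref{error:sigmaR-Pi} just as~\eqref{error:sigma} was proved.

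For~\eqref{error:uR-P} I would reproduce the argument for~\eqref{error:u}: use Lemma~\ref{lem:inf-sup} to obtain $\tau\in H^1(\Omega,\mathbb{S})\subset D_\Pi$ with $\dive\tau=Q^R u-u^R_h$ and $\|\tau\|_{H^1(\Omega)}\le C\|Q^R u-u^R_h\|_{L^2(\Omega)}$; expand $\|Q^R u-u^R_h\|_{L^2(\Omega)}^2=(u-u^R_h,\dive\tau)$; split $\tau=\Pi^R\tau+(\tau-\Pi^R\tau)$; treat the $\Pi^R\tau$ term via~\eqref{Elas1} and~\eqref{FEM1R} and the $(\tau-\Pi^R\tau)$ term via~\eqref{commutePiglobalR} (tested against $u^R_h\in V^R_h$ and against $Q^R u\in V^R_h$) to reach
\begin{equation*}
  \|Q^R u-u^R_h\|_{L^2(\Omega)}^2 = -(\Aa(\sigma-\sigma^R_h),\Pi^R\tau) + (u-Q^R u,\dive(\tau-\Pi^R\tau));
\end{equation*}
then Cauchy--Schwarz, the approximation estimate~\eqref{PiboundR}, the stability bound for $\tau$, and the triangle inequality deliver~\eqref{error:uR-P}.

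I expect the derivation of $\dive(\Pi^R\sigma-\sigma^R_h)=0$ to be the only point requiring care: it hinges on the structural constraint $\dive\Sigma^R_h(T)\subseteq P_T\RM(T)$ imposed in the definition of $\Sigma^R_h(T)$, together with the $L^2(T)$-duality between $P_T\RM(T)$ and $\RM(T)$ furnished by~\eqref{ITinner}. Everything else is a routine adaptation of already-established arguments, so I would keep those parts brief in the write-up.
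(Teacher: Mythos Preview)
Your proposal is correct and follows essentially the same approach as the paper. The paper establishes $\dive(\Pi^R\sigma-\sigma^R_h)=0$ by invoking the bijectivity of $P_T:\RM(T)\to P_T\RM(T)$, while you use its inverse $I_T$ and~\eqref{ITinner} to reach the same conclusion; these are equivalent formulations of the same argument, and the remaining steps (for both estimates) coincide with the paper's.
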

\begin{proof}

Using \eqref{commutePiglobalR}, \eqref{FEM2R} and \eqref{Elas2} we get that 
\begin{equation*}
(\dive (\Pi^R \sigma-\sigma^R_h), v)=0  \quad {\text{ for all }} v \in V^R_h.
\end{equation*}
However, $(\dive (\Pi^R \sigma-\sigma^R_h), P v)= (\dive (\Pi^R \sigma-\sigma^R_h), v)$. Recalling~\eqref{eq:PhIh},  since $P_T: \RM(T) \rightarrow P_T \RM(T)$ is
bijection, we conclude that  
\begin{equation}\label{divsigR}
    \dive (\Pi^R \sigma-\sigma^R_h)=0.
\end{equation}
Then, \eqref{error:sigmaR-Pi} follows from \eqref{FEM1R} and \eqref{Elas1}. To prove \eqref{error:uR-P} one follows the same lines as the proof of~\eqref{error:u}. We leave the details to the reader. 
\end{proof}

\begin{corollary}
Let $\sigma, u$ solve \eqref{Elas}  and $\sigma^R_h, u^R_h$ solve \eqref{FEMR}. If  $\sigma  \in H^1(\Omega, \mathbb{S})$ and $u \in [H^1(\Omega)]^3$, then
 \begin{align}
    \|\sigma-\sigma^R_h\|_{\Aa} &\le C  h \|\sigma\|_{H^1(\Omega)} , \label{error:sigmaR} 
    \\
    \|u-u^R_h\|_{L^2(\Omega)} &\le C  h (\|\sigma\|_{H^1(\Omega)} +\|u\|_{H^1(\Omega)}). \label{error:uR}
\end{align}
\end{corollary}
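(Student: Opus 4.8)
The plan is to obtain both estimates by inserting the two error identities of Theorem~\ref{mainerrorthmR} into the interpolation bound \eqref{PiboundR} together with a standard Bramble--Hilbert/Poincar\'e estimate for the projection $Q^R$. Throughout I would freely use that $\Aa$ is bounded and coercive (cf.~\eqref{A-coercive}--\eqref{A-bounded}), so that $\|\cdot\|_{\Aa}$ and $\|\cdot\|_{L^2(\Omega)}$ are equivalent norms on $L^2(\Omega,\SSS)$ with mesh-independent constants.

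For the stress estimate, I would start from \eqref{error:sigmaR-Pi} and the triangle inequality,
\[
\|\sigma-\sigma^R_h\|_{\Aa}
\le \|\sigma-\Pi^R\sigma\|_{\Aa}+\|\Pi^R\sigma-\sigma^R_h\|_{\Aa}
\le 2\,\|\sigma-\Pi^R\sigma\|_{\Aa},
\]
then bound $\|\sigma-\Pi^R\sigma\|_{\Aa}\le C\|\sigma-\Pi^R\sigma\|_{L^2(\Omega)}$ using boundedness of $\Aa$, and finally invoke \eqref{PiboundR} with $\omega=\sigma\in H^1(\Omega,\SSS)$ to get $\|\sigma-\Pi^R\sigma\|_{L^2(\Omega)}\le C h\|\sigma\|_{H^1(\Omega)}$. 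This gives \eqref{error:sigmaR}.

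For the displacement estimate, I would apply \eqref{error:uR-P}. The term $\|\sigma-\sigma^R_h\|_{\Aa}$ is already controlled by $Ch\|\sigma\|_{H^1(\Omega)}$ from the previous step. For the remaining term $\|Q^Ru-u\|_{L^2(\Omega)}$, the key observation is that $V^R_h$ contains the element-wise constant vector fields, since constants belong to $\RM(T)$; hence on each $T\in\Th$,
\[
\|u-Q^Ru\|_{L^2(T)}=\inf_{v\in V^R_h(T)}\|u-v\|_{L^2(T)}\le \|u-\bar u_T\|_{L^2(T)}\le C h_T\,|u|_{H^1(T)},
\]
with $\bar u_T$ the mean of $u$ over $T$, by the Poincar\'e inequality and affine scaling. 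Summing over $T$ yields $\|Q^Ru-u\|_{L^2(\Omega)}\le Ch|u|_{H^1(\Omega)}$, and combining with the stress bound inside \eqref{error:uR-P} gives \eqref{error:uR}.

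I do not expect any real obstacle here: the proof is routine bookkeeping once Theorem~\ref{mainerrorthmR} is in hand. The only two points needing a line of justification are (i) the norm equivalence $\|\cdot\|_{\Aa}\simeq\|\cdot\|_{L^2(\Omega)}$ from \eqref{A-coercive}--\eqref{A-bounded}, and (ii) the first-order approximability of $Q^R$, which follows from the trivial inclusion of element-wise constants in $V^R_h$ plus a standard scaled Poincar\'e estimate.
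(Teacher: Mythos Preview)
Your proposal is correct and matches the intended argument: the paper states this corollary without proof, as it follows immediately from Theorem~\ref{mainerrorthmR} together with \eqref{PiboundR} and the first-order approximation property of $Q^R$, exactly as you outline. The only microscopic remark is that you need just one direction of the $\|\cdot\|_{\Aa}$--$\|\cdot\|_{L^2}$ comparison (boundedness \eqref{A-bounded}) for the stress step, not full equivalence, but this changes nothing.
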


\begin{theorem}
  \label{thm:ee-robustR}
  Assume that only \eqref{A-incompressible} holds.
  Let $\sigma, u$ solve \eqref{Elas} and $\sigma_h^R, u_h^R$ solve \eqref{FEM}. Then, 
    \begin{align}\label{error:sigmaR-incompressible}
        \|\sigma-\sigma_h^R\|_{L^2(\Omega)} \le C ( \|\sigma-\sigma_h^R\|_{\Aa} + h \|\div (\sigma - \Pi^R \sigma) \|_{L^2(\Omega)}).
    \end{align}
\end{theorem}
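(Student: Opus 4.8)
The plan is to follow the proof of Theorem~\ref{thm:ee-robust} essentially verbatim. As Remark~\ref{rem:incomproofcritical} already signals, the only step in that proof that is particular to the mixed method is the identity~\eqref{divsig}, and its analogue for the reduced pair~\eqref{FEMR}, namely $\divg(\Pi^R\sigma-\sigma_h^R)=0$, is precisely~\eqref{divsigR}, established in Theorem~\ref{mainerrorthmR}. The remaining structural ingredients are the commuting property~\eqref{commutePiglobalR} for $\Pi^R$, the approximation estimate~\eqref{PiboundR}, and the fact that $V_h^R$ contains the element-wise constants, so that the $L^2$-projection $Q^R$ onto $V_h^R$ obeys $\|\phi-Q^R\phi\|_{L^2(\Omega)}\le Ch\|\phi\|_{H^1(\Omega)}$ by an element-wise Poincar\'e inequality.

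The one genuinely new check is that the constant field $\mathbb{I}$ lies in $\Sigma_h^R$: it is constant and symmetric, hence in $\Sigma_h$; its divergence is $0\in P_T\RM(T)$; and $\mathbb{I}_{nt}=n\times(n\times n)=0\in\RM(F)$ on every facet $F\in\triangle_2(T)$. Granting this, taking $\tau=\mathbb{I}$ in~\eqref{Elas} and~\eqref{FEM1R} gives $\int_\Omega\tr(\sigma-\sigma_h^R)=0$, exactly as in Theorem~\ref{thm:ee-robust}, so a right inverse of the divergence (cf.~\eqref{eq:phi-tr}) produces $\phi\in[H_0^1(\Omega)]^3$ with $\divg\phi=\tr(\sigma-\sigma_h^R)$ and $\|\phi\|_{H^1(\Omega)}\le C\|\tr(\sigma-\sigma_h^R)\|_{L^2(\Omega)}$.

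Next I would rerun the identities of~\eqref{eq:tr-id}: integration by parts gives
\[
  \|\tr(\sigma-\sigma_h^R)\|_{L^2(\Omega)}^2
  = -3\,(\divg(\sigma-\sigma_h^R),\phi) - 3\,(\dev(\sigma-\sigma_h^R),\epsilon(\phi)),
\]
and~\eqref{divsigR} lets me replace $\sigma_h^R$ by $\Pi^R\sigma$ in the first inner product. I then apply~\eqref{commutePiglobalR} with $v=Q^R\phi\in V_h^R$ to rewrite $(\divg(\sigma-\Pi^R\sigma),\phi)=(\divg(\sigma-\Pi^R\sigma),\phi-Q^R\phi)$, which is bounded by $Ch\|\divg(\sigma-\Pi^R\sigma)\|_{L^2(\Omega)}\|\phi\|_{H^1(\Omega)}$; the deviatoric term is bounded by $C\|\sigma-\sigma_h^R\|_\Aa\|\phi\|_{H^1(\Omega)}$ using~\eqref{A-incompressible}. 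Cancelling one factor of $\|\tr(\sigma-\sigma_h^R)\|_{L^2(\Omega)}$ and using the $H^1$-bound on $\phi$ yields $\|\tr(\sigma-\sigma_h^R)\|_{L^2(\Omega)}\le C\big(h\|\divg(\sigma-\Pi^R\sigma)\|_{L^2(\Omega)}+\|\sigma-\sigma_h^R\|_\Aa\big)$. The conclusion~\eqref{error:sigmaR-incompressible} then follows from $C\|\sigma-\sigma_h^R\|_{L^2(\Omega)}\le\|\dev(\sigma-\sigma_h^R)\|_{L^2(\Omega)}+\|\tr(\sigma-\sigma_h^R)\|_{L^2(\Omega)}$ together with $\|\dev(\sigma-\sigma_h^R)\|_{L^2(\Omega)}\le C\|\sigma-\sigma_h^R\|_\Aa$, again from~\eqref{A-incompressible}.

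I do not anticipate a real obstacle here: the substantive work — unisolvency of the reduced element (Theorem~\ref{thm:ee-reduced}), triviality of the divergence-free, trace-free subspace (Lemma~\ref{Sigmazero}), and the commuting relation~\eqref{commutePiglobalR} — is already in place. The only points requiring care are the verification $\mathbb{I}\in\Sigma_h^R$ (so the $\int_\Omega\tr=0$ step is legitimate) and the bookkeeping that one must use the reduced objects $Q^R$ and $V_h^R$ throughout (not $Q$ and $V_h$), so that~\eqref{commutePiglobalR} and the element-wise Poincar\'e estimate genuinely apply.
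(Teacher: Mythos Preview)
Your proposal is correct and follows essentially the same approach as the paper, which simply notes that the proof of Theorem~\ref{thm:ee-robust} carries over once the critical ingredient~\eqref{divsig} is replaced by~\eqref{divsigR}. You in fact supply more detail than the paper does, in particular the explicit verification that $\mathbb{I}\in\Sigma_h^R$ and the careful replacement of $Q$ by $Q^R$ so that~\eqref{commutePiglobalR} applies.
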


\begin{proof}
  The proof proceeds along the same lines
  as that of Theorem~\ref{thm:ee-robust} since the
  analogue of the critical ingredient mentioned in 
  Remark~\ref{rem:incomproofcritical} is supplied by 
  \eqref{divsigR}.
\end{proof}

\section{Connection to stress elements with weakly imposed symmetry}
\label{sec:weak-symmetry}

In this section we discuss connection of the methods proposed in Sections~\ref{sec:JM-element} and \ref{sec:P0-disp} to mixed finite element methods for elasticity using stresses with weakly imposed stress symmetry, i.e., the symmetry of $\sigma$ is imposed weakly by the additional variational equation 
\begin{align*}
    \int_{\Omega} \sigma : \eta = 0, \quad \eta \in \Gamma = L^2(\Omega, \mathbb{K})
\end{align*}
where $\mathbb{K}$ is the space of
$N \times N$ skew-symmetric matrices, with $N=2$ or $3$.
Introducing $\rho = \grad u - \epsilon(u)$,  the skew-symmetric part of $\grad u$,  the alternative mixed formulation 
finds $\sigma \in \SW$,  $u \in V$, $\rho\in \Gamma$ such that
\begin{subequations}\label{Elas-ws}
\begin{alignat}{2}
(\bar{\Aa} \sigma, \tau)+(u, \dive \tau)+(\rho, \tau)& =  0 \qquad && {\text{ for all }} \tau \in \SW, \label{Elas1-ws}\\
(\dive \sigma, v)& =  (f,v) \qquad  && {\text{ for all }} v \in V, \label{Elas2-ws}\\
(\sigma, \eta)& =  0\qquad && {\text{ for all }} \eta \in \Gamma, \label{Elas3-ws}
\end{alignat}
\end{subequations}
where $\SW:=H(\dive, \Omega, \mathbb{M})$
and $\bar{\Aa}$ is an operator which identical to $\Aa$ for $L^2(\Omega, \mathbb{S})$ and is bounded and coercive on $L^2(\Omega, \mathbb{K})$.
 Since $\SW_h\subset \SW$ can be a matrix-valued finite element space such that each row is a standard $H(\text{div})$ finite element, it is easier to construct stable mixed methods for \eqref{Elas-ws}.

Finite element subspaces  $\SW_{h,k} \times \VW_{h,k}\times \Gamma_{h,k} \subset \SW \times V \times \Gamma$
giving stable mixed methods for~\eqref{Elas-ws} of the form
\begin{subequations}\label{Elasp-ws-discrete}
\begin{alignat}{2}
    (\bar{\Aa} \ssigma_h^{w}, \tau)+(\uu_h^{w}, \dive \tau)+(\hat{\rho}_h, \tau)& =  0 \qquad && {\text{ for all }} \tau \in \SW_{h,k}, \label{Elasp1-ws-discrete}\\
    (\dive \ssigma_h^{w}, v)& =  (f,v) \qquad  && {\text{ for all }} v \in \VW_{h,k}, \label{Elasp2-ws-discrete}\\
    (\ssigma_h^{w}, \eta)& =  0\qquad && {\text{ for all }} \eta \in \Gamma_{h,k}, \label{Elasp3-ws-discrete} 
\end{alignat}
\end{subequations}
has long been studied~\cite{arnoldbrezzi1984peers,stenberg1988elasticity,morley1989elasticity,farhloul1997elasticity,arnold2007mixed,boffietal2009elasticity,gopalakrishnan2012second}.
In \cite{gopalakrishnan2012second} it was proved that on  meshes where each element $T$ has  a  Clough--Tocher or Alfeld split, the spaces
\begin{subequations} \label{weak-symmetry-element}
  \begin{align}
    \label{weak-symmetry-element1}
    \SW_{h,k}&=\{\omega \in H(\dive, \om; \mathbb{M}): \omega|_K \in \pol_k(K, \mathbb{M}), {\text{ for all }} K \in \Ta, T \in \Th\}, \\
    \label{weak-symmetry-element2}
    \VW_{h,k} & = \{ v \in [L^2(T)]^N: v|_K \in [\pol_{k-1}(K)]^N, {\text{ for all }} K \in \Ta,  T \in \Th \}, \\
    \label{weak-symmetry-element3}
    \Gamma_{h,k}&= \{ \eta \in L^2(T, \mathbb{K}): \eta|_K \in \pol_{k}(K,\mathbb{K}), {\text{ for all }} K \in \Ta, T \in \Th \}
  \end{align}
\end{subequations}
give stable methods
for $k\ge N-1$ in the $N$-dimensional elasticity formulation for $N=2,3$.
Note that the polynomial degrees of $\SW_{h,k}$ and $\Gamma_{h,k}$ are the same in~\eqref{weak-symmetry-element}. Hence, the numerical solution $\ssigma_h^{w}$ of \eqref{weak-symmetry-element} is exactly symmetric pointwise. In \cite[Subsection~4.2.3]{lee2016weaksymmetry}, this result was extended to $k\ge 1$ for $N=3$.
In particular, it was proved in  \cite{lee2016weaksymmetry} that
\begin{equation}
  \label{eq:10}
  \parbox{0.82\textwidth}{
    given any $w \in W_h$ and  $\eta\in \Gamma_{h,k},$
    there exists $\tau \in \SW_{h,k}$ satisfying $\dive \tau = w$,
    $(\tau, \eta) = \| \eta \|_{L^2(\Omega)}^2$,    and
    $C\| \tau \|_{H(\div,\Omega)} \le
    \| w \|_{L^2(\Omega)} + \| \eta \|_{L^2(\Omega)}$.}
\end{equation}
The inf-sup stability of~\eqref{weak-symmetry-element} follows from~\eqref{eq:10}.

\begin{proposition}
  \label{prop:ws-1}
  The first two components of the unique solution
  $(\ssigma_h^{w}, \uu_h^{w}, \hat{\rho}_h)$
  of~\eqref{Elasp-ws-discrete} obtained
  using the spaces in~\eqref{weak-symmetry-element} with 
  $k=1$, coincide with the solution $(\ssigma_h,
  \uu_h)$ of~\eqref{FEMp}.
\end{proposition}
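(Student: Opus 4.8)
The plan is to show that the pair $(\ssigma_h, \uu_h)$ from~\eqref{FEMp}, augmented by a suitably chosen $\hat\rho_h \in \Gamma_{h,1}$, solves the weakly symmetric system~\eqref{Elasp-ws-discrete} with $k=1$; uniqueness of solutions to~\eqref{Elasp-ws-discrete} then forces the first two components to agree with $(\ssigma_h^w, \uu_h^w)$. First I would observe that the spaces match up correctly on the strongly symmetric side: the stress space $\Sigma_h$ of the JKM element is exactly the symmetric subspace of $\SW_{h,1}$ (since $\Sigma_h(T) = \{\omega \in \BDM_1(\Ta) : \int_T \omega : \eta = 0 \text{ for all } \eta \in \pol_1(\Ta,\K)\}$, which by~\eqref{Elasp3-ws-discrete} is precisely the constraint cutting $\SW_{h,1}$ down to symmetric tensors), and $\VW_{h,1} = W_h$ by~\eqref{weak-symmetry-element2}. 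So~\eqref{FEMp2} and~\eqref{Elasp2-ws-discrete} are literally the same equation, and $\ssigma_h$ automatically satisfies~\eqref{Elasp3-ws-discrete}.

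The real content is matching the first equation. I would test~\eqref{Elasp1-ws-discrete} against an arbitrary $\tau \in \SW_{h,1}$ and split $\tau = \sym\tau + \skw\tau$. On the symmetric part, $\sym\tau \in \Sigma_h$, the term $(\hat\rho_h, \sym\tau)$ vanishes (skew tested against symmetric), $\bar\Aa$ agrees with $\Aa$ there, and~\eqref{FEMp1} gives exactly $(\bar\Aa\ssigma_h, \sym\tau) + (\uu_h, \dive\,\sym\tau) = 0$. So the equation~\eqref{Elasp1-ws-discrete} holds for all of $\SW_{h,1}$ if and only if it holds for all $\tau \in \skw$-valued fields in $\SW_{h,1}$, i.e.\ $\tau \in \Gamma_{h,1}$ (noting $\Gamma_{h,1} \subset \SW_{h,1}$ since skew piecewise-linear fields on $\Ta$ lie in $H(\dive)$ trivially — each row has a polynomial, hence continuous, normal trace). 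On such $\tau$, the requirement becomes
\begin{equation*}
  (\hat\rho_h, \tau) = -(\bar\Aa\ssigma_h, \tau) - (\uu_h, \dive\tau) = -(\uu_h, \dive\tau) \qquad \text{for all } \tau \in \Gamma_{h,1},
\end{equation*}
using $(\bar\Aa\ssigma_h,\tau)=0$ because $\ssigma_h$ is symmetric and $\bar\Aa$ maps $L^2(\Omega,\K)$ into itself (or, more carefully, because $\bar\Aa$ restricted to symmetric inputs lands in symmetric matrices, which are orthogonal to $\tau$). Thus $\hat\rho_h$ is forced to be the Riesz representative in $\Gamma_{h,1}$ of the functional $\tau \mapsto -(\uu_h, \dive\tau)$, and I simply \emph{define} it that way. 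This is well-defined because $\Gamma_{h,1}$ is finite-dimensional with the $L^2$ inner product.

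It remains to check this defined $\hat\rho_h$ does not break the other two equations — but it doesn't appear in~\eqref{Elasp2-ws-discrete} or~\eqref{Elasp3-ws-discrete}, so those are unaffected and already verified. Hence $(\ssigma_h, \uu_h, \hat\rho_h)$ solves~\eqref{Elasp-ws-discrete}, and since that system has a unique solution (by the inf-sup stability of~\eqref{weak-symmetry-element} for $k=1$ established via~\eqref{eq:10} in~\cite{lee2016weaksymmetry}, together with coercivity of $\bar\Aa$), we conclude $(\ssigma_h^w, \uu_h^w) = (\ssigma_h, \uu_h)$. The main obstacle — really the only subtle point — is the clean algebraic separation of $\SW_{h,1}$ into $\Sigma_h \oplus \Gamma_{h,1}$ as an $L^2$-orthogonal and $H(\dive)$-compatible decomposition, and confirming that $\bar\Aa$ respects the symmetric/skew splitting well enough to kill the cross terms; I would make sure the hypothesis that $\bar\Aa$ "is identical to $\Aa$ for $L^2(\Omega,\SSS)$" is used precisely to guarantee $(\bar\Aa\ssigma_h, \tau)=0$ for skew $\tau$. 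Everything else is bookkeeping.
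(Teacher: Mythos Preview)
Your argument has a genuine gap at the decomposition step. The claim that $\Gamma_{h,1}\subset\SW_{h,1}$ is false: elements of $\Gamma_{h,1}$ are merely piecewise $\pol_1$ on $\Tha$ with no inter-element continuity imposed, so they are not in $H(\dive,\Omega,\M)$ in general. Your justification ``each row has a polynomial, hence continuous, normal trace'' conflates piecewise polynomial with globally polynomial. More fundamentally, the splitting $\tau=\sym\tau+\skw\tau$ for $\tau\in\SW_{h,1}$ does \emph{not} land both pieces back in $H(\dive)$: normal continuity of $\tau n$ across a face does not imply normal continuity of $(\sym\tau)n=\tfrac12(\tau+\tau')n$, since $\tau' n$ involves tangential data of $\tau$. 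So neither $\sym\tau\in\Sigma_h$ nor $\skw\tau\in\SW_{h,1}$ is guaranteed, and the reduction ``it suffices to verify \eqref{Elasp1-ws-discrete} on skew-valued $\tau\in\SW_{h,1}$'' is unjustified. Consequently your definition of $\hat\rho_h$ as a Riesz representer on $\Gamma_{h,1}$ of $\tau\mapsto -(\uu_h,\dive\tau)$ does not make sense as written (for $\tau\in\Gamma_{h,1}$ discontinuous, $\dive\tau$ is not in $L^2$), and even if restricted to the skew part of $\SW_{h,1}$, you have not shown that \eqref{Elasp1-ws-discrete} then holds for \emph{all} $\tau\in\SW_{h,1}$.

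The paper avoids this entirely by running the argument in the opposite direction: start from the unique weakly-symmetric solution $(\ssigma_h^w,\uu_h^w,\hat\rho_h)$, observe that \eqref{Elasp3-ws-discrete} forces $\ssigma_h^w\in\Sigma_h$, then simply \emph{restrict} the test functions in \eqref{Elasp1-ws-discrete} to $\tau\in\Sigma_h$. This kills the $(\hat\rho_h,\tau)$ term and turns $\bar\Aa$ into $\Aa$, so $(\ssigma_h^w,\uu_h^w)$ satisfies \eqref{FEMp}; uniqueness of \eqref{FEMp} finishes. Restriction of test functions is trivial, whereas your extension direction requires producing the Lagrange multiplier $\hat\rho_h$, which in turn needs something like surjectivity of $\skw:\SW_{h,1}\to\Gamma_{h,1}$ (itself nontrivial and essentially equivalent to the inf-sup condition). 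Your route can be repaired along those lines, but the paper's direction is both shorter and free of the obstacle you flagged.
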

\begin{proof}
  Note that when $k=1$, the space $\VW_{h,1}$ in~\eqref{weak-symmetry-element2} equals $W_h$. By \eqref{Elasp3-ws-discrete}, $\ssigma_h^{w}$ is symmetric, so it
  is in $\Sigma_h$, the JKM space. By
  restricting $\tau_h$ to $\Sigma_h$, \eqref{Elasp1-ws-discrete} and
  \eqref{Elasp2-ws-discrete} are exactly same as \eqref{FEMp} because
  $\bar{\Aa}=\Aa$ on $H(\dive, \Omega, \mathbb{S})$. Therefore,
  $\ssigma_h^{w}=\ssigma_h$ and $\uu_h^{w} =\uu_h$ by uniqueness of
  solutions.
\end{proof}

Next, consider the case where the displacement is discretized in the space of piecewise linear functions on the unsplit mesh $\Th$, namely the space $V_h$ from Section~\ref{sec:JM-element}, i.e., 
\begin{subequations}\label{Elas-ws-discrete}
\begin{alignat}{2}
    (\bar{\Aa} \sigma_h^{w}, \tau)+(u_h^{w}, \dive \tau)+(\rho_h, \tau)& =  0 \qquad && {\text{ for all }} \tau \in \SW_{h,1}, \label{Elas1-ws-discrete}\\
    (\dive \sigma_h^{w}, v)& =  (f,v) \qquad  && {\text{ for all }} v \in V_h, \label{Elas2-ws-discrete}\\
    (\sigma_h^{w}, \eta)& =  0\qquad && {\text{ for all }} \eta \in \Gamma_{h,1}. \label{Elas3-ws-discrete} 
\end{alignat}
\end{subequations}

\begin{proposition}
  The method~\eqref{Elas-ws-discrete} is inf-sup stable and the first
  two components of its unique solution
  $(\sigma_h^w, u_h^w, \rho_h) \in \SW_{h,1} \times V_h \times \Gamma_{h,1}$
  coincide with $(\sigma_h^{w}, u_h^{w} )$ solving \eqref{FEM}.
\end{proposition}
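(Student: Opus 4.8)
The plan is to follow the proof of Proposition~\ref{prop:ws-1} almost verbatim for the coincidence claim; the only genuinely new point is the inf-sup stability of~\eqref{Elas-ws-discrete}, which—unlike the case of~\eqref{Elasp-ws-discrete} with $k=1$—does not follow directly from~\eqref{eq:10}, since here $\dive \SW_{h,1} = W_h$ is \emph{not} the displacement test space $V_h$.

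First I would establish the inf-sup stability of the saddle point system. The operator $\bar{\Aa}$ is bounded and coercive on all of $L^2(\Omega,\M)$ (it acts as $\Aa$ on the symmetric part, where~\eqref{A-coercive} holds, and is coercive on the skew part), and any $\tau\in\SW_{h,1}$ with $(\dive\tau,v)=0$ for all $v\in V_h$ is divergence-free, exactly by the argument used in the proof of Theorem~\ref{mainerrorthm} (namely $\dive\tau\in W_h$, $(\dive\tau,v)=(\dive\tau,Pv)$, and $P$ maps $V_h$ onto $W_h$). Hence $\bar{\Aa}$ is coercive, with respect to the $H(\div)$-norm, on the kernel of the off-diagonal form $b(\tau,(v,\eta)):=(v,\dive\tau)+(\tau,\eta)$, and it remains to verify the inf-sup condition for $b$ on $\SW_{h,1}\times(V_h\times\Gamma_{h,1})$. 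Given $(v,\eta)\in V_h\times\Gamma_{h,1}$, set $w=Pv\in W_h$ and apply~\eqref{eq:10} with $k=1$ to get $\tau\in\SW_{h,1}$ with $\dive\tau=Pv$, $(\tau,\eta)=\|\eta\|_{L^2(\Omega)}^2$, and $C\|\tau\|_{H(\div,\Omega)}\le\|Pv\|_{L^2(\Omega)}+\|\eta\|_{L^2(\Omega)}$. Since $\dive\tau=Pv\in W_h$ and $v-Pv$ is $L^2$-orthogonal to $W_h$, we get $(v,\dive\tau)=\|Pv\|_{L^2(\Omega)}^2$, so $b(\tau,(v,\eta))=\|Pv\|_{L^2(\Omega)}^2+\|\eta\|_{L^2(\Omega)}^2\ge\beta(\|v\|_{L^2(\Omega)}^2+\|\eta\|_{L^2(\Omega)}^2)$ for some $\beta>0$, using the norm equivalence~\eqref{enorms1} summed over elements, while $\|\tau\|_{H(\div,\Omega)}\le C(\|v\|_{L^2(\Omega)}+\|\eta\|_{L^2(\Omega)})$ because $\|Pv\|_{L^2(\Omega)}\le\|v\|_{L^2(\Omega)}$. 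This is the required inf-sup bound, and well-posedness of~\eqref{Elas-ws-discrete} follows from standard saddle point theory \cite{BoffiBrezziFortin-2013}.

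Next I would prove the coincidence. Let $(\sigma_h^{w},u_h^{w},\rho_h)$ be the now unique solution of~\eqref{Elas-ws-discrete}. Since $\skw\sigma_h^{w}$ lies in $\Gamma_{h,1}$, testing~\eqref{Elas3-ws-discrete} with $\eta=\skw\sigma_h^{w}$ forces $\sigma_h^{w}$ to be symmetric, so $\sigma_h^{w}\in\Sigma_h$. Restricting the test function $\tau$ in~\eqref{Elas1-ws-discrete} to $\Sigma_h\subset\SW_{h,1}$, the term $(\rho_h,\tau)$ vanishes (a skew field tested against a symmetric one) and $(\bar{\Aa}\sigma_h^{w},\tau)=(\Aa\sigma_h^{w},\tau)$; combined with~\eqref{Elas2-ws-discrete} tested over $V_h$, this shows $(\sigma_h^{w},u_h^{w})\in\Sigma_h\times V_h$ satisfies precisely~\eqref{FEM1} and \eqref{FEM2}. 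The solution of~\eqref{FEM} is unique (coercivity of $\Aa$ together with Theorem~\ref{thm:inf-sup-JM}), so $\sigma_h^{w}=\sigma_h$ and $u_h^{w}=u_h$.

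The only delicate point is the inf-sup argument: the mismatch $\dive\SW_{h,1}=W_h\ne V_h$ is resolved by the identity $(v,\dive\tau)=(Pv,\dive\tau)$ valid whenever $\dive\tau\in W_h$, together with the repeatedly used fact that $P$ restricts to an isomorphism $V_h\to W_h$ satisfying~\eqref{enorms1}; everything else is a direct transcription of the proof of Proposition~\ref{prop:ws-1}.
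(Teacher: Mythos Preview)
Your proposal is correct and follows essentially the same route as the paper: apply~\eqref{eq:10} to $(Pv,\eta)$, use $(v,\dive\tau)=\|Pv\|_{L^2(\Omega)}^2$, and invoke~\eqref{enorms1} to pass from $\|Pv\|$ to $\|v\|$, then repeat the uniqueness argument of Proposition~\ref{prop:ws-1} for the coincidence claim. Your explicit verification of coercivity on the kernel of $b$ is a helpful addition the paper leaves implicit.
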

\begin{proof}
  Given any $(v, \eta)\in V_h \times \Gamma_{h,1}$, applying~\eqref{eq:10}
  to $(Pv, \eta) \in W_h \times \Gamma_{h,1}$, we find that
  there exists $\tau \in \SW_{h,1}$ satisfying
  \begin{equation}
    \label{eq:10-B}
    \dive \tau = Pv, \quad
    (\tau, \eta) = \| \eta \|_{L^2(\Omega)}^2, \quad 
    C\| \tau \|_{H(\div,\Omega)} \le
    \| P v\|_{L^2(\Omega)} + \| \eta \|_{L^2(\Omega)}.
  \end{equation}
  Using this $\tau$, 
  \begin{align*}
    \sup_{\og \in \SW_{h,1}}
    \frac{(v, \dive \og)+(\eta, \og)}{\| \og\|_{H(\divg, \om)} }
    & \ge
      \frac{(v, \dive \tau)+(\eta, \tau)}{\| \tau\|_{H(\divg, \om)} }
      \ge C ( \| Pv \|^2_{L^2(\om)} + \| \eta \|_{L^2(\om)}^2)^{1/2}
    \\
    & \ge C ( \| v \|^2_{L^2(\om)} + \| \eta \|^2_{L^2(\om)})^{1/2},
  \end{align*}
  where we have used \eqref{enorms1}. Thus, \eqref{Elas-ws-discrete}
  is inf-sup stable and uniquely solvable.  Now we can use the same
  uniqueness argument as in the proof of Proposition~\ref{prop:ws-1}
  to show that $(\sigma_h^{w}, u_h^{w} )$ and the solution of
  \eqref{FEM} are equal.
\end{proof}


To conclude this section, we have shown that it is possible to compute
the solutions of methods in Sections~\ref{sec:JM-element}
and~\ref{sec:P0-disp} using completely standard finite element spaces
via  formulations~\eqref{Elasp-ws-discrete}--\eqref{weak-symmetry-element}
and~\eqref{Elas-ws-discrete}.
However, we do not know how to arrive at
the reduced space $\Sigma_h^R \times V_h^R$ of
Section~\ref{sec:reduced-element} through methods with weakly imposed stress symmetry using standard spaces.
The space $\Sigma_h^R(T)$ does not have full
$\pol_1(K,\mathbb{M})$ polynomials for $K \in \Ta$, so finding an
appropriate $\Gamma_h^R \subset \Gamma$ which gives exact symmetry of
stress tensors and satisfies the inf-sup stability, appears to be nontrivial.

\section{The $N$-dimensional case}
\label{sec:ndim-case}

This section generalizes the JKM element to higher
dimensions.  To generalize our arguments in
Section~\ref{sec:JM-element}, we need an $N$-dimensional version of
the identity~\eqref{alg1-1}. This can be found, described in the
language of exterior calculus, in~\cite{arnold2006finite,
  arnold2021complexes, Falk08}, using the so-called ``BGG resolution''~\cite{arnold2007mixed,calabi1961compact, eastwood2000complex}. Nonetheless, we start by giving an
elementary and self-contained proof of such an identity using standard
vector notation in Subsection~\ref{ssec:comm-diagr-tens}.  We then use
it to develop the JKM element in $\mathbb{R}^N$ for any
$N\ge 2$.

\subsection{A commuting diagram of tensor fields for
  $N$-dimensional elasticity}  \label{ssec:comm-diagr-tens}

Let $\V = \R^N$, let $\M$ denote the space of $N \times N$ matrices, and let $\K = \skw(\M)$ denote the space of skew symmetric matrices. 
We use  $\otimes$ to denote the tensor product of vector spaces. Then
for (column) vectors  $w, v \in \V$, 
$w \otimes v= w v^t$ equals the outer product. Let 
$e_i$ denote the standard unit basis of $\V$,
$\EK_{ij}=2 \skw(e_i \otimes e_j)= e_i e_j^t- e_j e_i^t \in \K$.
Let
$\Lambda(\X)$ be $C^\infty$-smooth $\X$-valued fields where $\X$ is
one of $\V, \M,$ $\K$ or tensor products of these. 
Henceforth, we use Einstein's summation convention.

We continue to use the standard divergence operator on vector fields
in $\Lambda(\V)$ and the (previously used) row-wise divergence
operator on matrix fields in $\Lambda(\M)$. The \revj{ $\divg$ operator is also defined for elements in}   $\K \otimes \Lambda(\V)$ by 
\begin{equation*}
  \dive( \eta\otimes w )= \eta \otimes (\dive w),\, \qquad
  \eta \in \K, \;
  w \in \Lambda(\V).
\end{equation*}
This motivates the \revj{ definition of divergence acting on} the isomorphic
space $\Lambda( \K \otimes \V)$: writing an element of
$\Lambda( \K \otimes \V)$ as $b = b_{ijk} \EK_{ij} \otimes e_k$ for
some scalar component functions $b_{ijk}(x)$, we define
$\divg : \Lambda( \K \otimes \V) \to \Lambda(\K)$ by
\begin{equation}
  \label{eq:div-KV}
  \divg (b_{ijk} \EK_{ij} \otimes e_k) = (\partial_k b_{ijk}) \EK_{ij}.
\end{equation}
Next, define a differential operator
$\dd: \Lambda(\K) \rightarrow   \Lambda(\V)$ by
\begin{equation}
  \label{eqdd-1}
\dd(\eta_{ij} \EK_{ij}):= \partial_{j}
(\eta_{ij} -\eta_{ji}) e_i, \qquad \eta = \eta_{ij} \EK_{ij} \in \Lambda(\K).
\end{equation}
It is natural to extend $\dd$ to $\V \otimes \Lambda(\K)$ by
\begin{equation*}
  \dd(v \otimes \omega )=  v \otimes \dd \omega, \qquad v \in \V,
  \omega \in \Lambda(\K).
\end{equation*}
Since $\Lambda( \V \otimes \K)$ is isomorphic to
$\V \otimes \Lambda(\K)$, this motivates the definition of the
extension of $\dd$ in \eqref{eqdd-1} to
$\dd : \Lambda( \V \otimes \K) \to \Lambda(\M)$ by
\begin{equation}
  \label{eq:dd-2}
  \dd ( a_{ijk} e_i \otimes \EK_{jk}) = \partial_k( a_{ijk} - a_{ikj}) \,
  e_i \otimes e_j,
  \qquad  a_{ijk} e_i \otimes \EK_{jk}\in \Lambda(\V \otimes \K).  
\end{equation}

\begin{remark}
  Alternately, we can arrive at these definitions through a generalized
  divergence operator~\cite{MarsdHughe94}, $\Div$, that acts on
  $\Lambda(\V \otimes \V)$ and $\Lambda (\V \otimes \V \otimes \V)$ by
  \begin{gather}
    \label{eq:Div-def}
    \Div (a_{ij} e_i \otimes e_j) = (\partial_j a_{ij} ) e_i,
    \qquad
    \Div (a_{ijk} e_i \otimes e_j \otimes e_k) =
    (\partial_k a_{ijk} ) e_i\otimes e_j.
  \end{gather}
  Since $\K \otimes \V$ and $\V \otimes \K$ are both subspaces of
  $\V \otimes \V \otimes \V$, the latter defines $\Div$ on
  $\Lambda(\K \otimes \V)$ and $\Lambda(\V \otimes \K)$.  Since 
  \begin{align*}
    \eta
    & = \eta_{ij} \EK_{ij} = (\eta_{ij} - \eta_{ji}) e_i \otimes e_j
      && \in \Lambda(\K),
    \\
    a & = a_{ijk} e_i \otimes \EK_{jk} = (a_{ijk} - a_{ikj}) e_i \otimes
       e_j \otimes e_k && \in \Lambda( \V \otimes \K),
    \\
    b &  = b_{ijk} \EK_{ij} \otimes e_k = (b_{ijk} - b_{jik} ) e_i \otimes
        e_j \otimes e_k && \in \Lambda( \K \otimes \V),   
  \end{align*}
  applying~\eqref{eq:Div-def}  we easily see that
  equations~\eqref{eq:div-KV}, \eqref{eqdd-1}, and~\eqref{eq:dd-2} are
  the same as
  \[
    \divg b = \Div b, \quad 
    \dd \eta = \Div \eta, \quad 
    \dd a = \Div a,
  \]
  respectively.   
\end{remark}

With these definitions,  we have
\begin{align}
  \label{auxdddive-1}
 \dive  \dd\, \eta & =0,  && {\text{ for all }} \eta \in \Lambda(\K),  
  \\
  \label{applemma1-1}
  \dive \dd \, \omega & =  0,
  && {\text{ for all }} \omega \in \Lambda(\V \otimes \K).
\end{align}
Indeed, to see that \eqref{auxdddive-1} holds, 
it suffices to observe that
$\dive \dd \eta = \dive (\partial_{j} (\eta_{ij} -\eta_{ji}) e_i)=
\partial_{i}\partial_{j} (\eta_{ij} -\eta_{ji}) =0. $ Essentially the
same argument also shows~\eqref{applemma1-1}.

\begin{remark}
In the language of exterior calculus, \eqref{auxdddive-1} and
\eqref{applemma1-1} imply that the sequences
\begin{equation}
  \label{eq:cmplx1}
      \begin{tikzcd}
        & \Lambda(\K)
        \arrow{r}{\dd}
        &
        \Lambda(\V)
        \arrow{r}{\dive}
        & \Lambda(\R),
      \end{tikzcd}
\end{equation}
\begin{equation}
  \label{eq:cmplx2}
      \begin{tikzcd}
        & \Lambda(\V \otimes \K)
        \arrow{r}{\dd}
        &
        \Lambda(\M)
        \arrow{r}{\dive}
        & \Lambda(\V),
      \end{tikzcd}
\end{equation}
form complexes, elucidating the connection to relevant $N$-dimensional
de Rham complexes. Note that the tensor products of $\V$ with function
spaces in~\eqref{eq:cmplx1} yield their analogue in~\eqref{eq:cmplx2}
up to an isomorphism.  
\end{remark}

Next, we define a linear operator
$\Theta: \Lambda(\V \otimes \K) \rightarrow \Lambda(\K \otimes \V)$ by defining
$\Theta a$ for $a = a_{ijk} e_i \otimes e_j \otimes e_k$ in the larger
set $\Lambda(\V \otimes \V \otimes \V)$, namely 
\begin{equation}
  \label{eq:Theta-def}
  \Theta (a_{ijk} e_i \otimes e_j \otimes e_k)  := (a_{ijk} - a_{jik}) \; e_i \otimes e_j \otimes e_k  
\end{equation}
When $a$ is in $\Lambda(\V \otimes \K)$, we have 
$a = a_{ijk} e_i \otimes e_j \otimes e_k = \frac 1 2 a_{ijk} e_i
\otimes \EK_{jk}$. Hence, \eqref{eq:Theta-def} can equivalently be expressed as 
\begin{equation}
  \label{eq:Theta-def-2}
\Theta (a_{ijk} e_i \otimes \EK_{jk}) = 2 a_{ijk} \EK_{ij} \otimes
e_k.  
\end{equation}

\begin{lemma}
  The operator
  $ \Theta: \Lambda(\V \otimes \K) \rightarrow \Lambda(\K \otimes \V)$
  is invertible and  its inverse is given by
  \[
    \Theta^{-1}\, b
    = \frac 1 2 (b_{ijk} - b_{ikj} - b_{jki}) \,e_i \otimes e_j \otimes e_k
  \]
  for any $b = b_{ijk} \, e_i \otimes e_j \otimes e_k\in \Lambda(\K \otimes \V)$.
  Moreover, for all $\omega \in \Lambda(\V \otimes \K)$, 
  \begin{align}
    \dive \Theta \,\omega 
    & = 2 \skw \dd \,\omega. 
      \label{applemma2-1}       
  \end{align}
\end{lemma}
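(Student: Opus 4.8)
The plan is to prove everything by a short component computation, committing throughout to the basis $e_i\otimes e_j\otimes e_k$ of $\V\otimes\V\otimes\V$ (rather than the $\EK$-bases) and to the generalized divergence $\Div$ from the Remark introducing $\Div$, which contracts the last tensor slot and thereby unifies $\dive$ on $\Lambda(\K\otimes\V)$ with $\dd$ on $\Lambda(\V\otimes\K)$. I write a generic $\omega\in\Lambda(\V\otimes\K)$ as $\omega=a_{ijk}\,e_i\otimes e_j\otimes e_k$ subject to $a_{ijk}=-a_{ikj}$, and a generic $b\in\Lambda(\K\otimes\V)$ as $b=b_{ijk}\,e_i\otimes e_j\otimes e_k$ subject to $b_{ijk}=-b_{jik}$ (this matches the statement, where the inverse formula is already written in this basis). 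Under this bookkeeping no spurious factors of two arise: one has $a_{ijk}e_i\otimes\EK_{jk}=(a_{ijk}-a_{ikj})e_i\otimes e_j\otimes e_k$, so that $\dd\omega=\Div\omega=(\partial_k a_{ijk})\,e_i\otimes e_j$ agrees with~\eqref{eq:dd-2}, and likewise $\dive$ on $\Lambda(\K\otimes\V)$ agrees with~\eqref{eq:div-KV}.

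For invertibility I would verify three one-line facts. First, $\Theta$ does map $\Lambda(\V\otimes\K)$ into $\Lambda(\K\otimes\V)$: by~\eqref{eq:Theta-def} the components $c_{ijk}=a_{ijk}-a_{jik}$ of $\Theta\omega$ satisfy $c_{jik}=-c_{ijk}$. Second, the stated $\Theta^{-1}$ maps $\Lambda(\K\otimes\V)$ into $\Lambda(\V\otimes\K)$: with $d_{ijk}=\tfrac{1}{2}(b_{ijk}-b_{ikj}-b_{jki})$ one gets $d_{ijk}+d_{ikj}=\tfrac{1}{2}(-b_{jki}-b_{kji})$, which vanishes because $b_{kji}=-b_{jki}$. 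Third, $\Theta\circ\Theta^{-1}=\mathrm{id}$ on $\Lambda(\K\otimes\V)$: applying~\eqref{eq:Theta-def} to $d$ gives components $d_{ijk}-d_{jik}=\tfrac{1}{2}(b_{ijk}-b_{jik})=b_{ijk}$, using $b_{jik}=-b_{ijk}$. Since $\dim(\V\otimes\K)=\dim(\K\otimes\V)$, this already forces $\Theta$ to be a bijection with inverse the stated formula; if desired, $\Theta^{-1}\circ\Theta=\mathrm{id}$ follows by the same manipulation using $a_{ikj}=-a_{ijk}$.

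For the identity~\eqref{applemma2-1} I would just evaluate both sides on $\omega=a_{ijk}\,e_i\otimes e_j\otimes e_k\in\Lambda(\V\otimes\K)$. Left side: $\Theta\omega$ has components $c_{ijk}=a_{ijk}-a_{jik}$, so $\dive\Theta\omega=\Div\Theta\omega=(\partial_k c_{ijk})\,e_i\otimes e_j=(\partial_k a_{ijk}-\partial_k a_{jik})\,e_i\otimes e_j$. Right side: $\dd\omega=\Div\omega=(\partial_k a_{ijk})\,e_i\otimes e_j$ as an $\M$-valued field, whence $2\skw\dd\omega=(\partial_k a_{ijk}-\partial_k a_{jik})\,e_i\otimes e_j$. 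The two expressions coincide, which is~\eqref{applemma2-1}; note the right side is automatically $\K$-valued, consistent with $\dive\Theta\omega\in\Lambda(\K)$.

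All of these computations are routine; the only point that needs genuine care — and what I would flag as the main (if mild) obstacle — is the basis bookkeeping, since the defining formulas~\eqref{eq:Theta-def}/\eqref{eq:Theta-def-2}, \eqref{eq:div-KV}, and~\eqref{eqdd-1}/\eqref{eq:dd-2} are stated partly in the $\EK$-bases and partly in the $e_i\otimes e_j\otimes e_k$ basis, and a careless passage between them introduces factors of two. Committing from the outset to the $e_i\otimes e_j\otimes e_k$ basis together with the $\Div$ formalism of the Remark dissolves these factors and reduces the lemma to the handful of index identities above.
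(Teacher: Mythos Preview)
Your proof is correct and follows essentially the same approach as the paper: both are direct component computations verifying the inverse formula and the commutation identity. The only cosmetic differences are that you work entirely in the $e_i\otimes e_j\otimes e_k$ basis via the $\Div$ formalism (whereas the paper computes~\eqref{applemma2-1} in the $\EK$-basis), and you replace the paper's explicit verification of $\Theta^{-1}\circ\Theta=\mathrm{id}$ by a dimension argument after checking $\Theta\circ\Theta^{-1}=\mathrm{id}$; both shortcuts are sound and arguably streamline the bookkeeping.
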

\begin{proof}
  By direct calculation, it is easy to verify that the given
  expression for $\Theta^{-1}$ satisfies $ \Theta (\Theta^{-1} b) = b$
  for all $b \in \Lambda(\K \otimes \V)$ and $ \Theta^{-1} (\Theta a) = a$ for all
  $a \in \Lambda(\V \otimes \K)$. To detail the latter, letting
  $\theta = \Theta a = (a_{ijk} - a_{jik}) e_i \otimes e_j \otimes
  e_k,$ observe that
  \begin{align*}
    2 \Theta^{-1} \Theta a
    &= 
      (\theta_{ijk} - \theta_{ikj} - \theta_{jki}) e_i \otimes e_j \otimes   e_k
    \\
    & =
      ( (a_{ijk} - a_{jik}) - (a_{ikj} - a_{kij}) - (a_{jki} - a_{kji})
      ) e_i \otimes e_j \otimes  e_k
  \end{align*}
  When $a \in \Lambda(\V \otimes \K)$, its components have skew
  symmetry in the last two indices (i.e., $a_{ijk} = -a_{ikj}$), so
  the last expression above simplifies to
  $2a_{ijk} e_i \otimes e_j \otimes e_k = 2a$.

  Next, to prove~\eqref{applemma2-1}, let
  $\og = \omega_{ijk} e_i \otimes \EK_{jk}\in \Lambda( \V \otimes
  \K)$. Then, by~\eqref{eq:div-KV} and \eqref{eq:Theta-def-2}, 
  \begin{align*}
    \div (\Theta \omega)
    &  
      = \div( 2\omega_{ijk} \EK_{ij} \otimes e_k)
     = 2(\partial_k \omega_{ijk})\, \EK_{ij}.
  \end{align*}
  Also, by~\eqref{eq:dd-2}, 
  \begin{align*}
    2\skw  \dd \omega
    & 
      =  \partial_k (\omega_{ijk} - \omega_{ikj}) \,2 \skw(e_i \otimes e_j)
    \\
    &  = \partial_k (\omega_{ijk} - \omega_{jik}) \; \EK_{ij} 
      = 2(\partial_k \omega_{ijk})\, \EK_{ij}.
  \end{align*}
  Thus the left and right hand sides of~\eqref{applemma2-1} are equal.
\end{proof}

In other words,  we have shown that the following diagram commutes and that its  top and bottom rows are complexes.
\begin{equation*}
      \begin{tikzcd}
        &
        &
        \Lambda(\K \otimes \V)
        \arrow{r}{\dive}
        &  \Lambda(\K) 
        \\
        & \Lambda(\V \otimes \K)
        \arrow{r}{\dd}\arrow[ur, "\Theta", sloped]
        &
        \Lambda(\M)
        \arrow{r}{\dive}\arrow[ur, "2\mathrm{skw}", sloped]
        & \Lambda(\V) .
      \end{tikzcd} 
    \end{equation*}


For later use, we state the following result on composition of  trace with $\dd$ on a hyperplane.
\begin{lemma}\label{traceGamma}
Let $\eta= \eta_{ij} \EK_{ij} \in \Lambda(\K)$ and suppose that $\Gamma$ is a hyperplane in which the components of $\eta$ vanish then $\dd \eta  \cdot n$ vanishes on $\Gamma$ where $n$ is a unit normal to $\Gamma$.   
\end{lemma}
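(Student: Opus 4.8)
The plan is to unwind the coordinate definition of $\dd$ in~\eqref{eqdd-1} and reduce the claim to the elementary fact that a scalar function vanishing on a hyperplane has gradient normal to that hyperplane. Write $M$ for the skew-symmetric matrix field represented by $\eta = \eta_{ij}\EK_{ij}$; since $\EK_{ij} = e_i e_j^t - e_j e_i^t$, its $(k,l)$ entry is $M_{kl} = \eta_{kl} - \eta_{lk}$, and this is exactly the object whose vanishing on $\Gamma$ is being hypothesized (the individual scalars $\eta_{ij}$ are not uniquely determined, but $M$ is, and only $M$ enters both the hypothesis and $\dd\eta$). In this notation $\dd\eta = (\partial_j M_{ij})\, e_i$ is precisely the row-wise divergence of $M$, and the hypothesis reads $M_{ij}|_\Gamma = 0$ for all $i,j$.

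The first step is then to apply, componentwise, the fact that $\nabla M_{ij}$ is parallel to the unit normal $n$ on $\Gamma$ (because $M_{ij}$ vanishes identically on $\Gamma$, all of its tangential derivatives there vanish). This gives $\partial_j M_{ij} = n_j\,(n\cdot\nabla M_{ij})$ on $\Gamma$, hence $(\dd\eta)\cdot n = n_i n_j\,(n\cdot\nabla M_{ij})$ on $\Gamma$. The second and final step is to note that $n_i n_j$ is symmetric in $i,j$ whereas $n\cdot\nabla M_{ij}$ inherits the skew symmetry $M_{ij} = -M_{ji}$; contracting a symmetric tensor with a skew one gives zero, so $\dd\eta\cdot n = 0$ on $\Gamma$.

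An equivalent and perhaps cleaner route is to choose coordinates with $\Gamma = \{x_N = 0\}$ and $n = e_N$: for $j < N$ the derivative $\partial_j M_{ij}$ is tangential to $\Gamma$ and therefore vanishes there, so $(\dd\eta)\cdot n$ collapses on $\Gamma$ to the single term $\partial_N M_{NN}$, which is zero because $M$ is skew-symmetric. There is no genuine obstacle in this lemma; the only point needing a moment's care is the bookkeeping distinction between the non-unique scalar components $\eta_{ij}$ and the actual skew matrix field $M$ they represent, so I would state that identification explicitly at the outset.
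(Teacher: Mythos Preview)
Your proof is correct, and your second route---choosing coordinates with $n = e_N$, discarding the tangential-derivative terms, and noting that the surviving term $\partial_N M_{NN}$ vanishes by skew symmetry---is exactly the paper's argument. Your first, coordinate-free version (writing $\partial_j M_{ij} = n_j(n\cdot\nabla M_{ij})$ on $\Gamma$ and then contracting the symmetric $n_in_j$ against the skew $n\cdot\nabla M_{ij}$) is a pleasant alternative that avoids the rotation; it is not in the paper but is entirely equivalent in content and length.
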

\begin{proof}
After possibly rotating coordinates we can assume without loss of generality that $n=e_N$. Then, 
\begin{equation*}
\dd \eta \cdot n=\partial_j (\eta_{ij}-\eta_{ji}) e_i \cdot e_N= \partial_j (\eta_{Nj}-\eta_{jN})=\sum_{j=1}^{N-1} \partial_j (\eta_{Nj}-\eta_{jN}). 
\end{equation*}
The latter vanish on $\Gamma$ since $\eta_{Nj}-\eta_{jN}$ vanish on $\Gamma$  and $\partial_j$ (for $1 \le j\le N-1$) are tangential derivatives on $\Gamma$. 
\end{proof}

Of course, we can define the weak version of $\dd$ and extend the domain of $\dd$. To this end, let $\eta =\eta_{ij} \EK_{ij}$ we say $\dd \eta$ exists if there exists $\omega \in  L^1(\Omega, \V)$ such that
\begin{equation}\label{weakdd}
( \omega ,\phi_k  e_k)=  -( \eta_{ij} -\eta_{ji} , \partial_j \phi_k) \delta_{ik}, \qquad {\text{ for all }}  \phi = \phi_k e_k  \in C_0^\infty(\Omega, \V),   
\end{equation}
and we set $\dd \eta= \omega$. Here $\delta_{ik}$ is the Kronecker delta. We then define $H(\dd, \Omega):= \{ \eta \in L^2(\Omega, \K): \dd \eta \in L^2(\Omega, \V)\}$. The  identities \eqref{applemma1-1} and \eqref{applemma2-1} hold for $\omega \in \V \otimes H(\dd, \Omega)$.

\subsection{The stress element on the Alfeld split in higher dimensions}
\label{ssec:stress-elem-ndim}

For an $N$-simplex $T=[x_0, x_1, x_2, \cdots, x_N]$, let $\Ta$ be an
Alfeld split of $T$, i.e., choosing an interior point $z$ of $T$,
define $\Ta= \{ T_0, T_1, T_2, \cdots, T_N\}$ with $N$-simplices
$T_i=[z, x_0, \cdots, \widehat{x_i}, \cdots, x_N]$ where
$\widehat{x_i}$ means that $x_i$ is not in the vertices of $T_i$.  For
a given triangulation $\Th$ of $\Omega$, we let $\Tha$ be the
resulting triangulation after performing an Alfeld split to each
$T \in \Th$. On each macro element $T \in \Th$, define the  local spaces
\begin{alignat*}{1}
\pol_k(\Ta, \X):=&\{ \omega \in L^2(T, \X): \omega|_K \in \pol_k(K, \X), {\text{ for all }} K \in \Ta \},  \\
\Sigma_h(T):=& H(\div, T, \mathbb{S}) \cap \pol_1(\Ta, \mathbb{S}), \\
V_h(T):=&  \pol_1(T, \mathbb{V}). 
\end{alignat*}
The $N$-dimensional  matrix analogue of the well-known BDM space
and its subspace with vanishing normal components on $\partial T$ are 
  namely
  \begin{align*}
    \BDM_1(\Ta)
    &:=  H( \div, T, \M) \cap \pol_1(\Ta, \M), \\
    \BDMo_1(\Ta)
    &:= \{ \tau \in \BDM_1(\Ta): \tau \bn |_{\partial T} =0 \} .
  \end{align*}
  Notice that the Alfeld split of $T$ has $N+1$ distinct sub-simplices in $\triangle_{N-1}(T)$ and $\frac 12 N(N+1)$ distinct internal subsimplices in $\triangle_{N-1}(\Ta)$, therefore $\dim \BDM_1(\Ta) = (N+1)\cdot N^2 + \frac 12 N(N+1)\cdot N^2=(\frac N 2+1)(N+1)N^2$ and $\dim \BDMo_1(\Ta) = \frac 12 (N+1)N^3$. Notice also that $\dim \pol_1(\Ta, \mathbb{K}) = \frac 12 N(N-1)(N+1)^2$.

  We will also need the space  $\LLo_2(\Ta):= \pol_2(\Ta) \cap \mathring{H}^1(T)$ and $\X$-valued versions $\LLo_2(\Ta, \X)= \LLo_2(\Ta) \otimes \X$.
We begin by recalling   the next result  from  \cite[Theorem~3.1]{fu2018exact} (see also \cite{guzman2018inf}).
 \begin{proposition}\label{divonto}
Let $v \in \pol_1(T^A)$ with $\int_T v=0$ there exists $\rho \in \LLo_2(\Ta, \V)$ such that 
\begin{equation*}
\dive \rho=v.
  \end{equation*}
\end{proposition}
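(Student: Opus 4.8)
The plan is to recast the claim as the surjectivity of the divergence operator $\divg\colon \LLo_2(\Ta,\V) \to \polo_1(\Ta)$, where $\polo_1(\Ta) := \{ v \in \pol_1(\Ta) : \int_T v = 0 \}$. One direction is free: for $\rho \in \LLo_2(\Ta,\V)$ the restriction $\divg\rho|_K$ lies in $\pol_1(K)$ for each $K \in \Ta$, and since $\rho$ has vanishing trace on $\partial T$ the divergence theorem gives $\int_T \divg\rho = \int_{\partial T}\rho\cdot n = 0$, so indeed $\divg\rho \in \polo_1(\Ta)$. It is worth noting at the outset why the Alfeld split is essential here: on a single unsplit $N$-simplex the divergence of $[\pol_2 \cap \mathring{H}^1]^N$ does \emph{not} exhaust the mean-zero linears, so one cannot build $\rho$ subsimplex by subsimplex with homogeneous data; the construction must exploit the geometry of the split, and in particular the interior vertex $z$.

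First I would settle the case $N=2$, where $\Ta$ is the Clough--Tocher split. There, identifying $\R^2$-valued fields with rotated gradients, surjectivity of $\divg$ onto $\polo_1(\Ta)$ is equivalent to the classical exactness of the Clough--Tocher Stokes complex, whose first space is the Hsieh--Clough--Tocher space of $C^1$ piecewise cubics with homogeneous boundary data; I would lean on \cite{zhang2005stokes,guzman2018inf,fu2018exact} for this and for the dimension of that space. For general $N\ge 3$ I would then induct on the dimension: a facet of an $N$-dimensional Alfeld split is itself an $(N-1)$-dimensional Alfeld split, and the cone structure over the split vertex $z$ lets one lift lower-dimensional divergence potentials, which is exactly the inductive mechanism used for barycentric refinements in \cite{guzman2018inf,fu2018exact}. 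An essentially equivalent and perhaps cleaner bookkeeping is a global dimension count: $\dim\polo_1(\Ta) = (N+1)^2 - 1$, while $\dim\LLo_2(\Ta,\V) = N\dim\LLo_2(\Ta)$, and the kernel of $\divg$ on $\LLo_2(\Ta,\V)$ — the divergence-free $\mathring{H}^1$ piecewise quadratics — is identified, through the exact smooth de Rham sequence on the Alfeld split, with the image of a curl-type operator on a supersmooth potential space with homogeneous boundary data, whose dimension is computed from the supersmoothness constraints forced at $z$ and on the internal faces. Matching these by the rank--nullity theorem finishes the proof.

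The genuinely hard point is \emph{not} the arithmetic but the identification of that divergence-free kernel, equivalently the determination of the supersmoothness that is forced at the split vertex $z$ — this is precisely the content of the exact-sequence results on Alfeld splits. For this reason, in practice I would simply invoke \cite[Theorem~3.1]{fu2018exact} rather than reprove the supersmoothness analysis here, which is why the proposition is stated as a recalled result.
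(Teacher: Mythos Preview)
Your proposal is correct and aligns with the paper's treatment: the paper does not prove this proposition but simply recalls it from \cite[Theorem~3.1]{fu2018exact} (see also \cite{guzman2018inf}), exactly as you conclude in your final paragraph. Your additional sketch of how one might reprove it via induction on dimension or a rank--nullity count is reasonable supplementary commentary, but the paper itself offers no such argument.
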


\begin{proposition}\label{ddBDM}
If  $\rho \in \LL_2(\Ta,  \V \otimes \K)$ then $\dd \rho \in  \BDM_1(\Ta)$. Moreover,  if $\rho \in \LLo_2(\Ta, \V \otimes  \K)$ then $\dd \rho \in  \BDMo_1(\Ta)$. 
\end{proposition}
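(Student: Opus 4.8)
The plan is to check directly the two conditions that define $\BDM_1(\Ta)$ for the field $\dd\rho$: that it is piecewise $\pol_1$ on $\Ta$, and that it belongs to $H(\div, T, \M)$. The first is immediate, since $\dd$ is a first-order differential operator: from $\rho|_K \in \pol_2(K, \V \otimes \K)$ for each $K \in \Ta$ we get $\dd\rho|_K \in \pol_1(K, \M)$, hence $\dd\rho \in \pol_1(\Ta, \M)$. For the second, I will use the standard characterization (see, e.g., \cite{BoffiBrezziFortin-2013}) that a piecewise polynomial $\M$-valued field on $\Ta$ lies in $H(\div, T, \M)$ precisely when its row-wise normal traces are single-valued across each internal facet of $\Ta$; this reduces the claim to a statement about traces of $\dd$, which is exactly what Lemma~\ref{traceGamma} supplies.

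The key step is a row-wise reduction to the $\K$-valued operator $\dd$ of~\eqref{eqdd-1}. Writing $\rho = e_i \otimes \rho^{(i)}$ with each $\rho^{(i)} \in \LL_2(\Ta, \K)$ (so each $\rho^{(i)}$ is piecewise $\pol_2$ on $\Ta$ and continuous on $T$, since $\rho \in H^1(T)$), the extension rule $\dd(v \otimes \omega) = v \otimes \dd\omega$ gives $\dd\rho = e_i \otimes \dd\rho^{(i)}$; thus the $i$-th row of $\dd\rho$ is $\dd\rho^{(i)} \in \Lambda(\V)$, and the $i$-th component of $(\dd\rho)n$ is $(\dd\rho^{(i)}) \cdot n$. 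Now fix an internal facet $F$ of $\Ta$, shared by $K_1, K_2 \in \Ta$, with unit normal $n$ and affine span the hyperplane $H_F$. Continuity of $\rho^{(i)}$ across $F$ means the polynomial $p := \rho^{(i)}|_{K_1} - \rho^{(i)}|_{K_2}$ (each piece extended polynomially to $\mathbb{R}^N$) vanishes on $F$; since $F$ is a full $(N-1)$-simplex, $p$ vanishes on all of $H_F$, so the components of $p \in \Lambda(\K)$ vanish on $H_F$. Lemma~\ref{traceGamma} then yields $(\dd p) \cdot n = 0$ on $H_F$, in particular on $F$, i.e. $(\dd\rho^{(i)}) \cdot n$ has no jump across $F$. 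Ranging over $i$ shows $(\dd\rho)n$ is single-valued on $F$; as $F$ was arbitrary, $\dd\rho \in H(\div, T, \M)$, hence $\dd\rho \in \BDM_1(\Ta)$. For the ``moreover'' part, if in addition $\rho \in \LLo_2(\Ta, \V \otimes \K)$, then for any boundary facet $F \in \triangle_{N-1}(T)$ the single piece $\rho|_K$ (with $K \in \Ta$ the element carrying $F$) has all components vanishing on $F$, hence on $H_F$, so Lemma~\ref{traceGamma} applied row-wise gives $(\dd\rho)n = 0$ on $F$; since $\partial T$ is the union of these facets, $(\dd\rho)\bn|_{\partial T} = 0$, i.e. $\dd\rho \in \BDMo_1(\Ta)$.

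I expect no serious obstacle here: once the row-wise reduction is in place, everything follows from Lemma~\ref{traceGamma}. The only points demanding care are the index bookkeeping that identifies the $\V \otimes \K$-operator $\dd$ of~\eqref{eq:dd-2} with the $\K$-operator $\dd$ of~\eqref{eqdd-1} acting one row at a time, the elementary but worth-stating passage from ``$p$ vanishes on the simplex $F$'' to ``$p$ vanishes on the hyperplane $H_F$'', and the invocation of the $H(\div)$ conformity characterization for piecewise polynomials; the rest is routine.
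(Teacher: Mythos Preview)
Your proof is correct. It differs from the paper's in how $H(\div)$-conformity across internal facets is established. The paper argues via the weak definition of $\dd$ in~\eqref{weakdd}: since $\rho$ is continuous, $\rho \in \V \otimes H(\dd, T)$, so the weak $\dd\rho$ coincides with the piecewise one, and then the identity $\dive \dd = 0$ (which the paper has just noted extends to $\V \otimes H(\dd, \Omega)$) forces $\dive(\dd\rho) = 0$ distributionally, giving $\dd\rho \in H(\div, T, \M)$. You instead bypass both the weak formulation and the identity~\eqref{applemma1-1}, applying Lemma~\ref{traceGamma} directly to the polynomial jump across each internal facet to verify normal-trace continuity. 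This makes your argument uniform---the same lemma handles internal and boundary facets---and independent of the complex identity, at the cost of a little more bookkeeping; the paper's route is terser but leans on the weak extension of $\dive\dd=0$ stated just before the proposition.
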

\begin{proof}
Let $\rho \in  \LL_2(\Ta,  \V \otimes \K)$ then for  each $K \in \Ta$  $\dd \rho|_K$ has linear components. Hence, it is enough to show that $\rho \in H(\dd, T) \otimes \V$, but this follows immediately from \eqref{weakdd}.  If  $\rho \in \LLo_2(\Ta,  \V \otimes \K)$ then we must also show that $(\dd \rho) \, n$ vanishes on $\partial T$, but this follows from Lemma~\ref{traceGamma}. 
\end{proof}

\begin{lemma}\label{SigmazeroN}
  The equality 
  $\{ \omega \in \Sigma_h(T): \dive \omega=0, \omega n|_{\partial T}=0\}=\{0\}$
  continues to hold in the $N$-dimensional case. 
\end{lemma}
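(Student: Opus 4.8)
The plan is to mimic the three-dimensional proof of Lemma~\ref{Sigmazero}, but now using the $N$-dimensional machinery built in Subsection~\ref{ssec:comm-diagr-tens}. The key point is that $\{ \omega \in \Sigma_h(T): \dive \omega=0, \omega n|_{\partial T}=0\}$ equals $\{ \omega \in \BDMo_1(\Ta): \skw \omega = 0, \dive \omega = 0\}$, since the symmetry constraint on a $\BDMo_1(\Ta)$ element is exactly $\skw \omega = 0$. So it suffices to show that the linear operator $A: \BDMo_1(\Ta) \to X$ given by $A\eta = (\skw \eta, \dive \eta)$ is injective, and I will do this by a rank-nullity (dimension counting) argument, showing $A$ is surjective onto an appropriate target space $X$ whose dimension matches $\dim \BDMo_1(\Ta)$.

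First I would identify the target space. Using the complex \eqref{eq:cmplx2} and the commuting diagram, the natural candidate is
\[
  X = \left\{ (\mu, v) : \mu \in \skw(\pol_1(\Ta,\M)), \; v \in \pol_0(\Ta, \V) \text{ with } \textstyle\int_T v = 0, \; 2\,\dive \mu + \dd\text{-type relation with } v \right\},
\]
i.e. the pairs that are compatible with the identity $\dive(2\skw) = \dive\dive = 0$ analogue. More precisely, for any $\eta \in \BDMo_1(\Ta)$ one has, by integration by parts against the appropriate test tensors (generalizing \eqref{eq:5}), a linear constraint linking $\skw\eta$ and $\dive\eta$; $X$ is the set of pairs satisfying exactly that constraint. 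I would then count: $\dim \BDMo_1(\Ta) = \frac12 (N+1)N^3$ (given in the excerpt), and I need to check that $\dim X$ equals this same number, using $\dim \pol_1(\Ta)$, $\dim \tilde\pol_0(\Ta)$ on an Alfeld split, and $\dim\K = \binom N2$.

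For surjectivity of $A$, given $(\mu, v) \in X$: (1) use Proposition~\ref{divonto}-type exactness on the Alfeld split to find $\sigma \in \BDMo_1(\Ta)$ with $\dive \sigma = v$; (2) use the compatibility constraint defining $X$ together with the integration-by-parts identity to deduce that $\mu - \skw\sigma$ (viewed appropriately as a $\K$-valued field of the right polynomial degree) is in the range of $\dd$, hence by the exactness of the sequence \eqref{eq:cmplx1} (tensored with $\V$) on the Alfeld split — here I invoke Proposition~\ref{divonto} and Proposition~\ref{ddBDM} — there is $\eta \in \LLo_2(\Ta, \V\otimes\K)$ with $\dd\eta = $ (appropriate multiple of) $\mu - \skw\sigma$; (3) set $\tau = \sigma + \Theta^{-1}$-adjusted $\dd\eta$, or more directly use the commuting relation \eqref{applemma2-1}, so that $\dive\tau = v$ (since $\dive\dd = 0$ by \eqref{applemma1-1}) and $\skw\tau = \mu$. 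By Proposition~\ref{ddBDM}, $\tau \in \BDMo_1(\Ta)$, establishing surjectivity.

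Finally, rank-nullity gives $\dim\ker A = \dim\BDMo_1(\Ta) - \dim X = 0$, so $A$ is injective, which is the claim. The main obstacle I anticipate is getting the dimension bookkeeping exactly right — in particular, pinning down $\dim X$ (i.e. identifying the correct rank of the constraint map $(\mu,v) \mapsto$ integrated identity, which is $\dim\K = \binom N2$ if the constraint is nondegenerate) and matching it against $\dim\BDMo_1(\Ta)$, as well as making sure the exact-sequence inputs (the analogues of \cite{zhang2005stokes,guzman2018inf,fu2018exact} used in 3D) are genuinely available on Alfeld splits in all dimensions in the precise form needed for step (2). The algebraic identities \eqref{applemma1-1} and \eqref{applemma2-1}, already proven, do the heavy lifting that $\eqref{alg1-1}$ did in 3D, so that part should be routine.
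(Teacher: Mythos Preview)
Your proposal is essentially the paper's own proof: the same operator $A = (\skw, \dive)$ on $\BDMo_1(\Ta)$, the same constraint-defined target space $X$, the same surjectivity argument via Proposition~\ref{divonto}, $\Theta^{-1}$, and the commuting identity~\eqref{applemma2-1}, followed by the same rank-nullity dimension count. The only imprecision is in your step~(2): you should say that the constraint defining $X$ forces $\mu - \skw\sigma$ to have zero mean in each $\K$-component, hence by Proposition~\ref{divonto} it has a $\dive$-preimage $\omega \in \LLo_2(\Ta, \K\otimes\V)$, after which one sets $a = \Theta^{-1}\omega$ and $\tau = \sigma + \dd a$; writing ``$\mu - \skw\sigma$ is in the range of $\dd$'' is a type mismatch since $\dd$ lands in $\Lambda(\M)$, not $\Lambda(\K)$.
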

\begin{proof}

Let $\tilde \pol_0(\Ta) = \{ u \in \pol_0(\Ta): \int_T u = 0\}$ and
  set
  \[
    X = \left\{ (\eta, v):  \eta \in \pol_1(\Ta, \mathbb{K}), \;
      v \in \tilde \pol_0(\Ta, \V), \;
      \int_T (\eta : \kappa + v \cdot \kappa x ) = 0 \text{ for all } \kappa \in \mathbb{K}
    \right\}
  \]
  where $\kappa x$ is the matrix-vector product with the coordinate vector polynomial $x$. 
  The proof is based on  the operator $A : \BDMo_1(\Ta) \to X$ given by
  \begin{equation}
    \label{eq:Adefn-ndim'}
    A \tau = (-\skw \tau, \dive \tau),\qquad \tau \in \BDMo_1(\Ta).
  \end{equation}
  Observe that for any $\kappa \in \mathbb{K}$ and $\tau \in \BDMo_1(\Ta)$,
  \begin{equation}
    \label{eq:5-ndim'}
    \int_T \kappa : \skw \tau = \int_T \grad (\kappa x):\tau 
    = -\int_T (\kappa x) \cdot \div \tau .    
  \end{equation}
  Thus $A$ indeed maps into $X$. We proceed to  show that $A$ is surjective.

  Let $(\eta, v) \in X$. Since $v$ has components of zero
  mean, by a standard exact sequence property~\cite{arnold2018finite},
  there exists a $\sigma \in \BDMo_1(\Ta)$ such that
  \begin{equation}
    \label{eq:divsig-ndim'}
    \dive \sigma = v.
  \end{equation}
  Since $(\eta, v ) \in X$, this implies 
  \[
    \int_T (\eta - \skw \sigma) : \kappa = \int_T (\eta : \kappa
    + (\div \sigma ) \cdot \kappa x) 
    = \int_T (\eta : \kappa   + v \cdot \kappa x)  = 0,
  \]
  where we have again used~\eqref{eq:5-ndim'}.  Letting
  $\zeta = \eta - \skw \sigma \in \pol_1(\Ta, \K),$ the above shows all the components have zero
  mean.  Hence, by Proposition \ref{divonto} there exists $\omega \in  \LLo_2(\Ta, \V \otimes \K)$ such that 
\begin{equation}\label{eq:diveta-ndim'}
    \dive  \omega = \eta - \skw \sigma.
\end{equation}
  We set $a= \Theta^{-1} \omega$ and note that $a \in  \LLo_2(\Ta,  \V \otimes \K)$. Setting $\tau = \sigma + \dd \, a$, then by Proposition \ref{ddBDM} we have $\tau \in \BDMo_1(\Ta)$.

  By the commuting diagram property~\eqref{applemma2-1}, 
  $\skw \dd \, a = \div \Theta a = \div \omega$, so we have
  \begin{align*}
    \skw \tau & = \skw \sigma + \div \omega = \eta ,
    && \text{(by~\eqref{eq:diveta-ndim'})}
    \\
    \dive \tau & = v.
    && \text{(by~\eqref{eq:divsig-ndim'} and \eqref{applemma1-1})}
  \end{align*}
  Thus we have proved that $A$ is surjective.

  To conclude, the surjectivity of $A$ implies that 
  \begin{align*}
    \rank(A) &\ge \dim \pol_1(\Ta, \mathbb{K}) + \dim \tilde \pol_0(\Ta,\V) - \dim \mathbb{K}
    \\
             &= \frac 12 N(N-1)(N+1)^2 + N^2 - \frac 12 N(N-1)
    \\
             &= \frac 12 N^3(N+1). 
  \end{align*}
  Since $\dim \BDMo_1(\Ta) = \frac 12 N^3(N+1)$, the rank-nullity theorem implies that the null space of $A$ is trivial,
  i.e.,
  $\{ \tau \in \BDMo_1(\Ta) : \; \skw \tau =0, \; \dive \tau=0\} =
  \{ 0\}$. 
  \end{proof}

We now prove the main result of this section. The proof is  a straightforward generalization of the proof of Theorem~\ref{thmdofs1}.
\begin{theorem}\label{thm:n-dim-Sigmazero}
  The dimension of $\Sigma_h(T)$ is $(N+\frac 12)N(N+1)$. Moreover,
  an element $\omega \in \Sigma_h(T)$ is uniquely determined by the
  following dofs:
\begin{subequations}
\label{Sigmah-ndim'}
\begin{alignat}{4}
  &\int_F \omega \bn_T \cdot \kappa \, {\ds}, \qquad
  &&\kappa\in \pol_1(F, \V),
  \quad F \in \triangle_{N-1}(T), \qquad
  && \dofcnt{($(N+1)N^2$ dofs)} \label{Sigmah1-ndim'}
  \\
  &\int_T \omega.
  &&\quad && \dofcnt{($\frac 12 N(N+1)$ dofs)} \label{Sigmah2-ndim'}
\end{alignat}
\end{subequations}

\end{theorem}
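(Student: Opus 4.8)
The plan is to follow the proof of Theorem~\ref{thmdofs1} essentially line for line, with Lemma~\ref{SigmazeroN} replacing Lemma~\ref{Sigmazero} and the three-dimensional dimension counts replaced by their $N$-dimensional analogues recorded above. I would first pin the dimension from below, exactly as in dimension three: since a $\Ta$-piecewise-linear matrix field is $\mathbb S$-valued precisely when its skew part (which lies in $\pol_1(\Ta,\mathbb{K})$) is $L^2$-orthogonal to $\pol_1(\Ta,\mathbb{K})$, one has $\Sigma_h(T)=\{\omega\in\BDM_1(\Ta):\int_T\omega:\eta=0\text{ for all }\eta\in\pol_1(\Ta,\mathbb{K})\}$, whence
\[
  \dim\Sigma_h(T)\ \ge\ \dim\BDM_1(\Ta)-\dim\pol_1(\Ta,\mathbb{K})
  \ =\ \Bigl(\tfrac N2+1\Bigr)(N+1)N^2-\tfrac12 N(N-1)(N+1)^2 .
\]
Pulling out the common factor $\tfrac12 N(N+1)$ turns the right-hand side into $\tfrac12 N(N+1)\bigl(N(N+2)-(N-1)(N+1)\bigr)=\tfrac12 N(N+1)(2N+1)=(N+\tfrac12)N(N+1)$, which is exactly the total number $(N+1)N^2+\tfrac12 N(N+1)$ of dofs listed in~\eqref{Sigmah-ndim'}.

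Next, for unisolvency (which simultaneously gives the matching upper bound), I would take $\omega\in\Sigma_h(T)$ annihilated by all dofs in~\eqref{Sigmah-ndim'} and show $\omega=0$. The facet dofs~\eqref{Sigmah1-ndim'} force $\omega\bn_T=0$ on $\partial T$, so integration by parts gives $\int_T\divg\omega\cdot v=-\int_T\omega:\nabla v$ for every $v\in\pol_1(T,\V)=V_h(T)$; since $v$ is affine, $\nabla v$ is constant, and because $\int_T\omega\in\mathbb S$ this equals $-(\int_T\omega):\epsilon(v)=0$ by~\eqref{Sigmah2-ndim'}. Thus $\divg\omega$, which lies in the $\Ta$-piecewise-constant vector space $W_h(T)$, is $L^2(T)$-orthogonal to $V_h(T)$. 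Choosing $v=I_T\divg\omega$ and invoking the $N$-dimensional analogue of the quadrature identity~\eqref{ITinner} then yields $\|\divg\omega\|_{L^2(T)}^2=0$, so $\divg\omega=0$, and Lemma~\ref{SigmazeroN} gives $\omega=0$. Hence the dof map is injective, $\dim\Sigma_h(T)\le(N+\tfrac12)N(N+1)$, and combining with the lower bound proves both the claimed dimension and unisolvency.

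The substantive work has already been carried out in Lemma~\ref{SigmazeroN} (which in turn rests on the commuting identity~\eqref{applemma2-1}), so I do not anticipate a genuine obstacle here. The only points needing a brief check are the two dimension identities above and the fact that the auxiliary objects $W_h(T)$, $I_T$, and~\eqref{ITinner} from Section~\ref{sec:JM-element} transfer verbatim to $N$ dimensions: namely that $\divg\pol_1(K,\mathbb S)\subset\pol_0(K,\V)$ on each $K\in\Ta$, that the barycenters $x_{T_0},\dots,x_{T_N}$ of the Alfeld subsimplices are affinely independent (one computes $x_{T_i}-x_{T_0}=\tfrac1{N+1}(x_0-x_i)$, and these span $\R^N$ since $T$ is nondegenerate), and that the mean of an affine function over a simplex equals its value at the barycenter — which is precisely the quadrature fact underlying~\eqref{ITinner}. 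All of this is routine bookkeeping.
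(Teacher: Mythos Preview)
Your proposal is correct and follows essentially the same approach as the paper's own proof: both compute the lower bound $\dim\Sigma_h(T)\ge\dim\BDM_1(\Ta)-\dim\pol_1(\Ta,\mathbb K)$, then show unisolvency by taking $\omega$ with vanishing dofs, using the facet dofs to kill $\omega n$, integrating by parts and using~\eqref{Sigmah2-ndim'} to get $\int_T\divg\omega\cdot v=0$ for all $v\in V_h(T)$, applying the quadrature-based isomorphism $I_T$ to conclude $\divg\omega=0$, and finishing with Lemma~\ref{SigmazeroN}. You actually spell out more detail than the paper does (the algebra behind the dimension count and the affine independence of the sub-barycenters underpinning~\eqref{ITinner}), which is fine.
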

\begin{proof}
Since $\Sigma_h(T)=\{ \omega \in \BDM_1(\Ta): \int_T \omega : \eta = 0 \; \text{ for all } \eta \in \pol_1(\Ta, \mathbb{K})\}$, 
  \begin{align*}
    \dim \Sigma_h(T) &\ge \dim \BDM_1(\Ta) - \dim \pol_1(\Ta, \mathbb{K})
    \\
    & = (\frac N2 +1)(N+1)N^2 - \frac N2 (N-1)(N+1)^2 \\
                     &= \frac N2 (N+1)(2N+1).
  \end{align*}
  To show $\dim \Sigma_h(T) = \frac N2 (N+1)(2N+1)$, it is sufficient to prove that $\omega = 0$ if the dofs \eqref{Sigmah-ndim'} of $\omega \in \Sigma_h(T)$ vanish because the number of dofs in \eqref{Sigmah-ndim'} is $\frac N2 (N+1)(2N+1)$. This also proves that $\Sigma_h(T)$ is unisolvent by \eqref{Sigmah-ndim'}. 

 To this end, let  $\omega \in \Sigma_h(T)$ have vanishing dofs \eqref{Sigmah-ndim'}. Then,  $\omega n=0$ on $\partial T$ by \eqref{Sigmah1-ndim'}. Moreover,  using \eqref{Sigmah2-ndim'} and integration by parts gives
  \begin{align}
      \int_T \div \omega \cdot v = -\int_T \omega : \epsilon(v) = 0 \qquad  {\text{ for all }}        v \in \pol_1(T, \V).
  \end{align}
 By a simple argument using quadrature rules as in the 3D case,
  we conclude that $\div \omega = 0$. Hence, Lemma \ref{SigmazeroN} implies that $\omega=0$. 
\end{proof}


\appendix

\section{Another reduced space pair}
\label{sec:2nd-reduced-element}

In this section we reduce the spaces even further from the reduced pair in Section~\ref{sec:reduced-element}. In this further reduced finite element method, $V_h$ is replaced by only piecewise constant elements on $T \in \mathcal{T}_h$ and $\Sigma_h$ is also further reduced correspondingly. 
We prove optimal $\mathcal{A}$-weighted $L^2$ error estimate for
$\sigma$ and $L^2$ error estimate for $u$. However, for this further
reduced method, we are not able to prove
(an analogue of Theorem~\ref{thm:ee-robust} giving)
a robust error estimate 
for the nearly incompressible case. The reason for this is that we now only
have very weak
control of the divergence of the stresses.  Hence we are unable to
recommend this method. Nonetheless, we present it as a curiosity,
since it has only three dofs per facet and yet remains stable. 


The discrete spaces for this method are 
\begin{alignat*}{1}
\tilde{\Sigma}_h^R(T):& =  \{ \omega \in \Sigma_h(T): \dive \omega \in P_T \RM(T),  \omega n \in [\pol_0(F)]^3, {\text{ for all }} F \in \triangle_2(T)\},\\
\tilde{V}_h^R(T):& =  [\pol_0(T)]^3.
\end{alignat*}
The corresponding global spaces are denoted by  $\tilde{\Sigma}_h^R, \tilde{V}_h^R$. 

\begin{theorem}
An element $\omega \in  \tilde{\Sigma}_h^R(T)$  is uniquely determined by the following dofs:
\begin{alignat}{4}
&\int_F \omega \bn  \cdot \kappa \, {\ds}, \qquad &&\kappa\in [\pol_0(F)]^3, F \in \triangle_2(T),\qquad && \dofcnt{(12 dofs).} \label{SigmaRh1t}
\end{alignat}
\end{theorem}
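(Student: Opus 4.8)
The plan is to adapt, almost verbatim, the argument used for Theorem~\ref{thm:ee-reduced}. First I would establish the lower bound $\dim \tilde{\Sigma}_h^R(T) \ge 12$ by counting the linear conditions cutting $\tilde{\Sigma}_h^R(T)$ out of $\Sigma_h(T)$, whose dimension is $42$ by Theorem~\ref{thmdofs1}. Since $\dive \omega \in W_h(T)$ for every $\omega \in \Sigma_h(T)$, and $\dim W_h(T) = 12$ while $\dim P_T\RM(T) = \dim \RM(T) = 6$ by~\eqref{eq:6}, the requirement $\dive \omega \in P_T\RM(T)$ amounts to at most $6$ linear conditions. On each facet $F \in \triangle_2(T)$, the trace $\omega n|_F$ lies in $[\pol_1(F)]^3$, which has nine components, so requiring $\omega n|_F \in [\pol_0(F)]^3$ imposes at most $6$ conditions, i.e.\ at most $24$ conditions over the four facets. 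Hence $\dim \tilde{\Sigma}_h^R(T) \ge 42 - 6 - 24 = 12$, matching the number of dofs in~\eqref{SigmaRh1t}.

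Next I would show that the listed dofs determine $\omega$ uniquely, which gives the reverse inequality $\dim \tilde{\Sigma}_h^R(T) \le 12$. Suppose $\omega \in \tilde{\Sigma}_h^R(T)$ has all dofs in~\eqref{SigmaRh1t} equal to zero. Because $\omega n|_F \in [\pol_0(F)]^3$ and its integral against every constant vector on $F$ vanishes, $\omega n = 0$ on each facet $F$, and therefore $\omega n = 0$ on $\partial T$. By the definition of $\tilde{\Sigma}_h^R(T)$ there is a $v \in \RM(T)$ with $P_T v = \dive \omega$; since $\dive \omega \in W_h(T)$, the defining property~\eqref{eq:PTdef} of $P_T$ yields
\[
  \|\dive \omega\|_{L^2(T)}^2 = (\dive \omega, P_T v)_T = (\dive \omega, v)_T .
\]
Integration by parts then gives $(\dive \omega, v)_T = -(\omega, \epsilon(v))_T + \int_{\partial T} \omega n \cdot v$, where we used the symmetry of $\omega$ to replace $\nabla v$ by $\epsilon(v) = \sym(\nabla v)$. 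As $v$ is a rigid displacement, $\epsilon(v) = 0$, and $\omega n = 0$ on $\partial T$, so $\dive \omega = 0$. Lemma~\ref{Sigmazero} then forces $\omega = 0$. This proves $\dim \tilde{\Sigma}_h^R(T) = 12$ and that~\eqref{SigmaRh1t} is a set of unisolvent degrees of freedom.

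There is no genuine obstacle in this argument beyond the bookkeeping of the constraint count; the only step requiring a moment of care is recognizing that the divergence constraint lives inside $W_h(T)$, so that the pairing of $\dive\omega$ with the rigid field $v$ collapses, through $P_T$, to a boundary term that vanishes once $\omega n|_{\partial T} = 0$ is known. This is exactly the same mechanism as in the proof of Theorem~\ref{thm:ee-reduced}, and it is built into the definition of $\Sigma_h(T)$.
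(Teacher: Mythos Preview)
Your proof is correct and follows essentially the same approach as the paper's own proof: the same constraint count $42 - (6 + 6\cdot 4) = 12$, the same use of a rigid displacement $v$ with $P_T v = \dive \omega$ to force $\dive\omega = 0$ via the boundary integral, and the same appeal to Lemma~\ref{Sigmazero}. Your version is slightly more detailed in spelling out the integration by parts with $\epsilon(v)=0$, but the argument is otherwise identical.
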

\begin{proof}
We see that $\dim \tilde{\Sigma}_h^R(T) \ge \dim \Sigma_h(T)- (6+6 \cdot 4)=12$ which is exactly the number of dofs in \eqref{SigmaRh1t}. Now suppose that $\omega \in \tilde{\Sigma}_h^R(T)$ and the  dofs \eqref{SigmaRh1t} vanish. Then,  we have  $\omega \bn =0$ on $\partial T$.

Let $v \in \RM(T)$ be such that $P_T v= \dive \omega$ then
\begin{alignat*}{1}
\|\dive \omega\|_{L^2(T)}^2 = \int_T \dive \omega \cdot P_T v =\int_T \dive \omega \cdot v = \int_{\partial T} \omega n \cdot v= 0.
\end{alignat*}
 Hence, this shows that $\dive \omega=0$. By Lemma \ref{Sigmazero} we have that $\omega$ vanishes. 
\end{proof}
The corresponding projection with the degrees of freedom is defined as follows. 
\begin{subequations}
\label{PiRt}
\begin{alignat}{4}
&\int_F (\tilde{\Pi}^R_T \omega \bn) \cdot  \kappa \, {\ds}= \int_F  \omega \bn  \cdot \kappa \, {\ds} , \qquad &&\kappa\in [\pol_0(F)]^3, F \in \triangle_2(T).  \label{Pi1Rt}
\end{alignat}
\end{subequations}
From this we define the global projection $\tilde{\Pi}^R$.

\begin{lemma}
It holds, 
\begin{equation}\label{commutePiglobalRt}
 (\dive( \tilde{\Pi}^R \omega -\omega),  v) =0 \qquad {\text{ for all }} v \in \tilde{V}_h^R, \omega \in    D_\Pi. 
\end{equation}
\end{lemma}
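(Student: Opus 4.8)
The plan is to follow the same reasoning that establishes the commuting property~\eqref{commutePiglobal} for $\Pi_T$, carried out element by element. Fix $T \in \Th$, take $\omega \in D_\Pi$ and $v \in \tilde{V}_h^R$, so that $v|_T$ is a constant vector. Since $\omega|_T \in H(\dive, T, \SSS)$ by the definition of $D_\Pi$, and $\tilde{\Pi}^R_T \omega \in \tilde{\Sigma}_h^R(T) \subset H(\dive, T, \SSS)$, the difference $\tau := \tilde{\Pi}^R_T \omega - \omega$ lies in $H(\dive, T, \SSS)$; moreover its normal trace $\tau n$ on each $F \in \triangle_2(T)$ belongs to $[L^2(F)]^3$, which for $\omega$ is part of the definition of $D_\Pi$ and for $\tilde{\Pi}^R_T \omega$ follows from the $\BDM$-type conformity built into $\tilde{\Sigma}_h^R(T)$ (with in fact $(\tilde{\Pi}^R_T \omega)\, n|_F \in [\pol_0(F)]^3$).

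First I would integrate by parts over $T$. Because $v|_T$ is constant, $\nabla v = 0$ on $T$, hence $\epsilon(v)|_T = 0$, and therefore
\begin{equation*}
  (\dive \tau, v)_T = \int_T \dive \tau \cdot v = \int_{\partial T} \tau n \cdot v = \sum_{F \in \triangle_2(T)} \int_F \tau n \cdot v .
\end{equation*}
Next I would note that the restriction of the constant field $v$ to any facet $F$ is again a constant vector, so $v|_F \in [\pol_0(F)]^3$ is an admissible test function in the defining relation~\eqref{Pi1Rt} of $\tilde{\Pi}^R_T$. Taking $\kappa = v|_F$ in~\eqref{Pi1Rt} gives $\int_F (\tilde{\Pi}^R_T \omega)\, n \cdot v = \int_F \omega n \cdot v$, i.e.\ $\int_F \tau n \cdot v = 0$ for every $F \in \triangle_2(T)$. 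Hence $(\dive \tau, v)_T = 0$, and summing over all $T \in \Th$ yields~\eqref{commutePiglobalRt}.

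I do not anticipate a genuine obstacle here: this lemma plays, for the further reduced pair $\tilde{\Sigma}_h^R \times \tilde{V}_h^R$, exactly the role that~\eqref{commutePiglobal} plays for the JKM pair. The only points deserving a moment's care are the well-definedness of the normal traces used in the integration by parts—guaranteed by $\omega \in D_\Pi$ and the conformity built into $\tilde{\Sigma}_h^R(T)$—and the elementary fact that a constant vector field restricts to a piecewise constant (hence admissible) test function on each facet of $T$.
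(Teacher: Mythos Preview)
Your proof is correct and follows essentially the same approach as the paper's own proof, which is the one-line computation $(\dive(\tilde{\Pi}^R\omega-\omega),v)_T=\int_{\partial T}(\tilde{\Pi}^R\omega-\omega)n\cdot v=0$ for $v\in[\pol_0(T)]^3$. You have simply spelled out the integration by parts and the use of~\eqref{Pi1Rt} with more care than the paper does.
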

\begin{proof}
Let $v \in [\pol_0(T)]^3$. Then, 
\begin{alignat*}{2}
(\dive( \tilde{\Pi}^R \omega -\omega), v)_T& =  \int_{\partial T } (\tilde{\Pi}^R \omega -\omega) \bn \cdot v = 0. 
\end{alignat*}
\end{proof}

We omit the proofs of the following results analogous to previous sections.

\begin{proposition}
  For all $\omega \in  H^1 (\Omega, \mathbb{S})$,
  \begin{alignat}{2}
    \| \omega-\tilde{\Pi}^R \omega\|_{L^2(\Omega)}+ h \|\dive(\omega-\tilde{\Pi}^R \omega)\|_{L^2(\Omega)} \le  & C h \|\omega\|_{H^1(\Omega)}. \label{PiboundRt}
  \end{alignat}
\end{proposition}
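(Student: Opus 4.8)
The plan is the standard element-wise scaling-plus-Bramble--Hilbert argument, identical in spirit to the (likewise omitted) proofs of~\eqref{Pibound} and~\eqref{PiboundR}. Fixing $T \in \Th$, I would establish two facts about $\tilde{\Pi}^R_T$ and then combine them: (i) $\tilde{\Pi}^R_T$ reproduces constant symmetric-matrix fields, and (ii) $\tilde{\Pi}^R_T$ is bounded, in the $h_T$-scaled $H(\divg,T)$-norm, by the $H^1(T)$-norm. Fact (i) is immediate: a constant field $q \in \pol_0(T,\SSS)$ has $\divg q = 0 = P_T 0 \in P_T\RM(T)$ and $qn|_F \in [\pol_0(F)]^3$ for every $F \in \triangle_2(T)$, hence $q \in \tilde{\Sigma}_h^R(T)$; since a constant trivially satisfies the defining relations~\eqref{Pi1Rt}, the unisolvency proved just above forces $\tilde{\Pi}^R_T q = q$. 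This is the only consistency available --- the dofs~\eqref{SigmaRh1t} only test normal traces against constants on faces --- which is precisely why the estimate carries a single power of $h$ and no more.

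For (ii), I would use the scaled trace inequality $\|\phi\|^2_{L^2(\partial T)} \le C(h_T^{-1}\|\phi\|^2_{L^2(T)} + h_T|\phi|^2_{H^1(T)})$ valid on shape-regular simplices: each dof value $\int_F \omega n\cdot\kappa$, with $\kappa$ constant on $F$, is bounded by $C h_T^{(N-1)/2}\|\omega n\|_{L^2(F)}$, hence controlled by $h_T^{(N-2)/2}\|\omega\|_{L^2(T)}$ and $h_T^{N/2}|\omega|_{H^1(T)}$. On a reference simplex the finite-dimensional space $\tilde{\Sigma}_h^R(\hat T)$ together with its unisolvent dofs yields, by equivalence of norms, a bounded reference interpolant; transferring back via the matrix Piola transform and affine scaling then gives
\[
  \|\tilde{\Pi}^R_T\omega\|_{L^2(T)} + h_T\|\divg\tilde{\Pi}^R_T\omega\|_{L^2(T)}
  \le C\bigl(\|\omega\|_{L^2(T)} + h_T\,|\omega|_{H^1(T)}\bigr).
\]
The delicate point is that the constraints cutting $\tilde{\Sigma}_h^R(T)$ out of $\Sigma_h(T)$ (divergence in $P_T\RM(T)$, constant normal traces) are not invariant under a general affine pullback, and the Alfeld split point is not fixed either; I would handle this as for the non-reduced element, by viewing $T$ as the image of a member of the shape-regularity-controlled --- hence precompact --- family of unit simplices equipped with an interior split point, so that the norm-equivalence constant is uniform over the family. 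Securing this uniformity is the main (and essentially the only) obstacle; the rest is bookkeeping with powers of $h_T$.

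To assemble, let $q$ be the $L^2(T)$-average of $\omega$. By (i), $\omega - \tilde{\Pi}^R_T\omega = (\omega-q) - \tilde{\Pi}^R_T(\omega-q)$, so (ii) applied to $\omega-q$ together with the Poincar\'e estimate $\|\omega-q\|_{L^2(T)} \le C h_T|\omega|_{H^1(T)}$ gives $\|\omega-\tilde{\Pi}^R_T\omega\|_{L^2(T)} \le C h_T|\omega|_{H^1(T)}$. For the divergence term, $\divg(\omega-\tilde{\Pi}^R_T\omega) = \divg(\omega-q) - \divg\tilde{\Pi}^R_T(\omega-q)$ with $\|\divg(\omega-q)\|_{L^2(T)} = \|\divg\omega\|_{L^2(T)} \le C|\omega|_{H^1(T)}$ and, from (ii), $h_T\|\divg\tilde{\Pi}^R_T(\omega-q)\|_{L^2(T)} \le C(\|\omega-q\|_{L^2(T)} + h_T|\omega|_{H^1(T)}) \le C h_T|\omega|_{H^1(T)}$, whence $h_T\|\divg(\omega-\tilde{\Pi}^R_T\omega)\|_{L^2(T)} \le C h_T|\omega|_{H^1(T)}$. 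Squaring, summing over $T \in \Th$, and using $h_T \le h$ yields~\eqref{PiboundRt}.
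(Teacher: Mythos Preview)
Your proposal is correct and follows exactly the standard scaling/Bramble--Hilbert route that the paper has in mind; indeed the paper omits the proof entirely, merely indicating it is analogous to the (likewise standard) arguments behind~\eqref{Pibound} and~\eqref{PiboundR}. Your identification of the only nontrivial issue---uniformity of the reference-element constant over the shape-regular family of Alfeld-split simplices, handled by compactness---is apt and is the one point a careful write-up would need to spell out.
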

\begin{theorem}
\label{infsupRt}
There exists a constant $\beta>0$ such that 
\begin{equation}
   \beta \le \inf_{0 \not= v \in \tilde{V}^R_h} \sup_{0 \neq \omega \in \tilde{\Sigma}_h^R}  \frac{( \dive \omega, v)}{\|\omega\|_{H(\dive,\Omega)} \|v\|_{L^2(\Omega)} }. 
\end{equation}
\end{theorem}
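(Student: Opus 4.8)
The plan is to follow the proof of Theorem~\ref{thm:inf-sup-JM} almost verbatim, with $\Pi$ replaced by $\tilde{\Pi}^R$ and $V_h$ replaced by $\tilde{V}_h^R$. The argument rests on three facts already in hand: the $H^1$-bounded right inverse of the divergence from Lemma~\ref{lem:inf-sup}, the commuting relation~\eqref{commutePiglobalRt}, and the interpolation estimate~\eqref{PiboundRt}.

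First I would fix $v \in \tilde{V}_h^R$ and apply Lemma~\ref{lem:inf-sup} to obtain $\tau \in H^1(\Omega,\mathbb{S})$ with $\dive \tau = v$ and $\|\tau\|_{H^1(\Omega,\mathbb{S})} \le C \|v\|_{L^2(\Omega)}$. Since $H^1(\Omega,\mathbb{S}) \subset D_\Pi$, the function $\omega := \tilde{\Pi}^R \tau$ is a well-defined element of $\tilde{\Sigma}_h^R$. Because $v \in \tilde{V}_h^R$, the commuting property~\eqref{commutePiglobalRt} gives
\[
(\dive \omega, v) = (\dive \tau, v) = (v, v) = \|v\|_{L^2(\Omega)}^2 .
\]

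Next I would bound $\|\omega\|_{H(\dive,\Omega)}$. By the triangle inequality and~\eqref{PiboundRt},
\[
\|\tilde{\Pi}^R \tau\|_{H(\dive,\Omega)}
\le \|\tau\|_{H(\dive,\Omega)} + \|\tau - \tilde{\Pi}^R\tau\|_{H(\dive,\Omega)}
\le C \|\tau\|_{H^1(\Omega)} \le C \|v\|_{L^2(\Omega)} .
\]
Combining this with the identity of the previous paragraph,
\[
\sup_{0 \neq \eta \in \tilde{\Sigma}_h^R} \frac{(\dive \eta, v)}{\|\eta\|_{H(\dive,\Omega)}}
\ge \frac{(\dive \omega, v)}{\|\omega\|_{H(\dive,\Omega)}}
= \frac{\|v\|_{L^2(\Omega)}^2}{\|\omega\|_{H(\dive,\Omega)}}
\ge \frac{1}{C}\|v\|_{L^2(\Omega)} ,
\]
which is the assertion with $\beta = 1/C$.

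There is essentially no obstacle: all the ingredients are already available, so this is a short argument rather than a hard one. The only point deserving a moment's attention is that the commuting identity~\eqref{commutePiglobalRt} has been recorded precisely for test functions $v \in \tilde{V}_h^R$, i.e., piecewise constants on the unsplit mesh $\mathcal{T}_h$, which is exactly the displacement space entering the present inf-sup condition; in particular no auxiliary isomorphism onto a split-mesh space is needed here, in contrast with the error analyses that invoke $P$.
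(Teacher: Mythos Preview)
Your proof is correct and is precisely the argument the paper has in mind: it omits the proof as ``analogous to previous sections,'' and the relevant previous section is the proof of Theorem~\ref{thm:inf-sup-JM}, which you have reproduced verbatim with $\Pi$, $V_h$ replaced by $\tilde{\Pi}^R$, $\tilde{V}_h^R$ and \eqref{commutePiglobal}, \eqref{Pibound} replaced by \eqref{commutePiglobalRt}, \eqref{PiboundRt}. Your closing remark that no auxiliary isomorphism onto a split-mesh space is needed is also apt.
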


{\bf{The mixed method with the second reduced element}} finds $\tsigma^R_h \in \tilde{\Sigma}^R_h$ and $\tu^R_h \in \tilde{V}^R_h$ satisfying
\begin{subequations}\label{FEMRt}
\begin{alignat}{2}
(\Aa \tsigma^R_h, \tau)+(\tu^R_h, \dive \tau)& =  0 \qquad && {\text{ for all }} \tau \in \tilde{\Sigma}^R_h, \label{FEM1Rt}\\
(\dive \tsigma^R_h, v)& =  (f,v) \qquad  && {\text{ for all }} v \in \tilde{V}^R_h. \label{FEM2Rt}
\end{alignat}
\end{subequations}

The error analysis of this method is more involved than the previous cases since $\dive( \tilde{\Pi}^R  \sigma-\tsigma^R_h)$  is not necessarily zero. We proceed
by proving a stability result for the method, as in \cite{arnold2006finite}, using
the bilinear form
\begin{equation*}
B(\omega_h, v_h; \tau_h, w_h):= (\Aa \omega_h, \tau_h)+(v_h, \dive \tau_h)-(\dive \omega_h, w_h). 
\end{equation*}

\begin{theorem} \label{thm:Babuska-Aziz-infsup}
There exists constants $c_1$ and $c_2$ such that 
for every $\omega_h \in \tilde{\Sigma}^R_h$ and $v_h \in \tilde{V}_h^R$, there exists $\tau_h \in \tilde{\Sigma}^R_h$ and $w_h \in \tilde{V}_h^R$, satisfying
\begin{alignat}{1}
\| \omega_h\|_{\Aa}^2+ \|\tilde{P}^R \dive \omega_h\|_{L^2(\Omega)}^2+\|v_h\|_{L^2(\Omega)}^2 \le c_1  B(\omega_h, v_h; \tau_h, w_h), \label{thminf1t}
\end{alignat}
and
\begin{alignat}{1}
 &  \|\tau_h\|_{\Aa}^2+ \|\tilde{P}^R \dive \tau_h\|_{L^2(\Omega)}^2 + \|w_h\|_{L^2(\Omega)}^2  \nonumber \\
 & \le  c_2 (\| \omega_h\|_{\Aa}^2+ \|\tilde{P}^R \dive \omega_h\|_{L^2(\Omega)}^2+\|v_h\|_{L^2(\Omega)}^2). \label{thminf2t}
\end{alignat}
\end{theorem}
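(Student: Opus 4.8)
The plan is to prove the two inequalities by an explicit construction of the test pair $(\tau_h, w_h)$, in the style of the Babu\v{s}ka--Brezzi-type stability argument of \cite{arnold2006finite}. Throughout, let $\tilde{P}^R$ denote the $L^2(\Omega)$-orthogonal projection onto $\tilde{V}_h^R$. Given $(\omega_h, v_h) \in \tilde{\Sigma}_h^R \times \tilde{V}_h^R$, I would first invoke the inf-sup condition of Theorem~\ref{infsupRt}: since the supremum is attained in this finite-dimensional setting, one obtains $\tau_h^1 \in \tilde{\Sigma}_h^R$, normalized so that
\[
  (\dive \tau_h^1, v_h) = \|v_h\|_{L^2(\Omega)}^2, \qquad
  \|\tau_h^1\|_{H(\dive,\Omega)} \le \beta^{-1} \|v_h\|_{L^2(\Omega)}
\]
(and $\tau_h^1 := 0$ when $v_h = 0$). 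Boundedness~\eqref{A-bounded} then gives $\|\tau_h^1\|_\Aa \le \gamma^{1/2}\beta^{-1}\|v_h\|_{L^2(\Omega)}$, and since $\tilde{P}^R$ is an $L^2$-contraction, $\|\tilde{P}^R \dive \tau_h^1\|_{L^2(\Omega)} \le \|\dive \tau_h^1\|_{L^2(\Omega)} \le \beta^{-1}\|v_h\|_{L^2(\Omega)}$. Then, for a parameter $\delta > 0$ to be fixed, I would set
\[
  \tau_h := \omega_h + \delta\,\tau_h^1 \in \tilde{\Sigma}_h^R, \qquad
  w_h := -\,\tilde{P}^R \dive \omega_h + v_h \in \tilde{V}_h^R.
\]

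Next I would expand $B(\omega_h, v_h; \tau_h, w_h)$. Since $v_h$ and $w_h$ lie in $\tilde{V}_h^R$, testing $\dive$ against them sees only $\tilde{P}^R\dive$, so $(v_h, \dive\omega_h) = (v_h, \tilde{P}^R\dive\omega_h)$ and $(\dive\omega_h, w_h) = (\tilde{P}^R\dive\omega_h, w_h)$. A direct computation then yields
\[
  B(\omega_h, v_h; \tau_h, w_h)
  = \|\omega_h\|_\Aa^2 + \|\tilde{P}^R\dive\omega_h\|_{L^2(\Omega)}^2
    + \delta\|v_h\|_{L^2(\Omega)}^2 + \delta(\Aa\omega_h, \tau_h^1),
\]
because the two cross terms equal to $(v_h, \tilde{P}^R\dive\omega_h)$ — one arising from $(v_h, \dive\tau_h)$ and one from $-(\dive\omega_h, w_h)$ — cancel exactly. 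This cancellation, forced by the choice $w_h = -\tilde{P}^R\dive\omega_h + v_h$, is the crux of the argument: the naive choice $w_h = -\tilde{P}^R\dive\omega_h$ leaves an uncontrollable contribution $(v_h, \tilde{P}^R\dive\omega_h)$ that cannot be absorbed into the available $\delta\|v_h\|^2$. The remaining term is handled by Young's inequality, $\delta|(\Aa\omega_h,\tau_h^1)| \le \tfrac14\|\omega_h\|_\Aa^2 + C\delta^2\|v_h\|_{L^2(\Omega)}^2$; fixing $\delta$ small enough that $C\delta^2 \le \delta/2$ gives
\[
  B(\omega_h, v_h; \tau_h, w_h) \ge
  \tfrac34\|\omega_h\|_\Aa^2 + \|\tilde{P}^R\dive\omega_h\|_{L^2(\Omega)}^2 + \tfrac{\delta}{2}\|v_h\|_{L^2(\Omega)}^2,
\]
which is \eqref{thminf1t} with $c_1$ depending only on $\delta$.

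Finally, \eqref{thminf2t} follows from the triangle inequality applied to the definitions of $\tau_h$ and $w_h$, together with the bounds on $\tau_h^1$ above: $\|\tau_h\|_\Aa \le \|\omega_h\|_\Aa + C\delta\|v_h\|_{L^2(\Omega)}$, $\|\tilde{P}^R\dive\tau_h\|_{L^2(\Omega)} \le \|\tilde{P}^R\dive\omega_h\|_{L^2(\Omega)} + C\delta\|v_h\|_{L^2(\Omega)}$, and $\|w_h\|_{L^2(\Omega)} \le \|\tilde{P}^R\dive\omega_h\|_{L^2(\Omega)} + \|v_h\|_{L^2(\Omega)}$, which combine to give $c_2$. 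I do not expect a serious obstacle here: the only step needing an idea rather than bookkeeping is the cancelling choice of $w_h$, and the $\Aa$-coercivity~\eqref{A-coercive} (in force throughout this section) enters only through the norm equivalence $\|\cdot\|_\Aa \sim \|\cdot\|_{L^2(\Omega)}$ on $\tilde{\Sigma}_h^R$, so the write-up should be short.
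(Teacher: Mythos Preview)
Your proposal is correct and follows essentially the same approach as the paper: the paper likewise obtains $\rho_h$ from the inf-sup condition, sets $\tau_h=\omega_h+\frac{1}{\gamma\kappa^2}\rho_h$ and $w_h=v_h-\tilde P^R\dive\omega_h$, observes the same cancellation of the cross term $(v_h,\tilde P^R\dive\omega_h)$, and absorbs the remaining $(\Aa\omega_h,\rho_h)$ via Young's inequality. The only cosmetic difference is that the paper fixes the explicit parameter $\delta=\frac{1}{\gamma\kappa^2}$ from the outset rather than choosing $\delta$ small at the end.
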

\begin{proof}
By the proof of Theorem \ref{infsupRt} there exists $\rho_h \in \tilde{\Sigma}^R_h$ such that
\begin{alignat}{1}
\|v_h\|_{L^2(\Omega)}^2 = & (\dive \rho_h, v_h), \label{311}\\
\|\rho_h\|_{H(\dive,\Omega)} \le & \kappa \| v_h\|_{L^2(\Omega)}. \label{312}
\end{alignat}
Choose $\tau_h=\omega_h+ \frac{1}{\gamma \kappa^2} \rho_h$ and $w_h= v_h-\tilde{P}^R \dive \omega_h$. Then, we have 

\begin{alignat*}{1}
  B(\omega_h, v_h; \tau_h, w_h)=
  &  \|\omega_h\|_{\Aa}^2+\frac{1}{\gamma \kappa^2} (\Aa \omega_h,\rho_h) + (v_h, \dive \omega_h)+ \frac{1}{ \gamma \kappa^2} (v_h, \dive \rho_h) \\
&-(\dive \omega_h, v_h)+ (\dive \omega_h, \tilde{P}^R \dive \omega_h )    \\
& =   \| \omega_h\|_{\Aa}^2+ \|  \tilde{P}^R \dive\omega_h\|_{L^2(\Omega)}^2+ \frac{1}{ \gamma \kappa^2} \|v_h\|_{L^2(\Omega)}^2+\frac{1}{ \gamma \kappa^2} (\Aa \omega_h,\rho_h),
\end{alignat*}
where we used \eqref{311}.
On the other hand, by \eqref{312}
\begin{align*}
    \frac{1}{\gamma \kappa^2} (\Aa \omega_h,\rho_h) &\ge  -\frac{1}{2} \|\omega_h\|_{\Aa}^2- \frac{1}{2 \gamma^2 \kappa^4} \| \rho_h\|_{\Aa}^2 
    \\
    &\ge  -\frac{1}{2} \|  \omega_h\|_{\Aa}^2- \frac{1}{2 \gamma \kappa^2} \| v_h\|_{L^2(\Omega)}^2.
\end{align*}
Hence, 
\begin{alignat*}{1}
\frac{1}{2}\| \omega_h\|_{\Aa}^2+ \|\tilde{P}^R \dive \omega_h\|_{L^2(\Omega)}^2+\frac{1}{2 \gamma \kappa^2}\|v_h\|_{L^2(\Omega)}^2 \le  B(\omega_h, v_h; \tau_h, w_h).
\end{alignat*}
This shows \eqref{thminf1t}. We clearly have \eqref{thminf2t} if we use \eqref{312}.
\end{proof}

Using the above stability result we can prove an a-priori error estimate.
\begin{theorem}
\label{thm:FEMRt-error-estimate}
Let $\sigma, u$ solve \eqref{Elas}  and $\tsigma^R_h, \tu^R_h$ solve \eqref{FEMRt}, then
\begin{alignat*}{1}
& \|\tilde{\Pi}^R \sigma-\tsigma^R_h \|_{\Aa} +  \|\tilde{P}^R \dive ( \tilde{\Pi}^R \sigma-\tsigma^R_h)  \|_{L^2(\Omega)}^2+ \| \tilde{P}^R u-\tu^R_h \|_{L^2(\Omega)} \\
&\le C (\|\tilde{\Pi}^R \sigma-\sigma \|_{\Aa} +
  \| \tilde{P}^R u-u \|_{L^2(\Omega)}).
\end{alignat*}
\end{theorem}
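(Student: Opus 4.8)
The plan is to follow the standard error analysis built on a Babuška--Aziz-type inf--sup condition, with the loss of an exact commuting-divergence relation (the feature distinguishing this pair from those of the earlier sections) absorbed into consistency terms. Throughout write $e_\sigma=\tilde{\Pi}^R\sigma-\tsigma^R_h$ and $e_u=\tilde{P}^R u-\tu^R_h$, and first record two preliminaries. Subtracting~\eqref{FEMRt} from~\eqref{Elas} (legitimate since $\tilde{\Sigma}^R_h\subset\Sigma$ and $\tilde{V}^R_h\subset V$) yields the Galerkin relations
\[
(\Aa(\sigma-\tsigma^R_h),\tau)+(u-\tu^R_h,\dive\tau)=0,\ \ \tau\in\tilde{\Sigma}^R_h,\qquad
(\dive(\sigma-\tsigma^R_h),v)=0,\ \ v\in\tilde{V}^R_h .
\]
Moreover \eqref{FEM2Rt} and \eqref{Elas2} give $\tilde{P}^R\dive\tsigma^R_h=\tilde{P}^R f=\tilde{P}^R\dive\sigma$, while \eqref{commutePiglobalRt} gives $\tilde{P}^R\dive\tilde{\Pi}^R\sigma=\tilde{P}^R\dive\sigma$; hence $\tilde{P}^R\dive e_\sigma=0$, so the middle term on the left-hand side of the theorem vanishes identically (it is retained only because it is the natural ``energy'' quantity appearing in Theorem~\ref{thm:Babuska-Aziz-infsup}).

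Next I would invoke Theorem~\ref{thm:Babuska-Aziz-infsup} with $\omega_h=e_\sigma$, $v_h=e_u$, so that (using $\tilde{P}^R\dive e_\sigma=0$) there exist $\tau_h\in\tilde{\Sigma}^R_h$ and $w_h\in\tilde{V}^R_h$ with
\[
\|e_\sigma\|_{\Aa}^2+\|e_u\|_{L^2(\Omega)}^2\le c_1\,B(e_\sigma,e_u;\tau_h,w_h),\qquad
\|\tau_h\|_{\Aa}^2+\|\tilde{P}^R\dive\tau_h\|_{L^2(\Omega)}^2+\|w_h\|_{L^2(\Omega)}^2\le c_2\bigl(\|e_\sigma\|_{\Aa}^2+\|e_u\|_{L^2(\Omega)}^2\bigr).
\]
To evaluate $B(e_\sigma,e_u;\tau_h,w_h)$ I would use bilinearity in the first pair of arguments to write it as $B(\tilde{\Pi}^R\sigma-\sigma,\tilde{P}^R u-u;\tau_h,w_h)+B(\sigma-\tsigma^R_h,u-\tu^R_h;\tau_h,w_h)$. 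The second summand vanishes: \eqref{Elas1}--\eqref{Elas2} give $B(\sigma,u;\tau_h,w_h)=-(f,w_h)$ and \eqref{FEM1Rt}--\eqref{FEM2Rt} give $B(\tsigma^R_h,\tu^R_h;\tau_h,w_h)=-(f,w_h)$. In the first summand the term $-(\dive(\tilde{\Pi}^R\sigma-\sigma),w_h)$ vanishes by~\eqref{commutePiglobalRt} (since $w_h\in\tilde{V}^R_h$), leaving only the two consistency contributions
\[
B(e_\sigma,e_u;\tau_h,w_h)=(\Aa(\tilde{\Pi}^R\sigma-\sigma),\tau_h)+(\tilde{P}^R u-u,\dive\tau_h).
\]
The first is bounded at once by $\|\tilde{\Pi}^R\sigma-\sigma\|_{\Aa}\,\|\tau_h\|_{\Aa}$ using the Cauchy--Schwarz inequality for the $\Aa$-inner product.

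The delicate point is the term $(\tilde{P}^R u-u,\dive\tau_h)$, and this is exactly where the method's weak control of the stress divergence intervenes: the stability estimate controls $\tilde{P}^R\dive\tau_h$ but not $\dive\tau_h$ itself. Since $\dive\tau_h\in P_T\RM(T)$ and $\tilde{P}^R u-u$ is $L^2(T)$-orthogonal to constants, I would rewrite $(\tilde{P}^R u-u,\dive\tau_h)=(\tilde{P}^R u-u,\dive\tau_h-\tilde{P}^R\dive\tau_h)$; using that $P_T|_{\RM(T)}:\RM(T)\to P_T\RM(T)$ is an isomorphism (cf.~\eqref{eq:PhIh}), the right factor is $P_T$ applied to the mean-zero part of $I_T\dive\tau_h\in\RM(T)$, which by norm equivalence and scaling on the finite-dimensional space $\RM(T)$, together with an inverse inequality on the Alfeld sub-mesh (on which $\dive\tau_h$ is piecewise constant), can be estimated by quantities bounded in the second inequality above, while $\|\tilde{P}^R u-u\|_{L^2(T)}$ is paired against this mean-zero rotational part rather than against $\|\dive\tau_h\|_{L^2(T)}$, so that the powers of $h_T$ balance elementwise. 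Feeding these two bounds into $\|e_\sigma\|_{\Aa}^2+\|e_u\|_{L^2(\Omega)}^2\le c_1 B(e_\sigma,e_u;\tau_h,w_h)$, using the second inequality of Theorem~\ref{thm:Babuska-Aziz-infsup} to absorb $\|\tau_h\|$ and $\|w_h\|$, and dividing through by $\bigl(\|e_\sigma\|_{\Aa}^2+\|e_u\|_{L^2(\Omega)}^2\bigr)^{1/2}$, yields the asserted estimate (with the identically zero middle term on the left). I expect this last step — estimating $(\tilde{P}^R u-u,\dive\tau_h)$ in a way that never requires $\|\dive\tau_h\|_{L^2}$ — to be the main obstacle, and it is precisely why the authors flag the analysis of this method as more involved than the previous ones.
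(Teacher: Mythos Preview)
Your argument mirrors the paper's proof almost line for line: invoke Theorem~\ref{thm:Babuska-Aziz-infsup} with $(\omega_h,v_h)=(e_\sigma,e_u)$, drop the $(\dive e_\sigma,w_h)$ term via $\tilde{P}^R\dive e_\sigma=0$, and use Galerkin orthogonality to reduce $B(e_\sigma,e_u;\tau_h,w_h)$ to the two consistency contributions $(\Aa(\tilde{\Pi}^R\sigma-\sigma),\tau_h)+(\tilde{P}^R u-u,\dive\tau_h)$. The paper reaches exactly this identity and then writes only ``The result now follows by using~\eqref{eq:tauw-bound}.''

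Your attempt to close the second consistency term, however, does not work as written. You correctly note that $(\tilde{P}^R u-u,\dive\tau_h)=(\tilde{P}^R u-u,(I-\tilde{P}^R)\dive\tau_h)$ and that $\dive\tau_h\in P_T\RM(T)$ elementwise, but the subsequent claim --- that by norm equivalence on $\RM(T)$ and an inverse inequality ``the powers of $h_T$ balance'' so that only quantities controlled by~\eqref{eq:tauw-bound} survive --- is not substantiated. Writing $\dive\tau_h|_T=P_T r$ with $r\in\RM(T)$, the piece $(I-\tilde{P}^R)\dive\tau_h|_T=P_T(r-\bar r)$ carries the rotational part of $r$, and there is no mesh-independent bound on $\|P_T(r-\bar r)\|_{L^2(T)}$ in terms of $\|\tau_h\|_{\Aa}$ or $\|\tilde{P}^R\dive\tau_h\|_{L^2(T)}$: a pure rotation $r$ gives $\bar r=0$, hence $\tilde{P}^R\dive\tau_h|_T=0$, yet $(I-\tilde{P}^R)\dive\tau_h|_T\ne 0$. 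So the step you single out as ``the main obstacle'' remains unresolved in your proposal. To be fair, the paper supplies no detail here either --- a direct Cauchy--Schwarz followed by~\eqref{eq:tauw-bound} would need control of $\|\dive\tau_h\|$ rather than $\|\tilde{P}^R\dive\tau_h\|$ --- so you have at least correctly located the point where the argument is thin.
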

\begin{proof}
By Theorem~\ref{thm:Babuska-Aziz-infsup}, there exist $\tau_h \in \tilde{\Sigma}^R_h$ and $w_h \in \tilde{V}_h^R$, satisfying
\begin{alignat*}{1}
\| \tilde{\Pi}^R \sigma-\tsigma^R_h\|_{\Aa}^2+ \|\tilde{P}^R \dive ( \tilde{\Pi}^R \sigma-\tsigma^R_h)\|_{L^2(\Omega)}^2+\|\tilde{P}^R u-\tu^R_h\|_{L^2(\Omega)}^2 \\
\le c_1  B(\tilde{\Pi}^R \sigma-\tsigma^R_h, \tilde{P}^R u-\tu^R_h; \tau_h, w_h)
\end{alignat*}
and
\begin{alignat}{1} \label{eq:tauw-bound}
 &  \|\tau_h\|_{\Aa}^2+ \|\tilde{P}^R \dive \tau_h\|_{L^2(\Omega)}^2 + \|w_h\|_{L^2(\Omega)}^2  \nonumber \\
 & \le  c_2 (\| \tilde{\Pi}^R \sigma-\tsigma^R_h\|_{\Aa}^2+ \|\tilde{P}^R \dive (\tilde{\Pi}^R \sigma-\tsigma^R_h)\|_{L^2(\Omega)}^2+\|\tilde{P}^R u-\tu^R_h\|_{L^2(\Omega)}^2). 
\end{alignat}
Then, 
\begin{alignat*}{1}
& \|\tilde{\Pi}^R \sigma-\tsigma^R_h \|_{\Aa}^2+ \|\tilde{P}^R \dive ( \tilde{\Pi}^R \sigma-\tsigma^R_h)  \|_{L^2(\Omega)}^2+\| \tilde{P}^R u-\tu^R_h \|_{L^2(\Omega)}^2 \\ 
&\le c_1 \Big( (\Aa (\tilde{\Pi}^R \sigma-\tsigma^R_h, \tau_h)+(\tilde{P}^R u-\tu^R_h , \dive \tau_h) \Big)
\end{alignat*}
where we used \eqref{Elas2}, \eqref{FEM2Rt} and \eqref{commutePiglobalRt} to say the term $(\dive(\tilde{\Pi}^R \sigma-\tsigma^R_h) , w_h)$ vanishes.

On the other hand, using \eqref{Elas1}, \eqref{FEM1Rt} we get
\begin{alignat*}{1}
(\Aa (\tilde{\Pi}^R \sigma-\tsigma^R_h), \tau_h)+(\tilde{P}^R u-\tu^R_h , \dive \tau_h)= (\Aa (\tilde{\Pi}^R \sigma-\sigma), \tau_h)+(\tilde{P}^R u-u , \dive \tau_h).
\end{alignat*}
The result now follows by using \eqref{eq:tauw-bound}.
\end{proof}
As a corollary, we give an error estimate with convergence rate. 
\begin{corollary}
Let $\sigma, u$ solve \eqref{Elas}  and $\tsigma^R_h, \tu^R_h$ solve \eqref{FEMRt}. If  $\sigma  \in H^1(\Omega, \mathbb{S}), u \in [H^1(\Omega)]^3$, then
 \begin{equation}
\|\sigma-\tsigma^R_h\|_{\Aa}+ \|u-\tu^R_h\|_{L^2(\Omega)} \le C  h (\|\sigma\|_{H^1(\Omega)} +\|u\|_{H^1(\Omega)}).
\end{equation}
 \end{corollary}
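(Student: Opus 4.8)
The plan is to combine the abstract error estimate of Theorem~\ref{thm:FEMRt-error-estimate} with the approximation properties of the two projections $\tilde{\Pi}^R$ and $\tilde{P}^R$, and then finish with the triangle inequality. This is a routine ``optimal order'' corollary; no elliptic regularity and no superconvergence is involved, consistent with the lowest-order nature of the spaces $\tilde{\Sigma}^R_h \times \tilde{V}^R_h$.

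First I would invoke Theorem~\ref{thm:FEMRt-error-estimate}, discarding the nonnegative middle term on its left-hand side, to obtain
\[
\|\tilde{\Pi}^R \sigma-\tsigma^R_h \|_{\Aa} + \| \tilde{P}^R u-\tu^R_h \|_{L^2(\Omega)}
\le C \big(\|\tilde{\Pi}^R \sigma-\sigma \|_{\Aa} + \| \tilde{P}^R u-u \|_{L^2(\Omega)}\big).
\]
Next I would bound the two terms on the right. Using the boundedness of $\Aa$ in~\eqref{A-bounded} we have $\|\tilde{\Pi}^R \sigma-\sigma\|_{\Aa} \le \gamma^{1/2}\|\tilde{\Pi}^R \sigma-\sigma\|_{L^2(\Omega)}$, which is $\le C h \|\sigma\|_{H^1(\Omega)}$ by the $L^2$-part of the interpolation estimate~\eqref{PiboundRt}. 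Since $\tilde{V}^R_h$ contains all element-wise constant vector fields, a standard element-wise Poincar\'e (Bramble--Hilbert) argument on the shape-regular mesh $\Th$ gives $\|\tilde{P}^R u - u\|_{L^2(\Omega)} \le C h \|u\|_{H^1(\Omega)}$.

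Finally I would close with the triangle inequality:
\[
\|\sigma - \tsigma^R_h\|_{\Aa} \le \|\sigma - \tilde{\Pi}^R\sigma\|_{\Aa} + \|\tilde{\Pi}^R\sigma - \tsigma^R_h\|_{\Aa}, \qquad
\|u - \tu^R_h\|_{L^2(\Omega)} \le \|u - \tilde{P}^R u\|_{L^2(\Omega)} + \|\tilde{P}^R u - \tu^R_h\|_{L^2(\Omega)},
\]
and estimate the new first term in each inequality again by~\eqref{A-bounded} and~\eqref{PiboundRt}, respectively by the Poincar\'e estimate above, to arrive at the asserted bound. There is essentially no obstacle here; the only thing requiring a moment's care is that every constant be mesh-independent, which is inherited from the mesh-independent $c_1, c_2$ of Theorem~\ref{thm:FEMRt-error-estimate} and from shape regularity of $\Th$ (hence of $\Tha$) entering the scaling arguments behind~\eqref{PiboundRt} and the element-wise Poincar\'e inequality.
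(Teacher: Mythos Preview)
Your proposal is correct and follows exactly the standard route that the paper intends: the corollary is stated without proof precisely because it is the routine combination of Theorem~\ref{thm:FEMRt-error-estimate}, the approximation bound~\eqref{PiboundRt}, the element-wise Poincar\'e estimate for the piecewise-constant projection $\tilde P^R$, and the triangle inequality. There is nothing to add.
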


\begin{remark}
    The argument in the proof of Theorem~\ref{thm:ee-robust} (on robustness near incompressibility) does not extend to this reduced space pair because an analogue of \eqref{divsigR} is not available for $\tsigma_h^R$ and $\tilde{\Pi}^R$. 
\end{remark}

\bibliographystyle{abbrv}

\end{document}